\newtheorem{thm}{Theorem}[section]
\newtheorem{lemma}[thm]{Lemma}
\newtheorem{prop}[thm]{Proposition}
\newtheorem{defn}[thm]{Definition}
\newtheorem{cor}[thm]{Corollary}
\newtheorem{conj}[thm]{Conjecture}
\theoremstyle{remark}
\newtheorem{remark}[thm]{Remark}
\newtheorem{ex}[thm]{Example}
\newcommand{\wt}{\widetilde}
\newcommand{\wh}{\widehat}
\newcommand{\cA}{\mathcal{A}}
\newcommand{\cE}{\mathcal{E}}
\newcommand{\rk}{\operatorname{rk}}
\newcommand{\Add}{\operatorname{Add}}
\newcommand{\LHS}{\operatorname{LHS}}
\newcommand{\RHS}{\operatorname{RHS}}
\newcommand{\boat}[1][]{%
\def \a {0.84};
\draw[fill = white] ({22*\a/405},{61*\a/405}) -- ({92*\a/405},{-32*\a/405}) -- ({312*\a/405},{-32*\a/405}) -- ({382*\a/405},{61*\a/405}) -- ({22*\a/405},{61*\a/405});
\draw[thick] ({202*\a/405}, {61*\a/405}) -- ({202*\a/405},{373*\a/405});
\draw[fill = white] ({202*\a/405},{373*\a/405}) -- ({362*\a/405},{97*\a/405}) -- ({202*\a/405},{97*\a/405});
}
\title{Reconstructing Partitions from their Multisets of $k$-Minors}
\author{Pakawut Jiradilok}\address{Department of Mathematics, Harvard University, One Oxford Street, Cambridge, MA 02138}\email{pjiradilok@college.harvard.edu}
\begin{document}

\begin{abstract}
For non-negative integers $n$ and $k$ with $n \ge k$, a {\em $k$-minor} of a partition $\lambda = [\lambda_1, \lambda_2, \dots]$ of $n$ is a partition $\mu = [\mu_1, \mu_2, \dots]$ of $n-k$ such that $\mu_i \le \lambda_i$ for all $i$. The multiset $\wh{M}_k(\lambda)$ of $k$-minors of $\lambda$ is defined as the multiset of $k$-minors $\mu$ with multiplicity of $\mu$ equal to the number of standard Young tableaux of skew shape $\lambda / \mu$. We show that there exists a function $G(n)$ such that the partitions of $n$ can be reconstructed from their multisets of $k$-minors if and only if $k \le G(n)$. Furthermore, we prove that $\lim_{n \rightarrow \infty} G(n)/n = 1$ with $n-G(n) = O(n/\log n)$. As a direct consequence of this result, the irreducible representations of the symmetric group $S_n$ can be reconstructed from their restrictions to $S_{n-k}$ if and only if $k \le G(n)$ for the same function $G(n)$. For a minor $\mu$ of the partition $\lambda$, we study the excitation factor $E_\mu (\lambda)$, which appears as a crucial part in Naruse's Skew-Shape Hook Length Formula. We observe that certain excitation factors of $\lambda$ can be expressed as a $\mathbb{Q}[k]$-linear combination of the elementary symmetric polynomials of the hook lengths in the first row of $\lambda$ where $k = \lambda_1$ is the number of cells in the first row of $\lambda$.
\end{abstract}

\maketitle

\tableofcontents

\section{Introduction}
Given non-negative integers $n$ and $k$ with $n \ge k$, a {\em $k$-minor} of a partition $\lambda = [\lambda_1, \lambda_2, \dots]$ of $n$ is a partition $\mu = [\mu_1, \mu_2, \dots]$ of $n-k$ such that $\mu_i \le \lambda_i$ for all $i$. For each partition $\lambda$ of $n$, the multiset $\wh{M}_k(\lambda)$ of $k$-minors of $\lambda$ is defined as the multiset of $k$-minors $\mu$ of $\lambda$ with multiplicity of $\mu$ equal to the number $N(\lambda/\mu)$ of standard Young tableaux of skew shape $\lambda / \mu$. This paper considers the question of whether all the partitions of $n$ can be reconstructed from their multisets of $k$-minors for each given $(n,k)$. When the reconstruction is possible, we shall say that multiset-reconstructibility (MRC) holds for the pair $(n,k)$. We prove that there exists a function $G(n)$ such that MRC holds for $(n,k)$ if and only if $k \le G(n)$. In Theorem \ref{thm:Gnsimn}, we show that 
\[
\lim_{n \rightarrow \infty} \frac{G(n)}{n} = 1
\]
with $n-G(n) = O(n/\log n)$.

This partition multiset-reconstruction problem is a natural variant to the partition reconstruction problem. Instead of the multiset $\wh{M}_k(\lambda)$, if we ignore the multiplicities, we obtain the {\em set of $k$-minors} $M_k(\lambda)$ of $\lambda$. The partition reconstruction problem asks for which $(n,k)$ the partitions of $n$ can be reconstructed from their sets of $k$-minors. For such a pair $(n,k)$, we say that reconstructibility (RC) holds. This problem was studied by Pretzel and Siemons \cite{PS05}, Vatter \cite{Vat08}, and Monks \cite{Mo09}. In 2009, Monks gave an explicit solution to this problem: RC holds for $(n,k)$ if and only if $k \le g(n)$ for an explicitly described number-theoretic function $g(n)$ with 
\[
\sqrt{n+2} - 2 \le g(n) \le \sqrt{n+2} + 3 \sqrt[4]{n+2}.
\]
Observe that Monks' function $g(n)$ satisfies the asymptotic property $g(n) \sim \sqrt{n}$ as $n \rightarrow \infty$. For multiset-reconstructibility, our function $G(n)$ is analogous to Monks' function $g(n)$ in the sense that both are the threshold values of $k$ for which reconstructibility holds. Nevertheless, by adding the data of multiplicities to $M_k(\lambda)$, we find that the analogous function $G(n)$ grows significantly faster than $g(n)$ does, as $G(n) \sim n$.

The partition multiset-reconstruction problem arises naturally in the representation theory of the symmetric group. Each partition $\lambda$ of $n$ corresponds to the irreducible representation $V_{\lambda}$ of the symmetric group $S_n$. (See for example, \cite{Ful97, JK81, Sa13, Stan99}.) The data of the multiset $\wh{M}_k(\lambda)$ is precisely the data of decomposition of the representation $V_{\lambda}$ when we restrict $S_n$ to the Young subgroup $S_{n-k} \times S_1 \times S_1 \times \dots \times S_1 \subseteq S_n$. By restricting $S_n$ to $S_{n-k}$, we obtain a representation $V_\lambda$ of $S_{n-k}$ from the representation $V_{\lambda}$ of $S_n$. This new representation is, in general, not irreducible. Repeated applications of the Littlewood-Richardson rule show that $V_\lambda$ decomposes as a direct sum of irreducible representations of $n-k$:
\[
V_{\lambda} \cong \bigoplus_{\mu \, \vdash (n-k)} V_{\mu}^{\oplus N(\lambda/\mu)}.
\]
From this point of view, the partition multiset-reconstruction problem asks whether the decomposition on the right hand side of the above equation is sufficient for recovering the original irreducible representation $V_{\lambda}$, for each $(n,k)$. Our result implies that the irreducible representations of $S_n$ can be reconstructed from their restrictions to $S_{n-k}$ if and only if $k \le G(n)$ for a function $G$ with $\lim_{n \rightarrow \infty} G(n)/n = 1$.

A convenient tool we use in our work is Naruse's Skew-Shaped Hook Length Formula from a work of Naruse's \cite{Na14} in 2014. Whereas the renowned Hook Length Formula expresses the number $N(\lambda)$ of Standard Young Tableaux (SYT) of straight shape $\lambda$ via the product of hook lengths in $\lambda$, Naruse's formula expresses the number $N(\lambda/\mu)$ of Standard Young Tableaux (SYT) of {\em skew} shape $\lambda / \mu$ via the sum of the products of hook lengths in what Naruse calls the {\em excited diagrams} of $\mu$ in $\lambda$. We call this sum the {\em excitation factor} of $\mu$ in $\lambda$ and show in Theorem \ref{thm:Qklincomb} that when $\mu = [m]$ is a partition of one part of size $m$, the excitation factor $E_{\mu}(\lambda)$ can be expressed as a $\mathbb{Q}[k]$-linear combination of elementary symmetric polynomials $\sigma_i(a_1, \dots, a_k)$, where $a_1, \dots, a_k$ are the hook lengths in the first row of $\lambda$. We also give an explicit formula for the coefficients of $\sigma_i$ in $E_{\mu}(\lambda)$. Interested readers can find more details about Naruse's Skew Shape Hook Length Formula in \cite{MPP15, Na14}.

The formula in Theorem \ref{thm:Qklincomb} allows us to prove Theorem \ref{thm:SONAR}. The latter theorem serves as a technique for recovering a partition, which we call the \enquote{SONAR} technique. Our main result, Theorem \ref{thm:Gnsimn}, is proved by using SONAR to give a lower bound to $G(n)$.

Section \ref{sec:n-G(n)} is devoted to explicit calculations and upper bounds for the function $G(n)$. In Propositions \ref{p:Gnisn-2} and \ref{p:Gnisn-3}, we prove that $n-G(n) = 2$ if and only if $2 \le n \le 11$ or $n = 13$, and that $n-G(n) = 3$ if and only if $n \in \{12,14,17,18,23\}$. We give some computational results for known values of $G(n)$.

In the final section, we suggest a few open questions and ideas related to multiset-reconstructibility of partitions which give directions for further investigation.

\bigskip

\section{Notations and Definitions}
This section is devoted to providing the basic notations and definitions we use throughout the paper. The set of all positive integers is denoted $\mathbb{Z}_{>0}$, while the set of all non-negative integers is denoted $\mathbb{Z}_{\ge 0}$. In this paper, the notation $\log$ denotes the natural logarithm function.

For each polynomial $P(X) \in \mathbb{C}[X]$ (or a generating function $P(X) \in \mathbb{C}[[X]]$) and an integer $i \in \mathbb{Z}_{\ge 0}$, we use the notation $[P(X)]_{X^i}$ to denote the coefficient of $X^i$ in $P(X)$.

\begin{defn}
A {\em partition} $\lambda$ of $n \in \mathbb{Z}_{\ge 0}$ is an array $\lambda = [\lambda_1, \lambda_2, \dots]$ of non-negative integers with $\lambda_1 \ge \lambda_2 \ge \cdots$ such that $\sum_{i=1}^\infty \lambda_i = n$. We call $n$ the {\em size} of $\lambda$ and write $|\lambda| = n$. We also use the notation $\lambda \vdash n$ to mean that $\lambda$ is a partition of $n$.
\end{defn}

We usually truncate the zeroes in the partition notation, so that the notation $[\lambda_1, \lambda_2, \dots, \lambda_m]$ refers to the partition $[\lambda_1, \lambda_2, \dots, \lambda_m, 0, 0, 0, \dots]$.

It is useful to think of partitions as Young diagrams. A {\em Young diagram} corresponding to the partition $\lambda \vdash n$ is a collection of $n$ cells, arranged in the left-justified manner, with exactly $\lambda_i$ cells on the $i$-th row. The number of cells in the $j$-th column is therefore $\#\{i: \lambda_i \ge j\}$.

\begin{defn}
Let $\lambda$ be a partition of a non-negative integer $n$. The {\em conjugate} $\lambda^t$ of $\lambda$ is a partition of $n$ whose number $(\lambda^t)_i$ of cells in the $i$-th row is the number of cells in the $i$-th column of $\lambda$, for every positive integer $i$.

A partition $\lambda$ is called {\em self-conjugate} if $\lambda = \lambda^t$.
\end{defn}

\begin{defn}
Let $n$ and $k$ be non-negative integers with $n \ge k$. Let $\lambda = [\lambda_1, \lambda_2, \dots]$ be a partition of $n$. A {\em $k$-minor} $\mu$ of $\lambda$ is a partition $\mu = [\mu_1, \mu_2, \dots]$ of the non-negative integer $n-k$ such that $\mu_i \le \lambda_i$ for all $i$. We use the notation $\mu \le \lambda$ to mean that $\mu$ is a minor of $\lambda$.
\end{defn}

For example, the partition $[4,1]$ is a $4$-minor of the partition $[4,3,2]$. We can see this relationship in Figure \ref{fig:41in432}. 

\begin{center}
\begin{figure}
\begin{tikzpicture}
\ytableausetup{notabloids}
\ytableausetup{mathmode, boxsize=2.0em}
\node (n) {\ytableausetup{nosmalltableaux}
\ytableausetup{notabloids}
\ydiagram[*(gray)]{4,1}
*[*(white)]{0,1+2,2}};
\end{tikzpicture}
\caption{The partition $[4,1]$ is a $4$-minor of the partition $[4,3,2]$.} \label{fig:41in432}
\end{figure}
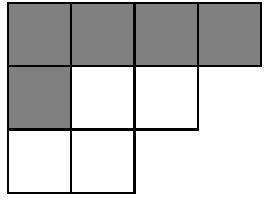
\end{center}

For a given partition $\lambda$ of a positive integer $n$, we write $\wh{M}_k(\lambda)$ to denote the multiset of $k$-minors of $\lambda$ with multiplicity of each $k$-minor $\mu$ equal to the number of standard Young tableaux of skew shape $\lambda / \mu$. 

For a partition $\lambda$, the number of standard Young tableaux of shape $\lambda$ is denoted by $N(\lambda)$. For partitions $\mu$ and $\lambda$ with $\mu \le \lambda$, the number of standard Young tableaux of skew shape $\lambda / \mu$ is denoted by $N(\lambda / \mu)$. By convention, if $\mu$ is not a minor of $\lambda$, then $N(\lambda/\mu) = 0$. Also, by convention, $N([0]) = 1$ where $[0]$ is the empty partition. For example, the $4$-minor $[4,1]$ in the multiset $\wh{M}_4([4,3,2])$ has multiplicity $5$, because there are exactly five standard Young tableaux of skew shape $[4,3,2]/[4,1]$ as shown in Figure \ref{fig:STY432/41}.

\begin{center}
\begin{figure}
\begin{tikzpicture}
\begin{scope}[shift = {(-5.0,0)}]
\ytableausetup{notabloids}
\ytableausetup{mathmode, boxsize=2.0em}
\node (n) {\ytableausetup{nosmalltableaux}
\ytableausetup{notabloids}
\ydiagram[*(gray)]{4,1}
*[*(white)]{0,1+2,2}};

\def \a {0.84};

\node at ({-4.5*\a +3.5},{0.5*\a-0.425}) {$1$};
\node at ({-3.5*\a +3.5},{0.5*\a-0.425}) {$2$};
\node at ({-5.5*\a +3.5},{-0.5*\a-0.425}) {$3$};
\node at ({-4.5*\a +3.5},{-0.5*\a-0.425}) {$4$};
\end{scope}

\begin{scope}[shift = {(0.0,0)}]
\ytableausetup{notabloids}
\ytableausetup{mathmode, boxsize=2.0em}
\node (n) {\ytableausetup{nosmalltableaux}
\ytableausetup{notabloids}
\ydiagram[*(gray)]{4,1}
*[*(white)]{0,1+2,2}};

\def \a {0.84};

\node at ({-4.5*\a +3.5},{0.5*\a-0.425}) {$1$};
\node at ({-3.5*\a +3.5},{0.5*\a-0.425}) {$3$};
\node at ({-5.5*\a +3.5},{-0.5*\a-0.425}) {$2$};
\node at ({-4.5*\a +3.5},{-0.5*\a-0.425}) {$4$};
\end{scope}

\begin{scope}[shift = {(+5.0,0)}]
\ytableausetup{notabloids}
\ytableausetup{mathmode, boxsize=2.0em}
\node (n) {\ytableausetup{nosmalltableaux}
\ytableausetup{notabloids}
\ydiagram[*(gray)]{4,1}
*[*(white)]{0,1+2,2}};

\def \a {0.84};

\node at ({-4.5*\a +3.5},{0.5*\a-0.425}) {$1$};
\node at ({-3.5*\a +3.5},{0.5*\a-0.425}) {$4$};
\node at ({-5.5*\a +3.5},{-0.5*\a-0.425}) {$2$};
\node at ({-4.5*\a +3.5},{-0.5*\a-0.425}) {$3$};
\end{scope}

\begin{scope}[shift = {(-5.0,-4.0)}]
\ytableausetup{notabloids}
\ytableausetup{mathmode, boxsize=2.0em}
\node (n) {\ytableausetup{nosmalltableaux}
\ytableausetup{notabloids}
\ydiagram[*(gray)]{4,1}
*[*(white)]{0,1+2,2}};

\def \a {0.84};

\node at ({-4.5*\a +3.5},{0.5*\a-0.425}) {$2$};
\node at ({-3.5*\a +3.5},{0.5*\a-0.425}) {$3$};
\node at ({-5.5*\a +3.5},{-0.5*\a-0.425}) {$1$};
\node at ({-4.5*\a +3.5},{-0.5*\a-0.425}) {$4$};
\end{scope}

\begin{scope}[shift = {(0.0,-4.0)}]
\ytableausetup{notabloids}
\ytableausetup{mathmode, boxsize=2.0em}
\node (n) {\ytableausetup{nosmalltableaux}
\ytableausetup{notabloids}
\ydiagram[*(gray)]{4,1}
*[*(white)]{0,1+2,2}};

\def \a {0.84};

\node at ({-4.5*\a +3.5},{0.5*\a-0.425}) {$2$};
\node at ({-3.5*\a +3.5},{0.5*\a-0.425}) {$4$};
\node at ({-5.5*\a +3.5},{-0.5*\a-0.425}) {$1$};
\node at ({-4.5*\a +3.5},{-0.5*\a-0.425}) {$3$};
\end{scope}
\end{tikzpicture}
\caption{The five standard Young tableaux of skew shape $[4,3,2]/[4,1]$.} \label{fig:STY432/41}
\end{figure}
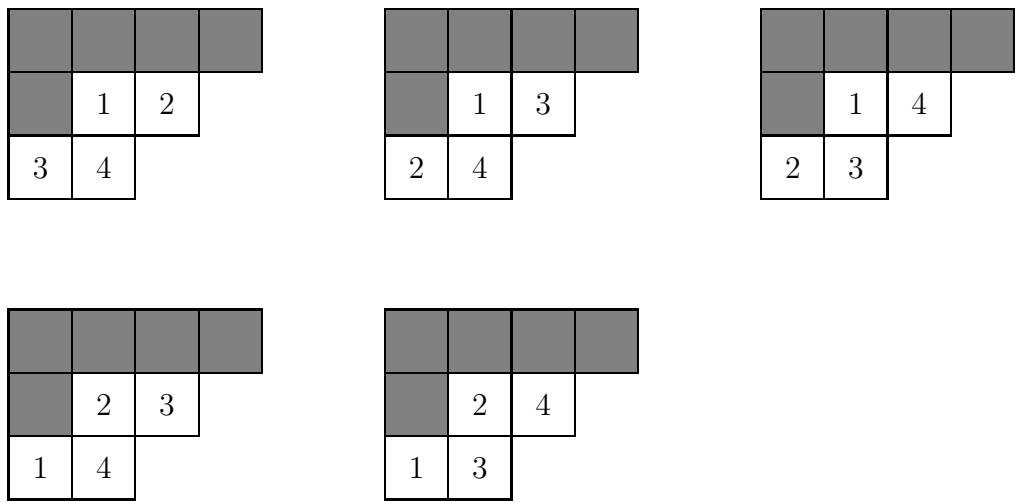
\end{center}

This paper focuses on the partition multiset-reconstructibility (MRC) Problem, which we state below.

\smallskip

\textbf{Multiset-Reconstructibility (MRC) Problem.} {\em 
For which pair $(n,k)$ of non-negative integers with $n \ge k$ does $\wh{M}_k(\mu) = \wh{M}_k(\nu)$ imply $\mu = \nu$ for all partitions $\mu$ and $\nu$ of $n$?}

\smallskip

When multiset-reconstructibility (MRC) is possible for $(n,k)$, we say that MRC holds for $(n,k)$.

Let $\mu$ and $\nu$ be partitions of a non-negative integer $n$. By definition, $\wh{M}_k(\mu) = \wh{M}_k(\nu)$ means $N(\mu/\tau) = N(\nu/\tau)$ for all partitions $\tau$ of $n-k$.

\begin{prop} \label{p:kthenk'}
Let $n \ge k' > k$ be non-negative integers. Let $\mu$ and $\nu$ be partitions of $n$ such that $\wh{M}_k(\mu) = \wh{M}_k(\nu)$. Then, $\wh{M}_{k'}(\mu) = \wh{M}_{k'}(\nu)$.
\end{prop}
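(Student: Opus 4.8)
The plan is to show that the $k$-minor data determines the $k'$-minor data whenever $k' > k$, essentially because standard Young tableaux of skew shape $\lambda/\tau$ with $|\tau| = n-k'$ can be obtained by first "peeling off" $k'-k$ cells to reach an intermediate shape $\sigma$ with $|\sigma| = n-k$, then counting SYT of $\sigma/\tau$. Concretely, I would first establish the branching-type identity
\[
N(\lambda/\tau) \;=\; \sum_{\substack{\sigma \,:\, \tau \le \sigma \le \lambda \\ |\sigma| = n-k}} N(\lambda/\sigma)\, N(\sigma/\tau),
\]
valid for all partitions $\lambda \vdash n$ and $\tau \vdash (n-k')$. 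This is the standard fact that a standard filling of $\lambda/\tau$ restricts, on its entries $\{1, 2, \dots, n-k\}$ that lie outside $\tau$... wait, more carefully: a SYT of $\lambda/\tau$ with $k'$ cells has its largest $k'-k$ entries forming a vertical-strip-free... no — its smallest $k$ entries occupy, together with $\tau$, an intermediate partition $\sigma$ with $\tau \le \sigma \le \lambda$ and $|\sigma| = n-k$; the filling restricted to $\sigma/\tau$ is a SYT of that skew shape, and the remaining entries (shifted down by $k$) give a SYT of $\lambda/\sigma$. Conversely any such pair glues back. This proves the displayed identity.

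Given the identity, the proposition is immediate: if $\wh{M}_k(\mu) = \wh{M}_k(\nu)$, then $N(\mu/\sigma) = N(\nu/\sigma)$ for every $\sigma \vdash (n-k)$. Fix any $\tau \vdash (n-k')$. Applying the identity to $\lambda = \mu$ and to $\lambda = \nu$ gives
\[
N(\mu/\tau) \;=\; \sum_{\substack{\sigma \vdash (n-k) \\ \tau \le \sigma}} N(\mu/\sigma)\, N(\sigma/\tau), \qquad
N(\nu/\tau) \;=\; \sum_{\substack{\sigma \vdash (n-k) \\ \tau \le \sigma}} N(\nu/\sigma)\, N(\sigma/\tau),
\]
and since the coefficients $N(\mu/\sigma)$ and $N(\nu/\sigma)$ agree term by term, the two sums are equal. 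Hence $N(\mu/\tau) = N(\nu/\tau)$ for all $\tau \vdash (n-k')$, which is exactly $\wh{M}_{k'}(\mu) = \wh{M}_{k'}(\nu)$.

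The only nontrivial step is the bijective argument for the branching identity, and even that is routine once one is careful about the convention $N(\lambda/\sigma) = 0$ when $\sigma \not\le \lambda$ (which lets us sum over all $\sigma \vdash (n-k)$ without the constraint $\sigma \le \lambda$) and the edge conventions $N([0]) = 1$. I expect no genuine obstacle here; alternatively, one can bypass the combinatorics entirely and invoke the representation-theoretic description from the introduction: $\wh{M}_k(\lambda)$ encodes $\mathrm{Res}^{S_n}_{S_{n-k}} V_\lambda$, and restricting further from $S_{n-k}$ to $S_{n-k'}$ is a well-defined functor, so equal restrictions to $S_{n-k}$ force equal restrictions to $S_{n-k'}$. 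I would present the combinatorial proof as the main argument and mention the representation-theoretic one as a remark.
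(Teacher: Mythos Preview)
Your argument is correct and matches the paper's approach: the paper carries out the special case $k' = k+1$ (writing $N(\mu/\sigma) = \sum_{\tau \in \Add(\sigma)} N(\mu/\tau)$, where $\Add(\sigma)$ denotes the partitions having $\sigma$ as a $1$-minor) and then iterates, which is exactly your branching identity unwound one step at a time. One small slip to fix in your write-up: in the bijective description, ``smallest $k$ entries'' should read ``smallest $k'-k$ entries'' so that $|\sigma| = |\tau| + (k'-k) = n-k$ as claimed.
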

\begin{proof}
For each partition $\pi$ of any positive integer, let $\Add(\pi)$ denote the set of all partitions which have $\pi$ as a $1$-minor. Let $\sigma$ be any partition of $n-k-1$. We then have
\[
N(\mu/ \sigma) = \sum_{\tau \in \Add(\sigma)} N(\mu/\tau) = \sum_{\tau \in \Add(\sigma)} N(\nu/\tau) = N(\nu/\sigma).
\]
Therefore, $\wh{M}_{k+1}(\mu) = \wh{M}_{k+1}(\nu)$ and by iteration we have $\wh{M}_{k'}(\mu) = \wh{M}_{k'}(\nu)$.
\end{proof}

The existence of the function $G(n)$ is justified by the following corollary of Proposition \ref{p:kthenk'}.

\begin{cor}
There exists a function $G(n)$ with the property that MRC holds for $(n,k)$ if and only if $k \le G(n)$.
\label{cor:Gexists}
\end{cor}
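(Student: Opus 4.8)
The statement to prove is Corollary~\ref{cor:Gexists}, which asserts the existence of a threshold function $G(n)$ such that MRC holds for $(n,k)$ if and only if $k \le G(n)$. The plan is to deduce this purely formally from Proposition~\ref{p:kthenk'}, which gives the key monotonicity: if $\wh{M}_k(\mu) = \wh{M}_k(\nu)$ then $\wh{M}_{k'}(\mu) = \wh{M}_{k'}(\nu)$ for all $k' \ge k$. In other words, the property "being confused at level $k$" propagates upward, so the property "MRC holds at level $k$" (i.e.\ no two distinct partitions of $n$ are confused) propagates \emph{downward}: if MRC fails for $(n,k)$, it fails for every $(n,k')$ with $k' \ge k$. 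Hence the set of $k \in \{0, 1, \dots, n\}$ for which MRC holds is a down-closed subset of $\{0,\dots,n\}$, i.e.\ an initial segment.

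The argument I would write is short. Fix $n$ and let $S_n = \{k \in \mathbb{Z}_{\ge 0} : 0 \le k \le n, \text{ MRC holds for } (n,k)\}$. First observe $S_n$ is nonempty: MRC holds trivially for $(n,0)$, since $\wh{M}_0(\mu)$ is the single partition $\mu$ with multiplicity $N(\mu/\mu) = 1$, so $\wh{M}_0(\mu) = \wh{M}_0(\nu)$ forces $\mu = \nu$. Next, I claim $S_n$ is an initial segment: suppose $k \in S_n$ and $0 \le k'' < k$; I must show $k'' \in S_n$. Take partitions $\mu, \nu$ of $n$ with $\wh{M}_{k''}(\mu) = \wh{M}_{k''}(\nu)$. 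Applying Proposition~\ref{p:kthenk'} with the roles $k \leftarrow k''$ and $k' \leftarrow k$ (valid since $n \ge k > k''$) yields $\wh{M}_k(\mu) = \wh{M}_k(\nu)$; since MRC holds for $(n,k)$, this gives $\mu = \nu$. Hence MRC holds for $(n,k'')$, so $k'' \in S_n$. Therefore $S_n = \{0, 1, \dots, G(n)\}$ where $G(n) := \max S_n$, and by construction MRC holds for $(n,k)$ if and only if $0 \le k \le G(n)$.

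There is no real obstacle here; the corollary is a formal consequence of the monotonicity already proved, and the only things to check are the nonemptiness of $S_n$ (trivial base case $k=0$) and the contrapositive reading of Proposition~\ref{p:kthenk'}. The one point worth stating carefully is the edge case $k' = k$ versus $k' > k$: Proposition~\ref{p:kthenk'} is stated for strict inequality $k' > k$, which is exactly what we need since we only invoke it when $k'' < k$ strictly; when $k'' = k$ there is nothing to prove. I would also remark, for completeness, that $G$ is well-defined because $S_n$ is a finite nonempty set of integers, so its maximum exists, and that $0 \le G(n) \le n$.

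Finally, I might add a sentence connecting back to the introduction: this corollary is what licenses all subsequent statements of the form "MRC holds for $(n,k)$ iff $k \le G(n)$", and the bulk of the paper is then devoted to estimating $G(n)$ from below (via the SONAR technique, Theorem~\ref{thm:SONAR}, leading to Theorem~\ref{thm:Gnsimn}) and from above (Section~\ref{sec:n-G(n)}).
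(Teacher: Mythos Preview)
Your proposal is correct and follows essentially the same approach as the paper: both arguments use Proposition~\ref{p:kthenk'} to show that the set of $k$ for which MRC holds is a nonempty initial segment of $\{0,1,\dots,n\}$ (the paper phrases it contrapositively as ``failure propagates upward'' while you phrase it as ``success propagates downward''), then define $G(n)$ as the maximum of that set, with the base case $k=0$ ensuring nonemptiness.
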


\begin{proof}
Let $n$, $k$, and $k'$ be non-negative integers with $n \ge k' > k$. Suppose that MRC fails for $(n,k)$. Then, there are two different partitions $\mu$ and $\nu$ of $n$ for which $\wh{M}_k(\mu) = \wh{M}_k(\nu)$. Proposition \ref{p:kthenk'} implies that MRC also fails for $(n,k')$ for every $n \ge k'>k$. 

MRC trivially holds when $k=0$ for every non-negative integer $n$. Therefore, we can define $G(n)$ to be the maximum integer $k \le n$ for which MRC holds for $(n,k)$. The argument above shows that MRC holds if and only if $k \le G(n)$.
\end{proof}

\bigskip

\section{Skew-Shape Hook Length Formula}
The famous hook length formula, which was discovered by Frame, Robinson, and Thrall \cite{FRT54} in 1954, states that, for every partition $\lambda$ of a non-negative integer $n$, the number of Standard Young Tableaux of shape $\lambda$ is
\[
N(\lambda) = \frac{n!}{H_{\lambda}}
\]
where $H_{\lambda}$ is the product of the hook lengths in $\lambda$.

In 2014, Naruse \cite{Na14} generalizes this formula to skew shapes. For partitions $\mu$ and $\lambda$ with $\mu \le \lambda$, Naruse's Skew-Shape Hook Length Formula expresses $N(\lambda/\mu)$ via the sum of the products of hook lengths in the excited diagrams of $\mu$ in $\lambda$. We will describe this formula in the following.

Let $\mu$ be a minor of $\lambda$. We use the matrix notation for labeling the cells, so that the cell $(i,j)$ in the Young diagram of $\lambda$ refers to the cell in the $i$-th row and the $j$-th column. Thus, the cells that appear in the Young diagram of $\lambda$ are labeled by $(i,j)$ such that $j \le \lambda_i$. Considering the Young diagram of $\mu$ inside the Young diagram of $\lambda$, we place a pebble in each of the cells inside the Young diagram of $\mu$. A {\em local excitation move} is defined as the operation of moving a pebble in the cell $(i,j)$ to $(i+1, j+1)$, if initially the three cells $(i,j+1)$, $(i+1,j)$, and $(i+1,j+1)$ are all unoccupied, and the cell $(i+1,j+1)$ is still inside $\lambda$. (See Figure \ref{fig:lem}.)

\begin{center}
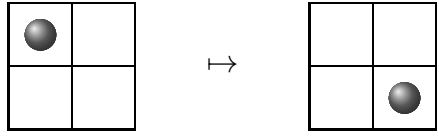
\begin{figure}
\begin{tikzpicture}
\begin{scope}[shift = {(-2.0,0)}]
\ytableausetup{notabloids}
\ytableausetup{mathmode, boxsize=2.0em}
\node (n) {\ytableausetup{nosmalltableaux}
\ytableausetup{notabloids}
\ydiagram[*(white)]{2,2}};

\def \a {0.84};
\def \r {\a/4};

\shade[shading=ball, ball color = gray] ({-4.5*\a +3.5},{1.0*\a-0.425}) circle (\r);
\end{scope}

\begin{scope}[shift = {(0,0)}]
\def \a {0.84};
\node at ({-4.0*\a +3.5},{0.5*\a-0.425}) {$\mapsto$};
\end{scope}

\begin{scope}[shift = {(+2.0,0)}]
\ytableausetup{notabloids}
\ytableausetup{mathmode, boxsize=2.0em}
\node (n) {\ytableausetup{nosmalltableaux}
\ytableausetup{notabloids}
\ydiagram[*(white)]{2,2}};

\def \a {0.84};
\def \r {\a/4};

\shade[shading=ball, ball color = gray] ({-3.5*\a +3.5},{0.0*\a-0.425}) circle (\r);
\end{scope}

\end{tikzpicture}
\caption{A local excitation move.} \label{fig:lem}
\end{figure}
\end{center}

An {\em excited diagram} of $\mu$ in $\lambda$ is defined to be the cells of the Young diagram of $\lambda$ that are occupied by the pebbles after a sequence of local excitation moves. The collection of all excited diagrams of $\mu$ in $\lambda$ is denoted $\cE_\mu(\lambda)$.

For each cell $(i,j)$ inside the Young diagram of $\lambda$, let $h_{i,j}$ denote the hook length of the cell $(i,j)$ inside $\lambda$. We define the {\em excitation factor} of $\mu$ in $\lambda$, denoted $E_\mu (\lambda)$, as
\[
E_\mu (\lambda) := \sum_{\varepsilon \in \cE_\mu(\lambda)} \prod_{(i,j) \in \varepsilon} h_{i,j}.
\]
In other words, $E_\mu (\lambda)$ is the sum of the products of hook lengths of cells in the excited diagrams of $\mu$ in $\lambda$. When $\mu = [k]$ is a partition of one part, we also write $E_k(\lambda)$ instead of $E_{[k]}(\lambda)$ for convenience.

\begin{ex}
Suppose that $\lambda = [4,3,3]$ and $\mu = [2]$. There are three excited diagrams of $\mu$ in $\lambda$, labeled as $\varepsilon_1$, $\varepsilon_2$, and $\varepsilon_3$ in Figure \ref{fig:exciteddiagrams2in433}. The products of hook lengths of cells in $\varepsilon_1$, $\varepsilon_2$, and $\varepsilon_3$ are $6 \times 5 = 30$, $6 \times 2 = 12$, and $3 \times 2 = 6$, respectively. Therefore,
\[
E_{2}([4,3,3]) = 30 + 12 + 6 = 48.
\]
\end{ex}

\begin{remark}
By convention, for every partition $\lambda$, we have $E_0(\lambda) = 1$. This is because $\cE_{[0]}(\lambda)$ has exactly one element, which is the empty excited diagram $\varnothing$ inside $\lambda$. The product of hook lengths in $\varnothing$ is an empty product, which is $1$, and therefore, $E_0(\lambda) = 1$.
\end{remark}

\begin{center}
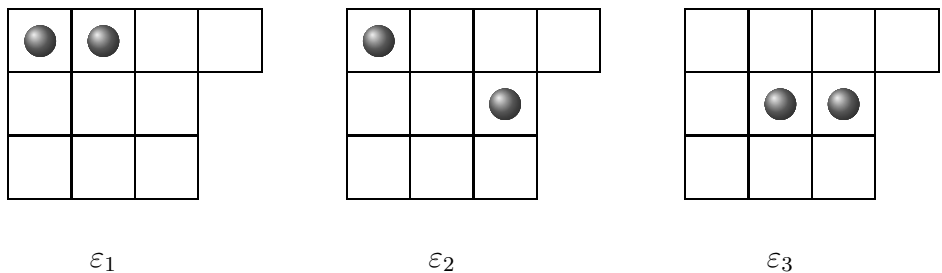
\begin{figure}
\begin{tikzpicture}
\begin{scope}[shift = {(-4.5,0)}]
\ytableausetup{notabloids}
\ytableausetup{mathmode, boxsize=2.0em}
\node (n) {\ytableausetup{nosmalltableaux}
\ytableausetup{notabloids}
\ydiagram[*(white)]{4,3,3}};

\def \a {0.84};
\def \r {\a/4};

\shade[shading=ball, ball color = gray] ({-5.5*\a +3.5},{1.5*\a-0.425}) circle (\r);
\shade[shading=ball, ball color = gray] ({-4.5*\a +3.5},{1.5*\a-0.425}) circle (\r);

\node at ({-4.5*\a +3.5},{-2.0*\a-0.425}) {$\varepsilon_1$};
\end{scope}

\begin{scope}[shift = {(0,0)}]
\ytableausetup{notabloids}
\ytableausetup{mathmode, boxsize=2.0em}
\node (n) {\ytableausetup{nosmalltableaux}
\ytableausetup{notabloids}
\ydiagram[*(white)]{4,3,3}};

\def \a {0.84};
\def \r {\a/4};

\shade[shading=ball, ball color = gray] ({-5.5*\a +3.5},{1.5*\a-0.425}) circle (\r);
\shade[shading=ball, ball color = gray] ({-3.5*\a +3.5},{0.5*\a-0.425}) circle (\r);

\node at ({-4.5*\a +3.5},{-2.0*\a-0.425}) {$\varepsilon_2$};
\end{scope}

\begin{scope}[shift = {(+4.5,0)}]
\ytableausetup{notabloids}
\ytableausetup{mathmode, boxsize=2.0em}
\node (n) {\ytableausetup{nosmalltableaux}
\ytableausetup{notabloids}
\ydiagram[*(white)]{4,3,3}};

\def \a {0.84};
\def \r {\a/4};

\shade[shading=ball, ball color = gray] ({-4.5*\a +3.5},{0.5*\a-0.425}) circle (\r);
\shade[shading=ball, ball color = gray] ({-3.5*\a +3.5},{0.5*\a-0.425}) circle (\r);

\node at ({-4.5*\a +3.5},{-2.0*\a-0.425}) {$\varepsilon_3$};
\end{scope}

\end{tikzpicture}
\caption{The three excited diagrams of $[2]$ in $[4,3,3]$} \label{fig:exciteddiagrams2in433}
\end{figure}
\end{center}

Having defined the excitation factors, we give Naruse's Skew-Shape Hook Length Formula as follows.

\begin{thm} {\em (Skew-Shape Hook Length Formula; Naruse \cite{Na14}, 2014)} 
\label{thm:NaruseSSHLF}
Let $m$ and $n$ be non-negative integers with $m \le n$. Let $\mu \vdash m$ and $\lambda \vdash n$ be partitions such that $\mu \le \lambda$. Then, the number of Standard Young Tableaux of skew shape $\lambda/\mu$ is given by
\[
N(\lambda/ \mu) = \frac{(n-m)!}{n!} \cdot N(\lambda) \cdot E_{\mu} (\lambda).
\]
\end{thm}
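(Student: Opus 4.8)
\emph{Proof proposal.} The plan is to reduce the formula to an identity purely about excitation factors, by inducting on the number $N := n-m$ of cells of the skew shape $\lambda/\mu$ and using the ordinary hook length formula $N(\nu) = |\nu|!/H_\nu$ together with the branching rule for standard Young tableaux. The base case is $\mu = \lambda$, where $N = 0$: the only excited diagram of $\lambda$ in $\lambda$ is $\lambda$ itself (no local excitation move is possible, since the cell directly below an occupied cell lies in $\lambda$ and hence is occupied whenever the target of a move does), so $E_\lambda(\lambda) = H_\lambda$ and $\tfrac{0!}{n!}\cdot\tfrac{n!}{H_\lambda}\cdot H_\lambda = 1 = N(\lambda/\lambda)$. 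For the inductive step, observe that the cell carrying the entry $1$ in a standard Young tableau of shape $\lambda/\mu$ is an addable corner $c$ of $\mu$ with $\mu\cup\{c\}\subseteq\lambda$, and deleting it yields
\[
N(\lambda/\mu) \;=\; \sum_c N\!\bigl(\lambda/(\mu\cup\{c\})\bigr),
\]
the sum over all such $c$. Each shape $\lambda/(\mu\cup\{c\})$ has $N-1$ cells; applying the inductive hypothesis and the hook length formula to each term and simplifying the factorials, the entire theorem reduces to the identity
\[
(n-m)\,E_\mu(\lambda) \;=\; \sum_c E_{\mu\cup\{c\}}(\lambda),\qquad(\star)
\]
where again $c$ ranges over the addable corners of $\mu$ that lie in $\lambda$, and where crucially the ambient shape $\lambda$, hence every hook length $h_{i,j}$, is held fixed.

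I expect $(\star)$ to be the main obstacle; given the inductive hypothesis it is in fact logically equivalent to the theorem, so it carries the full content of Naruse's formula. My approach to $(\star)$ would go through the description of excited diagrams as flagged tableaux of shape $\mu$ — the correspondence, due in this form to Kreiman and to Ikeda and Naruse and re-examined by Morales, Pak, and Panova \cite{MPP15}, which records, cell by cell, how far each pebble has slid down its diagonal. Because an extra pebble can only ever block moves, deleting the pebble that originates at $c$ carries an excited diagram of $\mu\cup\{c\}$ to an excited diagram of $\mu$; I would use this to rewrite $\sum_c E_{\mu\cup\{c\}}(\lambda)$ as $\sum_{\varepsilon\in\cE_\mu(\lambda)}\bigl(\prod_{(i,j)\in\varepsilon}h_{i,j}\bigr)\,W(\varepsilon)$, where $W(\varepsilon)$ is the total hook weight of the cells of $\lambda\setminus\varepsilon$ that can legally receive the deleted pebble. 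Identity $(\star)$ then comes down to showing $W(\varepsilon) = n-m = |\lambda\setminus\varepsilon|$ for every excited diagram $\varepsilon$; for $\mu = \varnothing$ this is exactly the principal-hook identity $\sum_i h_{i,i} = |\lambda|$, and I would expect the general case to follow by the analogous principal-hook bookkeeping applied to the ``skew interior'' $\lambda\setminus\varepsilon$. Pinning down precisely which cells are admissible — and verifying that deletion is a weight-preserving bijection onto pairs (excited diagram of $\mu$, admissible cell) — is the delicate part.

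Should a direct proof of $(\star)$ prove too intricate, two standard alternatives are available, each of which relocates rather than removes the difficulty. Naruse's original argument \cite{Na14} obtains the formula by specializing Ikeda and Naruse's localization formula for the restrictions of equivariant Schubert classes on a Grassmannian, in which excited diagrams index the localization terms; the price is the equivariant-cohomology setup and the identification of $\cE_\mu(\lambda)$ with the relevant subwords. Alternatively, following Morales, Pak, and Panova, one can begin from the Okounkov--Olshanski formula for $N(\lambda/\mu)$ as a sum over column-strict tableaux of shape $\mu$ — or from an identity for factorial Schur functions — and then produce a hook-length-preserving bijection between those tableaux and the excited diagrams. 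In every route the crux is the same: matching $\cE_\mu(\lambda)$ with a family of tableaux while tracking hook lengths, so I would invest the effort there, everything else (the reduction above, the base case, and the cancellations) being routine.
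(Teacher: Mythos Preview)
First, note that the paper does not itself prove this theorem: it is quoted from Naruse \cite{Na14} and used as a tool, so there is no paper proof to compare against. Your reduction to the identity $(\star)$ is correct, and you rightly identify $(\star)$ as equivalent to the full formula.

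The gap is in your proposed proof of $(\star)$: the claim that $W(\varepsilon) = n - m$ for every excited diagram $\varepsilon \in \cE_\mu(\lambda)$ is false. Take $\lambda = [2,2]$ and $\mu = [1]$, so $n - m = 3$. The excited diagrams of $\mu$ in $\lambda$ are $\varepsilon_1 = \{(1,1)\}$ and $\varepsilon_2 = \{(2,2)\}$. The addable corners of $\mu$ inside $\lambda$ are $(1,2)$ and $(2,1)$, and the only excited diagrams of $[2]$ and of $[1,1]$ in $\lambda$ are $\{(1,1),(1,2)\}$ and $\{(1,1),(2,1)\}$ respectively. Your deletion map sends both of these to $\varepsilon_1$, with deleted cells $(1,2)$ and $(2,1)$, so $W(\varepsilon_1) = h_{1,2} + h_{2,1} = 2+2 = 4$, while $\varepsilon_2$ is not in the image at all and $W(\varepsilon_2) = 0$; neither equals $3$. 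The identity $(\star)$ still holds here ($3 \cdot (3+1) = 12 = 3\cdot 4 + 1\cdot 0$), but not via constancy of $W$: the deletion map is not surjective onto $\cE_\mu(\lambda)$, and the fibers do not carry equal hook weight. So the ``principal-hook bookkeeping'' you invoke does not extend past $\mu = \varnothing$ in the way you suggest. The alternative routes you list (equivariant localization; the Morales--Pak--Panova passage through factorial Schur functions and flagged tableaux) are valid, but each brings in substantial machinery; a direct inductive proof of $(\star)$ along your lines would need a genuinely different redistribution of weights than the one you outline.
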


For the case in which $\mu = [m]$ is a partition with one part, the next corollary follows immediately from Naruse's formula.

\begin{cor}
Let $m$ and $n$ be non-negative integers with $m \le n$ and let $\lambda$ be a partition of $n$. Then,
\[
N(\lambda / [m]) = \frac{(n-m)!}{n!} \cdot N(\lambda) \cdot \underbrace{\sum_{i_1 \le \dots \le i_m} h_{i_1, i_1} \cdot h_{i_2, i_2+1} \cdot ~\cdots ~\cdot h_{i_m, i_m + m-1}}_{= E_m(\lambda)}
\]
where $h_{i,j}$ is the hook length of the cell $(i,j)$ in $\lambda$, and the sum calculates over all $(i_1, \dots, i_m)$ for which $(i_t, i_t+t-1)$ is in $\lambda$ for all $t = 1,2, \dots, m$. If $m$ is greater than $\lambda_1$, then by convention, $N(\lambda/ [m]) = 0$ and $E_m(\lambda) = 0$. \label{cor:EmFormula}
\end{cor}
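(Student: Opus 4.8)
The plan is to obtain the formula by specializing Naruse's Skew-Shape Hook Length Formula (Theorem~\ref{thm:NaruseSSHLF}) to the case $\mu=[m]$; once that substitution is made, the only thing left to prove is that the excitation factor $E_{[m]}(\lambda)=\sum_{\varepsilon\in\cE_{[m]}(\lambda)}\prod_{(i,j)\in\varepsilon}h_{i,j}$ equals the claimed sum $\sum_{i_1\le\cdots\le i_m}h_{i_1,i_1}h_{i_2,i_2+1}\cdots h_{i_m,i_m+m-1}$. So the heart of the argument is an explicit description of the excited diagrams of a single-row partition inside $\lambda$.

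First I would fix the combinatorics of local excitation moves on $\mu=[m]$. Label the pebbles $1,2,\dots,m$, with pebble $t$ starting in the cell $(1,t)$. Since a local excitation move sends $(i,j)$ to $(i+1,j+1)$, it preserves the quantity $j-i$ for the pebble being moved; consequently pebble $t$ always sits in a cell $(i_t,\,i_t+t-1)$ with $i_t\ge 1$, so an excited diagram of $[m]$ in $\lambda$ is precisely a set of the form $\{(i_t,\,i_t+t-1):1\le t\le m\}$ for a tuple $(i_1,\dots,i_m)$ of positive integers. Next I would show that the tuples that actually occur are exactly the weakly increasing ones, $i_1\le i_2\le\cdots\le i_m$, for which every cell $(i_t,\,i_t+t-1)$ lies in $\lambda$. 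That such a tuple must satisfy the containment condition is immediate, since excited diagrams never leave $\lambda$. Monotonicity is preserved by every legal move: if pebble $t$ is moved down from row $i_t$, the vacancy requirement on the cell $(i_t,\,i_t+t)$ (which lies on pebble $(t+1)$'s diagonal) forces $i_{t+1}\ne i_t$, hence $i_{t+1}\ge i_t+1$, so after the move pebble $t$ is at row $i_t+1\le i_{t+1}$; the requirement on $(i_t+1,\,i_t+t-1)$ (on pebble $(t-1)$'s diagonal) is compatible with $i_{t-1}\le i_t$. Since the initial tuple $(1,1,\dots,1)$ is weakly increasing, so is every reachable one.

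For the converse I would argue by induction on $w(\vec{\imath}):=\sum_{t=1}^m(i_t-1)$ that every weakly increasing admissible tuple $\vec{\imath}$ gives an excited diagram. When $w(\vec{\imath})=0$ the tuple is $(1,\dots,1)$, the starting configuration. When $w(\vec{\imath})>0$, let $t_0$ be the smallest index with $i_{t_0}\ge 2$; because $\vec{\imath}$ is weakly increasing we then have $i_{t_0-1}\le 1<i_{t_0}$ (unless $t_0=1$) and $i_{t_0+1}\ge i_{t_0}>i_{t_0}-1$ (unless $t_0=m$). One checks that the tuple $\vec{\imath}\,'$ obtained from $\vec{\imath}$ by replacing $i_{t_0}$ with $i_{t_0}-1$ is again weakly increasing and admissible (the cell it requires lies directly northwest of a cell of $\lambda$), and that in the configuration $\vec{\imath}\,'$ the three cells needed for the local excitation move that sends pebble $t_0$ back down to row $i_{t_0}$ are all vacant — precisely because of the two inequalities just noted. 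Since $w(\vec{\imath}\,')=w(\vec{\imath})-1$, the inductive hypothesis finishes this direction. Substituting $\varepsilon=\{(i_t,\,i_t+t-1):1\le t\le m\}$ into $E_{[m]}(\lambda)$ and then into Theorem~\ref{thm:NaruseSSHLF} yields the displayed identity. When $m>\lambda_1$ there is no admissible tuple (a cell $(i_{\lambda_1+1},\,i_{\lambda_1+1}+\lambda_1)$ can never fit in $\lambda$), so $\cE_{[m]}(\lambda)=\varnothing$, $E_m(\lambda)=0$, and $N(\lambda/[m])=0$ by convention, as claimed.

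I expect the main obstacle to be the converse (surjectivity) step: verifying that every weakly increasing admissible tuple is genuinely reachable requires choosing the order of moves so that the three-cell vacancy condition is met at each step, and the bookkeeping — identifying the pebble $t_0$ to move and checking that the two key inequalities survive — is the delicate part. Alternatively, one could invoke the known description of excited diagrams of $\mu$ in $\lambda$ via flagged tableaux, which for $\mu=[m]$ collapses to exactly these weakly increasing tuples; but the direct induction above keeps the argument self-contained.
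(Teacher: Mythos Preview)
Your proposal is correct and follows the same route the paper takes: the paper records this corollary as an immediate consequence of Theorem~\ref{thm:NaruseSSHLF} without further argument, so your proof simply supplies the explicit bijection between $\cE_{[m]}(\lambda)$ and weakly increasing admissible tuples $(i_1,\dots,i_m)$ that the paper leaves implicit. The monotonicity-preservation and inductive-reachability checks you carry out are accurate.
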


We note that even in the case as simple as $\mu = [m]$, the above formula for the excitation factor $E_m(\lambda)$ is complicated and depends heavily upon the shape of $\lambda$. In the next section, we will explore the excitation factor $E_m(\lambda)$ in more detail, and give an alternative expression for it.

\bigskip

\section{The SONAR Technique} \label{sec:SONAR}
We first consider the following question. Suppose that the partition $\lambda$ is initially unknown. If the excitation factors $E_m(\lambda)$ are known for all non-negative integers $m$, can we recover the partition $\lambda$? That is, can the sequence $\{E_m(\lambda)\}_{m=0}^{\infty}$ recover $\lambda$?

We refer to this method of recovery of the partition $\lambda$ as the \enquote{SONAR} technique. Suppose we imagine the partition $\lambda$ as an ocean whose exact shape we would like to determine. We can remove $m$ leftmost cells on the top row of $\lambda$ to obtain information of $E_m(\lambda)$ for all $m \ge 0$. Imagine this as a boat on the surface of the ocean $\lambda$ that can take out as many cells on the surface from the upper-left corner of $\lambda$ as it wishes, but cannot go deeper than that. (See Figure \ref{fig:SONAR}.) Can the boat retrieve the whole topography of the ocean $\lambda$? The name SONAR comes from the similarity of this technique to the actual SONAR technique in oceanography, by which a boat on the surface can retrieve the whole topography of the ocean without having to go deeper than the surface.

\begin{center}
\begin{figure}
\begin{tikzpicture}
\begin{scope}[shift = {(0,0)}]
\ytableausetup{notabloids}
\ytableausetup{mathmode, boxsize=2.0em}
\node (n) {\ytableausetup{nosmalltableaux}
\ytableausetup{notabloids}
\ydiagram[*(gray!20)]{4}
*[*(white)]{4+4,6,5,2,1,1}};

\def \a {0.84};
\def \r {\a/4};
\end{scope}

\def \a {0.84};

\begin{scope}[shift = {({-5*\a/6},{37*\a/12-32*\a/405})}]
\boat[]
\draw[->,thick] ({7*\a/6},{\a/3}) -- ({10*\a/6},{\a/3});
\end{scope}

\end{tikzpicture}
\caption{SONAR technique} \label{fig:SONAR}
\end{figure}
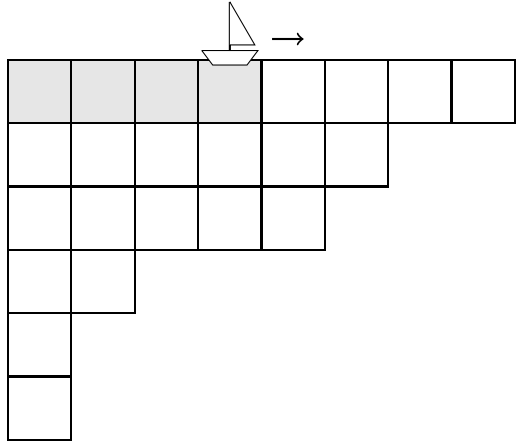
\end{center}

In this section, we prove that the SONAR technique always works. We state this as the following theorem.

\begin{restatable}[SONAR Technique]{thm}{SONAR} 
\label{thm:SONAR}
Let $\lambda$ be a partition. Then, the sequence $\{E_m(\lambda)\}_{m=0}^{\infty}$ uniquely determines $\lambda$. 
\end{restatable}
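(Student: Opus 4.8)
The plan is to recover the partition $\lambda$ row by row, extracting one new row at a time from a finite initial segment of the sequence $\{E_m(\lambda)\}_{m \ge 0}$. First I would observe that $E_m(\lambda) = 0$ precisely when $m > \lambda_1$ and $E_m(\lambda) > 0$ for all $0 \le m \le \lambda_1$ (every term in the sum of Corollary \ref{cor:EmFormula} is a positive product of hook lengths, and for $m \le \lambda_1$ the ``staircase'' cells $(1,1),(1,2),\dots,(1,m)$ of the first row already contribute one nonzero term). Hence $\lambda_1$ is determined as the largest $m$ with $E_m(\lambda) \ne 0$. The next task is to isolate the contribution of the first row to $E_m(\lambda)$: among all weakly increasing sequences $i_1 \le i_2 \le \cdots \le i_m$, the one with $i_1 = i_2 = \cdots = i_m = 1$ gives the product $a_1 a_2 \cdots a_m$ of the first-row hook lengths $a_j = h_{1,j} = \lambda_1 - j + (\lambda^t)_j$. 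I would argue that this term dominates in a way that lets us read off $a_1 a_2 \cdots a_m$ for every $m$, and therefore each individual hook length $a_j$, from which the conjugate column heights $(\lambda^t)_j = a_j - \lambda_1 + j$ are recovered --- but this gives only $\lambda_1$ pieces of column-length data, namely the lengths of the first $\lambda_1$ columns, which is exactly the whole partition since $\lambda$ has no columns beyond the $\lambda_1$-th. So in fact the first row's hook lengths already encode all of $\lambda$; the crux is showing $\{E_m(\lambda)\}$ lets us extract $a_1 \cdots a_m$.

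To make that extraction rigorous I would set up an induction that peels off rows. Assume inductively that rows $1,\dots,r-1$ of $\lambda$ are known; equivalently, we know the partition $\bar\lambda = [\lambda_r, \lambda_{r+1}, \dots]$ up to knowing its first part $\lambda_r$. Write $E_m(\lambda)$ as a sum over how many of the indices $i_1,\dots,i_m$ equal $1$ (say $t$ of them, corresponding to removing $t$ cells from the top row, which are the first $t$ cells with hook lengths $a_1,\dots,a_t$) times an excitation-type factor for the remaining shape. Concretely, each monomial in Corollary~\ref{cor:EmFormula} with $i_1 = \cdots = i_t = 1 < i_{t+1}$ factors as $(a_1 \cdots a_t)$ times a sum of products of hook lengths $h_{i,j}$ with $i \ge 2$; but those hook lengths depend on $\lambda$'s first row only through $\lambda_1$ (since $h_{i,j}$ for $i \ge 2$ counts cells to the right in row $i$ and below in column $j$, and a cell in row $i \ge 2$ lies below a first-row cell of the same column iff $j \le \lambda_1$, which is automatic as $j \le \lambda_i \le \lambda_1$). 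Thus once $\lambda_1$ is known, the ``$i \ge 2$ part'' of every term is a quantity attached to the partition $\lambda^{(2)} = [\lambda_2, \lambda_3, \dots]$ sitting inside a known ambient shape, and one extracts $\{E_m(\lambda^{(2)})\}$-like data by a deconvolution: knowing $a_1, \dots, a_{\lambda_1}$ and the $E_m(\lambda)$ for all $m$ determines, via the triangular relation above, the corresponding second-row-onward sums. Iterating recovers $\lambda_2$, then $\lambda_3$, and so on.

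The alternative, cleaner route --- which I would actually try first and expect to be shorter --- is to bypass the row-by-row peeling entirely: show directly that $E_m(\lambda)$ for $1 \le m \le \lambda_1$ together with $E_m(\lambda) = 0$ for $m > \lambda_1$ determines the multiset $\{a_1, \dots, a_{\lambda_1}\}$ of first-row hook lengths, hence $\lambda$. This is plausible because, as the abstract advertises and Theorem~\ref{thm:Qklincomb} will make precise, $E_m(\lambda)$ is a $\mathbb{Q}[\lambda_1]$-linear combination of the elementary symmetric polynomials $\sigma_i(a_1,\dots,a_{\lambda_1})$; if the change-of-basis matrix between $\{E_m\}_{m=0}^{\lambda_1}$ and $\{\sigma_i\}_{i=0}^{\lambda_1}$ is invertible (which it should be, being unitriangular up to nonzero scalars with $\sigma_m$ appearing in $E_m$ with a nonzero coefficient), then the $\sigma_i(a_1,\dots,a_{\lambda_1})$ are recovered, hence the multiset of the $a_j$, hence the column heights $(\lambda^t)_j$, hence $\lambda$. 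I expect the main obstacle to be precisely the invertibility/nonvanishing claim for the leading coefficients: one must verify that the coefficient of $\sigma_m$ in $E_m(\lambda)$ (equivalently, that the ``all-indices-in-row-one'' term is the unique source of the top-degree elementary symmetric polynomial) is a nonzero rational, independent of the deeper shape of $\lambda$. Granting Theorem~\ref{thm:Qklincomb}'s explicit coefficient formula, this should reduce to checking that a specific binomial-type expression is nonzero, and the proof concludes.
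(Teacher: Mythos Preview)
Your ``cleaner route'' is exactly the paper's proof: one first reads off $k=\lambda_1$ as the largest $m$ with $E_m(\lambda)\neq 0$, then invokes Theorem~\ref{thm:Qklincomb} to see that the transition matrix from $(\sigma_0,\dots,\sigma_k)$ to $(E_0(\lambda),\dots,E_k(\lambda))$ is lower-triangular with diagonal entries $\binom{k-m}{0}\,S_0(m-k-1)=1$, hence unitriangular and invertible, so the $\sigma_i(a_1,\dots,a_k)$ --- and with them the distinct first-row hook lengths $a_1>\cdots>a_k$, and hence all of $\lambda$ --- are recovered. You correctly anticipated both the mechanism and the one thing to check (nonvanishing of the diagonal coefficient).

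As a side remark, your row-by-row peeling sketch is shakier than the clean route: after splitting off the factor $a_1\cdots a_t$ from the terms with $i_1=\cdots=i_t=1<i_{t+1}$, the residual sum involves hook lengths $h^{\lambda}_{i,\,i+t}$ for $i\ge 2$, which (under the identification $h^{\lambda}_{i,j}=h^{\lambda^{(2)}}_{i-1,j}$) is an excitation-type sum in $\lambda^{(2)}$ with a column offset of $t+1$, not $E_{m-t}(\lambda^{(2)})$ itself; the ``deconvolution'' would therefore need more work than you indicate. But since you already flagged the direct route as preferable, this does not affect the correctness of the proposal.
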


Our proof of Theorem \ref{thm:SONAR} uses the following theorem.

\begin{restatable}{thm}{Qklincomb}
\label{thm:Qklincomb}
Let $\lambda$ be a partition of a positive integer $n$. Suppose that the hook lengths in the first row of $\lambda$ are $a_1 > a_2 > \dots > a_k$ where $k = \lambda_1$. Then, for every non-negative integer $m$, we have
\[
E_m(\lambda) = \sum_{i=0}^m \binom{k-i}{m-i} S_{m-i}(m-k-1) \cdot \sigma_i (a_1, a_2, \dots, a_k),
\]
where $\sigma_i(a_1, a_2, \dots, a_k) = \sum_{1 \le \iota_1 < \iota_2 < \dots < \iota_i \le k} a_{\iota_1} a_{\iota_2} \cdots a_{\iota_i}$ is the $i$-th elementary symmetric polynomial in the variables $a_1, \dots, a_k$, with $\sigma_0 = 1$ by convention, and $S_j$ is the $j$-th Stirling polynomial.
\end{restatable}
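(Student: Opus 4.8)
\textbf{Proof proposal for Theorem \ref{thm:Qklincomb}.}

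The plan is to start from the explicit combinatorial expression for $E_m(\lambda)$ given in Corollary \ref{cor:EmFormula} and re-sum it along the diagonal of excited diagrams. Recall that an excited diagram of $[m]$ in $\lambda$ is obtained by pushing the single pebble at $(1,1)$ down a diagonal: any such diagram occupies cells $(1,1),(1,2),\dots,(1,j),(2,j+1),(3,j+2),\dots$ for some split point. More precisely, parametrize an excited diagram by the index $t \in \{0,1,\dots,m\}$ at which the chain of cells leaves the first row: the cells in the first row are $(1,1),\dots,(1,m-t)$ with hook lengths $a_1,\dots,a_{m-t}$, and the remaining $t$ cells descend diagonally into rows $2,3,\dots,t+1$. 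Thus I would first rewrite
\[
E_m(\lambda) = \sum_{t=0}^{m} a_1 a_2 \cdots a_{m-t} \cdot D_t^{(m)}(\lambda),
\]
where $D_t^{(m)}(\lambda)$ collects the products of hook lengths of all length-$t$ descending diagonal chains starting in row $2$ that are compatible with some first-row prefix of length $m-t$. The first key step is to show that each factor $a_1 a_2 \cdots a_{m-t}$ contributes, after re-expansion, only elementary symmetric polynomials $\sigma_i(a_1,\dots,a_k)$ with $i \le m$, and to track precisely which ones: the subtlety is that $a_1\cdots a_{m-t}$ is \emph{not} itself symmetric, so the symmetrization must come from summing over which columns of the first row are "used" by the pebbles versus "swept past."

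The cleaner route, which I expect to be the real engine of the proof, is to argue by induction on $m$ using a recursion for $E_m(\lambda)$. Peeling off the leftmost cell $(1,1)$ of the first row gives a recursion relating $E_m(\lambda)$ to excitation factors of the partition $\lambda' = [\lambda_1 - 1, \lambda_2, \dots]$ (whose first-row hook lengths are related to $a_2, \dots, a_k$ by a uniform shift) together with lower excitation factors. Concretely, a pebble configuration for $[m]$ in $\lambda$ either leaves $(1,1)$ occupied — contributing $a_1$ times a configuration for $[m-1]$ in the sub-diagram — or vacates $(1,1)$, which forces the whole chain to start one column to the right. Translating this into the claimed closed form, the induction hypothesis supplies $E_{m-1}$ and $E_m$ for $\lambda'$ as $\mathbb{Q}[k]$-linear combinations of $\sigma_j(a_2,\dots,a_k)$, and one uses the identity $\sigma_i(a_1,\dots,a_k) = \sigma_i(a_2,\dots,a_k) + a_1 \sigma_{i-1}(a_2,\dots,a_k)$ to reassemble everything. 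The binomial coefficient $\binom{k-i}{m-i}$ should emerge from counting the ways to choose which $m-i$ of the remaining "unpebbled" hook positions in the first row are swept, and the Stirling polynomial $S_{m-i}(m-k-1)$ should emerge from summing the descending-diagonal hook-length products, since hook lengths along a diagonal in a single row decrease by consecutive integers and such sums are governed by Stirling numbers; the cleanest way to pin down $S_j$ is to verify the generating-function identity $\sum_{j} S_j(x)\frac{z^j}{j!}$ matches the generating function one obtains for the diagonal sums, or equivalently to check the two sides of the claimed formula agree as polynomials in the $a_i$ by comparing the coefficient of each monomial $\sigma_i$.

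The main obstacle I anticipate is the bookkeeping in the second step: correctly identifying that the descending-diagonal contribution $D_t^{(m)}$, after the symmetrization forced by summing over split points, produces exactly the coefficient $\binom{k-i}{m-i} S_{m-i}(m-k-1)$ and not some other polynomial in $k$. This requires a careful analysis of how the hook lengths $h_{2,j+1}, h_{3,j+2}, \dots$ along a diagonal relate back to the first-row hook lengths $a_1, \dots, a_k$ — the point being that for the minor $[m]$, although the deeper rows of $\lambda$ are involved geometrically, the hook-length \emph{products} telescope in a way that only "remembers" the first row plus the shift parameter $m-k-1$. I would handle this by first proving the identity in the special case $\lambda = [k]$ (a single row), where $E_m([k])$ reduces to a sum of products of consecutive integers and the Stirling polynomial identity can be checked directly from the definition of $S_j$, and then bootstrapping to general $\lambda$ by observing that the excited-diagram sum for $[m]$ depends on $\lambda$ only through the first-row hook lengths $a_1 > \dots > a_k$ together with $n$ — an invariance statement that itself may warrant a short separate lemma. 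Once that invariance is in hand, it suffices to match the two sides for the one-row case, and a degree/coefficient count in the $a_i$ finishes the argument.
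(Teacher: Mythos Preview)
Your instinct to split by whether cell $(1,1)$ is occupied and then induct matches the paper, but the execution has two real gaps. First, the recursion uses the wrong sub-partitions. When $(1,1)$ is occupied the remaining $m-1$ pebbles live in columns $\ge 2$, so the relevant partition is $\wt\lambda$ with the first \emph{column} deleted (first-row hooks exactly $a_2,\dots,a_k$, no shift); when $(1,1)$ is vacated, $i_1\ge 2$ forces all pebbles into rows $\ge 2$ \emph{and} columns $\ge 2$, so the relevant partition is $\wh\lambda$ with the first row and first column both deleted (first-row hooks $a_2-(\ell{+}1),\dots,a_p-(\ell{+}1)$ where $\ell=\lambda_1-\lambda_2$ and $p=\lambda_2$). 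Your $\lambda'=[\lambda_1-1,\lambda_2,\dots]$ appears in neither branch, need not even be a partition when $\lambda_1=\lambda_2$, and has first-row hooks $a_1-1,\dots,a_{k-1}-1$, not a shift of $a_2,\dots,a_k$. (Relatedly, your earlier parametrization of excited diagrams of $[m]$ by a single split point is incorrect: by Corollary~\ref{cor:EmFormula} they are indexed by arbitrary weakly increasing $i_1\le\cdots\le i_m$ and can plateau in any row, not just row $1$.)

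Second, and more seriously, once the correct recursion $E_m(\lambda)=a_1\, E_{m-1}(\wt\lambda)+E_m(\wh\lambda)$ is in hand and you apply the induction hypothesis, you face an expression mixing $\sigma_i(a_2,\dots,a_k)$ with $\sigma_i\bigl(a_2-(\ell{+}1),\dots,a_p-(\ell{+}1)\bigr)$ that must be massaged into the claimed formula in $\sigma_i(a_1,\dots,a_k)$. This is where the actual work lies: it requires a nontrivial identity among Stirling polynomials (the paper's Lemma~\ref{l:maincidentity}, prepared by Lemmas~\ref{l:sigma_concatenate_l} and~\ref{l:sigma_plus_d}) that does not fall out of routine bookkeeping. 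Your proposed shortcut --- prove first that $E_m(\lambda)$ depends only on the first-row hook lengths, then verify the one-row case --- is circular: that invariance statement \emph{is} the content of the theorem, and there is no independent route to it that sidesteps the Stirling-polynomial algebra.
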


Theorem \ref{thm:Qklincomb} expresses the excitation factor $E_m(\lambda)$ as an explicit $\mathbb{Q}[k]$-linear combination of $\sigma_i(a_1, a_2, \dots, a_k)$. The goal of this section is to prove Theorem \ref{thm:Qklincomb}, and then uses it to prove Theorem \ref{thm:SONAR}. The proof of Theorem \ref{thm:Qklincomb} requires algebraic identities of the Stirling polynomials $S_j$. We will spend the following part establishing these identities which we will need later.

\medskip

\subsection{The Stirling Polynomials} \label{ss:algpropci}
The Stirling polynomials are well-studied objects in Algebraic Combinatorics with many equivalent definitions. (See for example, \cite{EMOT81, Ro05, Stan13}.) For our purpose, we will use the following definition, which links the Stirling polynomials $S_j$ to the coefficients of the polynomial of the form $(X+1)(X+2) \cdots (X+\ell)$ for a positive integer $\ell$. For each non-negative integer $j \ge 0$, we define $S_j(x) \in \mathbb{Q}[x]$ to be the polynomial such that for every positive integer $\ell$, the equation
\[
\binom{\ell}{j} S_j(\ell) = \sigma_j (\ell, \dots, 1)
\]
holds. Let $c_j(x)$ denote the polynomial $\binom{x}{j} S_j(x) \in \mathbb{Q}[x]$. With this notation, the formula in Theorem \ref{thm:Qklincomb} can be written as
\[
E_m(\lambda) = \sum_{i=0}^m (-1)^{m-i} c_{m-i}(m-k-1) \cdot \sigma_i(a_1, a_2, \dots, a_k).
\]
The first few Stirling polynomials can be calculated explicitly as follows:
\begin{itemize}
\item $S_0(x) = 1$,
\item $S_1(x) = \frac{1}{2}(x+1)$,
\item $S_2(x) = \frac{1}{12}(x+1)(3x+2)$,
\item $S_3(x) = \frac{1}{8}x(x+1)^2$, and
\item $S_4(x) = \frac{1}{240} (x+1)(15x^3+15x^2-10x-8)$.
\end{itemize}

In the proof of Theorem \ref{thm:Qklincomb}, we will work with the elementary symmetric polynomials of lists of real numbers. In particular, we will be interested in how the value of $\sigma_k$, for each $k$, changes when we transform the list in certain ways. In the following lemma, we will see that when we add the positive integers $\ell, \ell-1, \dots, 1$ to a list of real numbers, the value of $\sigma_k$ of the new list can be described as a linear combination of the values of $\sigma_j$ of the old list with coefficients in the form of the Stirling polynomials.

\begin{lemma}
Let $k, \ell, N \in \mathbb{Z}_{\ge 0}$, and $x_1, \dots, x_N \in \mathbb{R}$. Then,
\[
\sigma_k(x_1, x_2, \dots, x_N, \ell, \ell -1, \dots, 1) = \sum_{j=0}^k \binom{\ell}{k-j} S_{k-j}(\ell) \cdot \sigma_j(x_1, \dots, x_N).
\] \label{l:sigma_concatenate_l}
\end{lemma}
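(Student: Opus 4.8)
The plan is to prove Lemma~\ref{l:sigma_concatenate_l} by the generating-function identity for elementary symmetric polynomials, namely that for any finite list $y_1, \dots, y_M$ of real numbers one has $\sum_{k \ge 0} \sigma_k(y_1, \dots, y_M) \, T^k = \prod_{i=1}^M (1 + y_i T)$. First I would apply this to the concatenated list $(x_1, \dots, x_N, \ell, \ell-1, \dots, 1)$, so that its generating polynomial factors as
\[
\left( \prod_{i=1}^N (1 + x_i T) \right) \cdot \prod_{r=1}^{\ell} (1 + r T).
\]
The first factor is $\sum_{j \ge 0} \sigma_j(x_1, \dots, x_N) \, T^j$ by definition. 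For the second factor, I would expand $\prod_{r=1}^{\ell}(1 + rT) = \sum_{s \ge 0} \sigma_s(\ell, \ell-1, \dots, 1) \, T^s$, again directly from the generating-function identity applied to the list $(\ell, \ell-1, \dots, 1)$. Then, comparing the coefficient of $T^k$ on both sides of the product gives
\[
\sigma_k(x_1, \dots, x_N, \ell, \dots, 1) = \sum_{j=0}^{k} \sigma_j(x_1, \dots, x_N) \cdot \sigma_{k-j}(\ell, \ell-1, \dots, 1).
\]

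The remaining step is to rewrite $\sigma_{k-j}(\ell, \ell-1, \dots, 1)$ using the definition of the Stirling polynomials given in Subsection~\ref{ss:algpropci}. By that definition, $S_{k-j}(x)$ is the polynomial in $\mathbb{Q}[x]$ such that $\binom{\ell}{k-j} S_{k-j}(\ell) = \sigma_{k-j}(\ell, \ell-1, \dots, 1)$ for every positive integer $\ell$; and for $\ell = 0$ the list is empty, so $\sigma_{k-j}$ vanishes for $k-j > 0$ while $\binom{0}{k-j}$ also vanishes, so the substitution remains valid (and the $k=j$ term is $1 = \binom{\ell}{0} S_0(\ell)$). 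Substituting this into the displayed convolution identity immediately yields the claimed formula
\[
\sigma_k(x_1, x_2, \dots, x_N, \ell, \ell-1, \dots, 1) = \sum_{j=0}^k \binom{\ell}{k-j} S_{k-j}(\ell) \cdot \sigma_j(x_1, \dots, x_N),
\]
completing the proof.

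I do not anticipate any genuine obstacle here: the lemma is essentially a bookkeeping consequence of the multiplicativity of the elementary-symmetric generating function together with the very definition of the Stirling polynomials. The only point requiring a small amount of care is the edge behavior when $\ell = 0$ (and when $k$ exceeds the relevant lengths, where all terms in question vanish), which I would dispatch with the one-line remark above rather than belabor. If one prefers to avoid invoking $T$-series manipulations at all, an equivalent route is a direct combinatorial argument: a size-$k$ subset of the concatenated list splits uniquely into a size-$j$ subset of $\{x_i\}$ and a size-$(k-j)$ subset of $\{\ell, \dots, 1\}$, and summing the products of chosen entries over all such splits gives the same convolution; but the generating-function phrasing is cleaner and I would present that one.
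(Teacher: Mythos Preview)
Your proof is correct and essentially the same as the paper's: both rest on the multiplicativity of elementary symmetric polynomials under concatenation of variable lists, followed by the defining identity $\sigma_{k-j}(\ell,\dots,1)=\binom{\ell}{k-j}S_{k-j}(\ell)$. The only difference is presentational---you phrase the convolution via the generating function $\prod(1+y_iT)$, while the paper does the direct combinatorial split (which you yourself mention as the alternative route).
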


\begin{proof}
The quantity $\sigma_k(x_1, \dots, x_N, \ell, \dots, 1)$ is the sum of the products of $k$ elements from $x_1, \dots, x_N, \ell, \dots, 1$. We can write this sum as
\[
\sigma_k(x_1, \dots, x_N, \ell, \dots, 1) = A_0 + A_1 + \dots + A_k
\]
where $A_j$ is the sum of the products of $k$ elements from $x_1, \dots, x_N, \ell, \dots, 1$ with exactly $j$ multiplicands from $x_1, \dots, x_N$ and exactly $k-j$ multiplicands from $\ell, \dots, 1$. Thus,
\[
A_j = \sigma_j(x_1, \dots, x_N) \cdot \sigma_{k-j} (\ell, \dots, 1) = \binom{\ell}{k-j} S_{k-j}(\ell) \cdot \sigma_j(x_1, \dots, x_N).
\]
Summing $A_j$ for $j=0,1,\dots, k$ yields the desired result.
\end{proof}

If we add a real number to every entry of a list, the value $\sigma_k$ is changed as follows.

\begin{lemma}
Let $k, N \in \mathbb{Z}_{\ge 0}$, and $x_1, \dots, x_N, d \in \mathbb{R}$. Then,
\[
\sigma_k(x_1 + d, x_2 + d, \dots, x_N + d) = \sum_{j=0}^k \binom{N-j}{k-j} \cdot d^{k-j} \cdot \sigma_j(x_1, \dots, x_N).
\] \label{l:sigma_plus_d}
\end{lemma}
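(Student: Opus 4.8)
The plan is to reduce the claim to the single-variable identity
\[
\sigma_k(x_1+d,\dots,x_N+d) = \sum_{j=0}^k \binom{N-j}{k-j} d^{k-j}\,\sigma_j(x_1,\dots,x_N)
\]
by a generating-function argument. Recall that for any list of reals $y_1,\dots,y_N$ the generating polynomial of the elementary symmetric functions is
\[
\prod_{i=1}^N (1 + y_i T) = \sum_{k=0}^N \sigma_k(y_1,\dots,y_N)\, T^k .
\]
First I would apply this with $y_i = x_i + d$, so that the left-hand side of the lemma is the coefficient of $T^k$ in $\prod_{i=1}^N (1 + (x_i+d)T)$. The key manipulation is to rewrite each factor as $1 + (x_i+d)T = (1 + dT) + x_i T = (1+dT)\bigl(1 + \tfrac{x_i T}{1+dT}\bigr)$, which is legitimate as an identity of formal power series in $T$ since $1+dT$ is invertible in $\mathbb{R}[[T]]$.

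Carrying this out, I would get
\[
\prod_{i=1}^N (1+(x_i+d)T) = (1+dT)^N \prod_{i=1}^N\Bigl(1 + x_i \cdot \tfrac{T}{1+dT}\Bigr) = (1+dT)^N \sum_{j=0}^N \sigma_j(x_1,\dots,x_N)\,\Bigl(\tfrac{T}{1+dT}\Bigr)^j ,
\]
using the generating-function identity again with the substitution $T \mapsto T/(1+dT)$. Then it remains to extract the coefficient of $T^k$: from the $j$-th summand one needs $[T^{k}]\, (1+dT)^{N-j} T^j = [T^{k-j}]\,(1+dT)^{N-j} = \binom{N-j}{k-j} d^{k-j}$, which gives exactly the claimed formula after summing over $j$ (the terms with $j>k$ contribute nothing).

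I do not expect any genuine obstacle here; the only point requiring a word of care is that the rearrangement takes place in the ring of formal power series $\mathbb{R}[[T]]$ rather than in polynomials, so one should either note that both sides are polynomials in $T$ that agree as power series, or — to keep everything polynomial — multiply through by $(1+dT)^N$ first and compare coefficients directly, i.e. prove the equivalent polynomial identity $\prod_i(1+(x_i+d)T) \equiv \sum_j \sigma_j(x) \, T^j (1+dT)^{N-j}$ by the same factorization of each linear factor. An alternative, entirely combinatorial proof is also available: expand $\sigma_k(x_1+d,\dots,x_N+d) = \sum_{|S|=k}\prod_{i\in S}(x_i+d)$, then expand each product over subsets of $S$ on which one picks $x_i$ versus $d$; grouping by the set $R\subseteq S$ where $x_i$ is chosen, with $|R| = j$, each monomial $\prod_{i\in R} x_i$ with $|R|=j$ arises from exactly $\binom{N-j}{k-j}$ choices of the remaining $k-j$ indices of $S$, each contributing a factor $d^{k-j}$, which reproduces the right-hand side. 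I would present the generating-function version as the main proof since it is shortest, and perhaps remark that it is the $d$-shift analogue of the concatenation Lemma \ref{l:sigma_concatenate_l}.
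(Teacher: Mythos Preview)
Your proposal is correct and is essentially the same generating-function approach as the paper's proof. The paper works with the monic polynomial $\prod_i (X + (x_i+d))$, regroups each factor as $(X+d)+x_i$, expands as $\sum_j (X+d)^{N-j}\sigma_j(x_1,\dots,x_N)$, and extracts the coefficient of $X^{N-k}$; this is exactly your ``keep everything polynomial'' variant (under the reciprocal change $X\leftrightarrow 1/T$), so the paper avoids the formal-power-series detour you flagged from the outset.
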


\begin{proof}
Recall that for $P(X) \in \mathbb{C}[X]$, we use the notation $[P(X)]_{X^i}$ to denote the coefficient of $X^i$ of $P(X)$. We have
\begin{align*}
\sigma_k(x_1+d, \dots, x_N+d) &= \left[ (X+(x_1+d)) \cdots (X+(x_N+d)) \right]_{X^{N-k}} \\
&= \left[ ((X+d)+x_1) \cdots ((X+d)+x_N) \right]_{X^{N-k}} \\
&= \left[ \sum_{j=0}^N (X+d)^{N-j} \sigma_j(x_1, \dots, x_N) \right]_{X^{N-k}} \\
&= \sum_{j=0}^k \binom{N-j}{k-j} d^{k-j} \sigma_j(x_1, \dots, x_N).
\end{align*}
Note that in the final equality, we changed the upper limit of the sum from $N$ to $k$. This is valid because, for $j>k$, there does not exist an $X^{N-k}$-term in $(X+d)^{N-j}$.
\end{proof}

Next, we will prove a general lemma about polynomials.

\begin{lemma}
Let $B_1, B_2, B_3$ be integers. Suppose that the bivariate polynomial $P(X,Y) \in \mathbb{C}[X,Y]$ satisfies $P(x,y) = 0$ for all $(x,y) \in \mathbb{Z} \times \mathbb{Z}$ such that $x \le B_1$, $y \ge B_2$, and $x+y \le B_3$. Then, $P$ is the zero polynomial in $\mathbb{C}[X,Y]$. \label{l:polyptscheck}
\end{lemma}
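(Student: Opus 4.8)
The plan is to reduce the bivariate statement to a one-variable fact applied along infinitely many horizontal (or, if one prefers, diagonal) lines. Fix an integer $y_0$ with $y_0 \ge B_2$ and consider the univariate polynomial $P(X, y_0) \in \mathbb{C}[X]$. The hypothesis tells us that $P(x, y_0) = 0$ for every integer $x$ with $x \le B_1$ and $x \le B_3 - y_0$, i.e. for every integer $x \le \min(B_1, B_3 - y_0)$; this is an infinite set of integers, so $P(X, y_0)$ vanishes at infinitely many points and is therefore the zero polynomial in $\mathbb{C}[X]$. Thus, writing $P(X,Y) = \sum_{t} Q_t(Y) X^t$ with $Q_t(Y) \in \mathbb{C}[Y]$, we conclude $Q_t(y_0) = 0$ for every $t$ and every integer $y_0 \ge B_2$. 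Each $Q_t$ therefore vanishes at the infinitely many integer points $y_0 = B_2, B_2+1, B_2+2, \dots$, so each $Q_t$ is the zero polynomial, and hence $P$ is the zero polynomial.

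Concretely, the key steps in order are: (1) recall that a single-variable complex polynomial with infinitely many roots is identically zero; (2) for each fixed integer $y_0 \ge B_2$, observe that the region $\{x \in \mathbb{Z} : x \le B_1,\ x \le B_3 - y_0\}$ is a nonempty (indeed infinite) set on which $P(\cdot, y_0)$ vanishes, and deduce $P(X,y_0) \equiv 0$; (3) expand $P$ in powers of $X$ with coefficients $Q_t(Y)$, so that step (2) gives $Q_t(y_0) = 0$ for all integers $y_0 \ge B_2$; (4) apply step (1) again to each $Q_t$ to conclude $Q_t \equiv 0$, hence $P \equiv 0$.

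I do not expect a serious obstacle here; the only thing to be careful about is making sure the slicing sets are genuinely infinite. For step (2) this is automatic: for any fixed $y_0$, the set of integers $x$ with $x \le \min(B_1, B_3 - y_0)$ is infinite regardless of the (finite) values of $B_1, B_2, B_3$. For step (4), the set of integers $y_0 \ge B_2$ is likewise infinite. So the mild ``main obstacle'' is purely bookkeeping: one must phrase the constraint $x + y \le B_3$ correctly when $y$ is fixed, and note that the remaining constraint on $x$ is still one-sided and hence leaves infinitely many admissible integers. An alternative route, should one prefer symmetry, is to slice along diagonals $x + y = c$ for $c \le B_3$, using a substitution $Y = c - X$ and the same infinitude argument in the variable $x \le \min(B_1, c - B_2)$; either way the structure of the argument is identical.
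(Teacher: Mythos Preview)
Your proposal is correct and follows essentially the same approach as the paper: fix $y_0 \ge B_2$, use the infinitely many admissible $x \le \min(B_1, B_3 - y_0)$ to kill $P(X,y_0)$ as a polynomial in $X$, then let $y_0$ range over $\mathbb{Z}_{\ge B_2}$ to kill each coefficient $Q_t(Y)$. The paper's proof is exactly this two-step slicing argument, with the same expansion $P(X,Y) = \sum_t p_t(Y) X^t$.
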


\begin{proof}
If $P$ is not identically zero, write the polynomial $P(X,Y)$ as
\[
P(X,Y) = p_k(Y) \cdot X^k + p_{k-1}(Y) \cdot X^{k-1} + \dots + p_0(Y)
\]
where $p_i(Y) \in \mathbb{C}[Y]$ and $k$ is the $X$-degree of $P$.

For each fixed $y_0 \in \mathbb{Z}_{\ge B_2}$, we have that $P(x,y_0) = 0$ for all integers $x \le \min\{B_1, B_3 - y_0\}$. Therefore, $P(X,y_0) \in \mathbb{C}[X]$ has infinitely many zeros, and so $P(X,y_0)$ must be identically zero in $\mathbb{C}[X]$. This shows that $p_k(y_0) = p_{k-1}(y_0) = \dots = p_0(y_0) = 0$ for each $y_0 \in \mathbb{Z}_{\ge B_2}$. Now, by varying $y_0$, each $p_i$ also has infinitely many zeros, and therefore, $p_i$ is identically zero for each $i= 0,1, \dots, k$. Thus, $P$ must be the zero polynomial.
\end{proof}

In the following, we present an important lemma about the Stirling polynomials.

\begin{lemma}
Let $M \in \mathbb{Z}_{\ge 0}$ and $q,r \in \mathbb{Z}$. Then,
\begin{align*}
&\sum_{i=0}^M \binom{q+r-i-1}{M-i} \binom{q-1}{i} S_{M-i}(M-q-r) S_i(q-1) \\
&= \sum_{i=0}^M (-1)^i \binom{r-i-1}{M-i} \binom{r-1}{i} S_{M-i}(M-r) q^i
\end{align*} \label{l:maincidentity}
\end{lemma}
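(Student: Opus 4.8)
\textbf{Proof proposal for Lemma~\ref{l:maincidentity}.}

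The plan is to prove this as an identity of polynomials in $q$ (and then in $r$), using Lemma~\ref{l:polyptscheck} to reduce to checking the identity on a two-dimensional lattice region where both sides have a transparent combinatorial meaning. The key observation is that, after multiplying through by the appropriate binomial coefficients, both sides are built from the polynomials $c_j(x) = \binom{x}{j}S_j(x)$, and by the definition in Subsection~\ref{ss:algpropci} we have $c_j(\ell) = \sigma_j(\ell,\ell-1,\dots,1)$ for every positive integer $\ell$. So first I would rewrite the left-hand side: the factor $\binom{q+r-i-1}{M-i}S_{M-i}(M-q-r)$ is, up to a sign, $c_{M-i}(M-q-r)$ (reading the argument of $c$ as negative and using $\binom{x}{j} = (-1)^j\binom{j-x-1}{j}$), while $\binom{q-1}{i}S_i(q-1) = c_i(q-1) = \sigma_i(q-1,q-2,\dots,1)$ when $q \ge 2$. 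Similarly the right-hand side involves $c_{M-i}(M-r)$ and the plain monomials $q^i$, weighted by $\binom{r-1}{i}$.

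Next I would fix $q = \ell+1$ a large positive integer and $r = s+1$ a large positive integer and interpret both sides via Lemma~\ref{l:sigma_concatenate_l} and Lemma~\ref{l:sigma_plus_d}. The right-hand side, with $q^i$ appearing against $\binom{r-1}{i}S_{M-i}(M-r)$-type coefficients, should match $\sigma_M$ of the shifted list obtained by adding $q$ to each of $s, s-1, \dots, 1$: indeed Lemma~\ref{l:sigma_plus_d} with $N = s$, $d = q$, $x_t = s+1-t$ expands $\sigma_M(q+s, q+s-1, \dots, q+1)$ precisely as $\sum_i \binom{s-i}{M-i} q^{M-i} \sigma_i(s,\dots,1)$, and $\sigma_i(s,\dots,1) = c_i(s)$; converting the index and massaging binomials turns this into the right-hand side. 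The left-hand side, meanwhile, should compute the same quantity $\sigma_M$ of the list $(q+s, q+s-1, \dots, q+1)$, but by first writing this list as $(s', s'-1, \dots, 1)$ with the bottom $q$ entries $(q, q-1, \dots, 1)$ stripped off — i.e., $\{q+1,\dots,q+s\} = \{1,2,\dots,q+s\} \setminus \{1,\dots,q\}$ — and applying Lemma~\ref{l:sigma_concatenate_l} together with an inclusion–exclusion / quotient-of-generating-functions step $\prod_{t=1}^{q+s}(X+t) = \left(\prod_{t=1}^{q}(X+t)\right)\left(\prod_{t=q+1}^{q+s}(X+t)\right)$. Extracting the $X^{s - M}$ coefficient and using $\sigma_j$ of $\{1,\dots,q+s\}$ and of $\{1,\dots,q\}$ gives exactly the $c_{M-i}(M-q-r)$ and $c_i(q-1)$ factors on the left. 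So both sides equal $[\,\prod_{t=q+1}^{q+s}(X+t)\,]_{X^{s-M}}$ for all large positive integers $q,r$.

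Having verified the identity for all $(q,r)$ in an infinite two-dimensional lattice region (say $q \ge 2$, $r \ge 2$, or whatever the combinatorial interpretation cleanly requires — note $M$ is a fixed nonnegative integer so everything in sight is a polynomial in $q$ and $r$), I would invoke Lemma~\ref{l:polyptscheck} with suitable choices of $B_1, B_2, B_3$ — taking $X = -q$, $Y = r$ or similar so that the region ``$x \le B_1$, $y \ge B_2$, $x+y \le B_3$'' captures a cofinite set of the lattice points where I have equality — to conclude that the difference of the two sides is the zero polynomial. Both sides are genuinely polynomial in $q$ and $r$ for fixed $M$, since $\binom{q+r-i-1}{M-i}$, $\binom{q-1}{i}$, $S_j$ evaluated at affine functions of $q,r$, and $q^i$ are all polynomials, so Lemma~\ref{l:polyptscheck} applies after clearing no denominators (it already works over $\mathbb{C}[X,Y]$). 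The main obstacle I anticipate is bookkeeping: matching the signs and the shifts in the arguments of $S_j$ between the ``$c_j$ of a negative argument'' form and the ``$\sigma_j$ of $\{1,\dots,\ell\}$'' form, and making sure the inclusion–exclusion step (expressing $\sigma$ of a truncated run $\{q+1,\dots,q+s\}$ in terms of $\sigma$ of $\{1,\dots,q+s\}$ and $\{1,\dots,q\}$) produces exactly the binomial weights $\binom{q+r-i-1}{M-i}$ and the alternating signs $(-1)^i$ on the right. If the direct generating-function manipulation gets unwieldy, a fallback is to treat $r$ as the ``main'' variable: fix $q$ a large positive integer, show both sides agree as polynomials in $r$ by the same $\sigma$-of-a-shifted-list argument, then let $q$ vary — but I expect the two-variable Lemma~\ref{l:polyptscheck} route to be the cleanest.
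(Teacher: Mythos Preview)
Your overall strategy is exactly the paper's: show both sides equal $\sigma_M$ of an explicit list of consecutive integers on a lattice region of $(q,r)$, then invoke Lemma~\ref{l:polyptscheck}. The gap is your choice of region. With $q$ and $r$ both large positive, the Stirling-polynomial arguments $M-q-r$ and $M-r$ are both \emph{negative}, so the defining relation $c_j(\ell)=\sigma_j(\ell,\dots,1)$ no longer applies to them, and your claimed identification of the right-hand side with $\sigma_M(q+s,\dots,q+1)$ via Lemma~\ref{l:sigma_plus_d} is false. A one-line check: for $M=1$, $r=3$ the right-hand side is $-2q-1$, while $\sigma_1(q+1,q+2)=2q+3$. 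The ``quotient of generating functions'' step you propose for the left-hand side is likewise not what Lemma~\ref{l:sigma_concatenate_l} gives you; that lemma expresses $\sigma$ of a longer list in terms of $\sigma$ of a shorter one, not the reverse, and inverting introduces an infinite series rather than the finite sum you need.

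The paper fixes this by working instead in the region $r\le 0$, $q\ge 2$, $q+r\le M-1$. There $M-q-r\ge 1$, $q-1\ge 1$, and $M-r>0$, so every $c_j$ in sight is genuinely $\sigma_j(1,\dots,\ell)$ for a positive $\ell$. The left-hand side then becomes $\sum_i \sigma_{M-i}(-(M-q-r),\dots,-1)\,\sigma_i(1,\dots,q-1)=\sigma_M(-(M-q-r),\dots,-1,0,1,\dots,q-1)$ by a direct splitting argument (no quotient needed), and one reaches the right-hand side by negating the list and applying Lemma~\ref{l:sigma_plus_d} with shift $-q$ to the list $1,\dots,M-r$. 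Lemma~\ref{l:polyptscheck} then finishes exactly as you describe. So your architecture is correct; only the region and the specific $\sigma_M$-interpretation need to change.
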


\begin{proof}
For each fixed $M \in \mathbb{Z}_{\ge 0}$, we will prove the identity for $(q,r) \in \mathbb{Z} \times \mathbb{Z}$ such that $r \le 0$, $q \ge 2$, and $r+q \le M-1$. The result will be sufficient for us to conclude that the identity is true for all $(q,r) \in \mathbb{Z} \times \mathbb{Z}$ by Lemma \ref{l:polyptscheck}.

For these choices of $(q,r)$, we have $M-q-r \ge 1$, $q-1 \ge 1$, and $M-r > 0$. We note that the left-hand-side of the above equation is
\begin{align*}
&\sum_{i=0}^M \binom{q+r-i-1}{M-i} \binom{q-1}{i} S_{M-i}(M-q-r) S_i(q-1) \\
&= \sum_{i=0}^M (-1)^{M-i} \binom{M-q-r}{M-i} S_{M-i}(M-q-r) \cdot \binom{q-1}{i} S_i(q-1) \\
&= \sum_{i=0}^M (-1)^{M-i} \cdot \sigma_{M-i}(1,2, \dots, M-q-r) \cdot \sigma_i(1, \dots, q-1) \\
&= \sum_{i=0}^M \sigma_{M-i} (-(M-q-r), -(M-q-r)+1, \dots, -1) \cdot \sigma_i(1,\dots, q-1) \\
&= \sigma_M (-(M-q-r), -(M-q-r)+1, \dots, -1,0,1, \dots, q-1).
\end{align*}
The last equality is due to the double-counting argument which computes
\[
\sum_{-(M-q-r) \le \alpha_1 < \dots < \alpha_M \le q-1} \alpha_1 \cdot ~\cdots ~\cdot \alpha_M
\]
in two ways. The first way is to consider how many of the $\alpha_j$'s are positive, and add up the sums from the different cases, while the second way is directly by the definition of $\sigma_M$.

Next, we apply Lemma \ref{l:sigma_plus_d} to the quantity above to obtain the following.
\begin{align*}
&\sigma_M (-(M-q-r), -(M-q-r)+1, \dots, -1,0,1, \dots, q-1) \\
&= (-1)^M \cdot \sigma_M \left( 1-q, 2-q, \dots, (M-r)-q \right) \\
&\overset{\text{(Lemma \ref{l:sigma_plus_d})}}= (-1)^M \cdot \sum_{i=0}^M \binom{M-r-i}{M-i} (-q)^{M-i} \cdot \sigma_i(1, \dots, M-r) \\
&= (-1)^M \cdot \sum_{i=0}^M \binom{i-r}{i} (-q)^i \sigma_{M-i}(1, \dots, M-r) \\
&= \sum_{i=0}^M (-1)^i \binom{r-i-1}{M-i} \binom{r-1}{i} S_{M-i}(M-r) q^i.
\end{align*}
The second-to-last equality above is obtained by reindexing $i \leftrightarrow M-i$ in the sum. The last equality is due to the fact that
\[
(-1)^i \binom{i-r}{i} = \binom{r-1}{i} \quad \text{and} \quad (-1)^{M-i} \binom{M-r}{M-i} = \binom{r-i-1}{M-i}
\]
for $0 \le i \le M$. These equations hold even when $r$ is not positive. We have finished the proof of the lemma.
\end{proof}

A special case of Lemma \ref{l:maincidentity} gives a particularly nice result.

\begin{cor}
Let $m \in \mathbb{Z}_{\ge 0}$ and $k \in \mathbb{Z}$. Then,
\[
\sum_{i=0}^m \binom{m}{i} S_{m-i} (m-k-1) S_i(k) = m!
\] \label{c:EmRow}
\end{cor}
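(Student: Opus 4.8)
The plan is to obtain Corollary \ref{c:EmRow} as a specialization of Lemma \ref{l:maincidentity}, after which a binomial factor gets divided out. Since Lemma \ref{l:maincidentity} holds for all integers $q,r$, I would apply it with $M = m$, $q = k+1$, and $r = 0$ (legitimate for every integer $k$). Under this choice $S_i(q-1)$ becomes $S_i(k)$ and $S_{M-i}(M-q-r)$ becomes $S_{m-i}(m-k-1)$, so the left-hand side of Lemma \ref{l:maincidentity} reads $\sum_{i=0}^m \binom{k-i}{m-i}\binom{k}{i} S_{m-i}(m-k-1) S_i(k)$. Invoking the subset-of-a-subset identity $\binom{k}{i}\binom{k-i}{m-i} = \binom{k}{m}\binom{m}{i}$ — true for all integers $k \ge m$, hence an identity of polynomials in $k$ — this equals $\binom{k}{m}$ times the left-hand side of Corollary \ref{c:EmRow}.

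For the right-hand side of Lemma \ref{l:maincidentity} under the same specialization, I would use $\binom{r-1}{i} = \binom{-1}{i} = (-1)^i$ and $\binom{r-i-1}{M-i} = \binom{-i-1}{m-i} = (-1)^{m-i}\binom{m}{i}$; collecting signs gives $(-1)^m \sum_{i=0}^m (-1)^i \binom{m}{i} S_{m-i}(m)(k+1)^i$. When $m \ge 1$, the defining relation of the Stirling polynomials with $\ell = m$ and $j = m-i$ yields $\binom{m}{i} S_{m-i}(m) = \sigma_{m-i}(1,2,\dots,m)$; substituting and reindexing by $j = m-i$ turns the sum into $\sum_{j=0}^m (-1)^j \sigma_j(1,\dots,m)(k+1)^{m-j}$, which is precisely $\prod_{i=1}^m\bigl((k+1)-i\bigr) = k(k-1)\cdots(k-m+1) = m!\binom{k}{m}$. (The case $m = 0$ is trivial, both sides of the corollary being $1$.)

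Combining the two computations gives $\binom{k}{m}\cdot\bigl(\text{LHS of Corollary \ref{c:EmRow}}\bigr) = m!\binom{k}{m}$ for every integer $k$. To finish, I would note that the left-hand side of the corollary is a polynomial in $k$ which agrees with the constant $m!$ at every integer $k \ge m$ (where $\binom{k}{m} \ne 0$), hence at infinitely many points, and is therefore identically equal to $m!$.

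None of these steps is long, so I expect no serious obstacle; the points that need care are the final division (one must pass through polynomiality in $k$ rather than cancel $\binom{k}{m}$ naively, since it vanishes for $0 \le k < m$) and the sign bookkeeping in the middle paragraph, where one has to recognize the alternating elementary-symmetric sum as the falling factorial $\prod_{i=1}^m((k+1)-i)$.
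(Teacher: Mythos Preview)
Your proof is correct and follows essentially the same route as the paper: specialize Lemma \ref{l:maincidentity} with $M=m$, $q=k+1$, $r=0$, rewrite the left side using $\binom{k}{i}\binom{k-i}{m-i}=\binom{k}{m}\binom{m}{i}$, recognize the right side as $\prod_{i=1}^m((k+1)-i)=m!\binom{k}{m}$, and then cancel $\binom{k}{m}$ via a polynomial-in-$k$ argument. Your explicit handling of the case $m=0$ and the care taken over where $\binom{k}{m}$ vanishes are nice touches, but the approach is the same as the paper's.
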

\begin{proof}
By substituting $M \mapsto m$, $q \mapsto k+1$, and $r \mapsto 0$ in Lemma \ref{l:maincidentity}, we obtain 
\footnotesize
\[ 
\sum_{i=0}^m \binom{k-i}{m-i} \binom{k}{i} S_{m-i}(m-k-1) S_i(k) = \sum_{i=0}^m (-1)^i \binom{-i-1}{m-i} \binom{-1}{i} S_{m-i}(m) (k+1)^i. 
\]
\normalsize Therefore,
\footnotesize
\[
\binom{k}{m} \cdot \sum_{i=0}^m \binom{m}{i} S_{m-i} (m-k-1) S_i(k) = \sum_{i=0}^m (-1)^{m-i} \sigma_{m-i} (m,m-1, \dots, 1) \cdot (k+1)^i.
\] \normalsize
Note that the right hand side is precisely the expansion of 
\[
((k+1)-1)((k+1)-2) \cdots ((k+1)-m) = \binom{k}{m} \cdot m!.
\]
Hence,
\[
\binom{k}{m} \cdot \sum_{i=0}^m \binom{m}{i} S_{m-i}(m-k-1) S_i(k) = \binom{k}{m} \cdot m!.
\]
The equation above is true for every integer $k$. Therefore, for a fixed $m$, the equation is an equality of polynomials in $k$, and so we may cancel $\binom{k}{m}$ from both sides to obtain
\[
\sum_{i=0}^m \binom{m}{i} S_{m-i}(m-k-1) S_i(k) = m!
\]
as desired.
\end{proof}

\medskip

\subsection{Recovering Partitions}
After having established a few algebraic identities, we will prove that the SONAR technique can always recover the partition. In the following, we will prove Theorem \ref{thm:Qklincomb}, which we restate here.

\Qklincomb*

\begin{center}
\begin{figure}
\begin{tikzpicture}
\footnotesize
\def \a {0.84};
\def \r {\a/4};

\draw (0,7*\a) -- (10*\a,7*\a);
\draw (0,6*\a) -- (10*\a,6*\a);
\draw (2*\a,2*\a) -- (5*\a,2*\a);
\draw (0,0*\a) -- (2*\a,0*\a);
\draw (2*\a,2*\a) -- (5*\a,2*\a);
\draw (0*\a,7*\a) -- (0*\a,0*\a);
\draw (1*\a,7*\a) -- (1*\a,0*\a);
\draw (2*\a,2*\a) -- (2*\a,0*\a);
\draw (5*\a,7*\a) -- (5*\a,2*\a);
\draw (2*\a,7*\a) -- (2*\a,6*\a);
\draw (4*\a,7*\a) -- (4*\a,6*\a);
\draw (6*\a,7*\a) -- (6*\a,6*\a);
\draw (7*\a,7*\a) -- (7*\a,6*\a);
\draw (9*\a,7*\a) -- (9*\a,6*\a);
\draw (10*\a,7*\a) -- (10*\a,6*\a);

\node at (3*\a,6.5*\a) {\normalsize $\dots$};
\node at (8*\a,6.5*\a) {\normalsize $\dots$};
\node at (3*\a,4*\a) {\normalsize $\ddots$};
\node at (0.5*\a,4*\a) {\normalsize $\vdots$};

\node at (0.5*\a,6.5*\a) {$a_1$};
\node at (1.5*\a,6.5*\a) {$a_2$};
\node at (4.5*\a,6.5*\a) {$a_p$};
\node at (5.5*\a,6.5*\a) {$\ell$};
\node at (6.5*\a,6.5*\a) {$\ell-1$};
\node at (9.5*\a,6.5*\a) {$1$};

\end{tikzpicture}
\caption{Diagram for the hook lengths of $\lambda$ in the proof of Theorem \ref{thm:Qklincomb}} \label{fig:ProofThmSInd}
\end{figure}
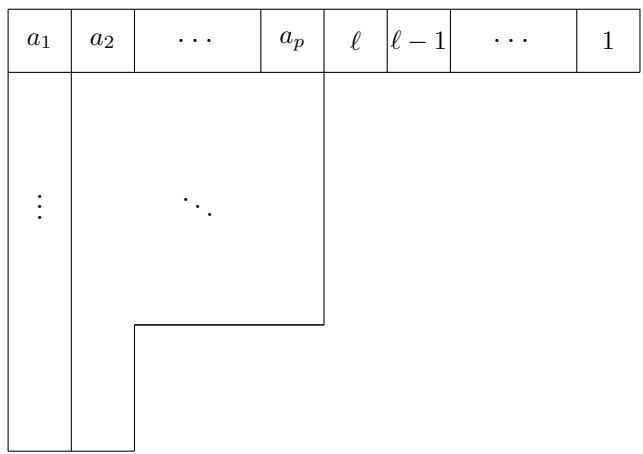
\end{center}

\begin{proof}[Proof of Theorem \ref{thm:Qklincomb}]
We proceed by strong induction on $k$, which is the number of cells in the first row of $\lambda$. For the base step when $k=1$, the partition $\lambda$ is a column partition 
\[
\lambda = [\underbrace{1,1, \dots, 1}_{a_1 \text{ copies of } 1 \text{'s}}].
\]
In this case, for $m=0$, $E_0(\lambda) = 1 = \binom{1}{0} S_0(-2) \sigma_0(a_1)$. For $m=1$, we have $E_1(\lambda) = a_1$ and the right-hand-side sum is 
\[
\binom{1}{1} S_1(-1) \sigma_0(a_1) + \binom{0}{0} S_0(-1) \sigma_1(a_1) = a_1.
\]
For $m \ge 2$, which is greater than $k$, we expect $E_m(\lambda)$ to be zero. We will show that the formula works in this case as well. Since $k=1$, the term $\sigma_i(a_1, \dots, a_k) = \sigma_i(a_1)$ vanishes for all $i \ge 2$. Thus, the right hand side equals
\[
\binom{1}{m} S_m(m-2) \sigma_0(a_1) + \binom{0}{m-1} S_{m-1}(m-2) \sigma_1(a_1),
\]
which is zero as we expected. Therefore, the claim is true for the base step.

For the inductive step, suppose that $\lambda_1 = p+\ell$ and $\lambda_2 = p$ where $p$ and $\ell$ are non-negative integers. (See Figure \ref{fig:ProofThmSInd}.) In this case, $k = p+\ell$. First, we consider the case in which $p=0$, which is when $\lambda$ is a partition of one part, $\lambda = [\ell] = [k]$. In this case, for each integer $m \ge 0$, we obtain 
\[
E_m(\lambda) = k(k-1) \cdots (k-m+1) = \binom{k}{m} \cdot m!.
\]
The right-hand-side expression is
\footnotesize
\[
\sum_{i=0}^m \binom{k-i}{m-i} S_{m-i} (m-k-1) \sigma_i(k,k-1, \dots, 1) = \binom{k}{m} \cdot \sum_{i=0}^m \binom{m}{i} S_{m-i}(m-k-1) S_i(k)
\] \normalsize
which is precisely $\binom{k}{m} \cdot m!$ by Corollary \ref{c:EmRow}.

From now on, suppose $p \ge 1$. Let $\wt{\lambda}$ denote the partition obtained by removing the first (leftmost) column from $\lambda$ and let $\wh{\lambda}$ denote the partition obtained by removing the first (uppermost) row from $\wt{\lambda}$. We note that the hook lengths of the first row of $\wt{\lambda}$ are $a_2, a_3, \dots, a_p, \ell, \ell - 1, \dots, 1$. The hook lengths of the first row of $\wh{\lambda}$ are $a_2-(\ell+1), a_3-(\ell+1), \dots, a_p - (\ell + 1)$. (The latter list is empty when $p=1$.)

By Corollary \ref{cor:EmFormula}, the quantity $E_m(\lambda)$ is the sum of the products of $m$ hook lengths of the form
\[
h_{i_1,i_1} \cdot h_{i_2, i_2+1} \cdot ~\cdots ~\cdot h_{i_m, i_m+(m-1)}
\]
where each $h_{i,j}$ is the hook length in $\lambda$ of the $(i,j)$-cell. We can split the summands of this sum into two groups: the first with $h_{i_1, i_1} = h_{1,1} = a_1$, and the second with $h_{i_1, i_1} \neq h_{1,1}$. In the first case, the rest of the product $h_{i_2, i_2+1} \cdot ~\cdots ~\cdot h_{i_m, i_m+(m-1)}$ is actually the product of hook lengths in an excited diagram of $[m-1]$ inside $\wt{\lambda}$. In the second case, the product itself is the product of hook lengths in an excited diagram of $[m]$ inside $\wh{\lambda}$. This gives the following recursion:
\[
E_m(\lambda) = a_1 \cdot E_{m-1}(\wt{\lambda}) + E_m(\wh{\lambda}).
\]
Note that the numbers of cells in the first rows of $\wt{\lambda}$ and $\wh{\lambda}$ are $k-1$ and $k-\ell-1$, respectively, which are both less than $k$. Thus, we may use the induction hypothesis for $E_{m-1}(\wt{\lambda})$ and $E_m(\wh{\lambda})$.

Hence,
\footnotesize
\begin{align*}
&E_m(\lambda) = a_1 \cdot \sum_{i=0}^{m-1} \binom{p+\ell-1-i}{m-1-i} S_{m-1-i}(m-1-(p+\ell-1)-1) \sigma_i(a_2, \dots, a_p, \ell, \dots, 1) \\
&\hphantom{E_m(\lambda) = } + \sum_{i=0}^m \binom{p-1-i}{m-i} S_{m-i}(m-(p-1)-1) \sigma_i(a_2-(\ell+1), \dots, a_p-(\ell+1)) \\
&= \sum_{i=0}^{m-1} \binom{p+\ell-1-i}{m-1-i} S_{m-1-i}(m-p-\ell-1) \\
&\hphantom{= \sum_{i=0}^{m-1} \binom{p+\ell-1-i}{m-1-i}} \cdot \left(\sigma_{i+1}(a_1, \dots, a_p,\ell, \dots, 1) - \sigma_{i+1}(a_2, \dots, a_p, \ell, \dots, 1) \right) \\
&\hphantom{=} + \sum_{i=0}^m \binom{p-1-i}{m-i} S_{m-i}(m-p) \sigma_i (a_2-(\ell+1), \dots, a_p - (\ell+1)) \\
&= \sum_{i=0}^m \binom{p+\ell-i}{m-i} S_{m-i}(m-p-\ell-1) \cdot \left( \sigma_i(a_1, \dots, a_p,\ell,\dots, 1) - \sigma_i(a_2, \dots, a_p, \ell, \dots, 1) \right) \\
&\hphantom{=} + \sum_{i=0}^m \binom{p-1-i}{m-i} S_{m-i}(m-p) \cdot \sigma_i(a_2-(\ell+1), \dots, a_p-(\ell+1)).
\end{align*}
\normalsize
The last equality is obtained simply by an index shift of $i$. We now have
\begin{align*}
E_m(\lambda) &= \left[ \sum_{i=0}^m \binom{p+\ell-i}{m-i} S_{m-i}(m-(p+\ell)-1) \cdot \sigma_i(a_1, \dots, a_p, \ell, \dots, 1) \right] \\
& \hphantom{=} - \left[ \sum_{i=0}^m \binom{p+\ell-i}{m-i} S_{m-i}(m-p-\ell-1) \cdot \sigma_i(a_2, \dots, a_p, \ell, \dots, 1) \right] \\
& \hphantom{=} + \left[ \sum_{i=0}^m \binom{p-1-i}{m-i} S_{m-i}(m-p) \cdot \sigma_i(a_2-(\ell+1), \dots, a_p-(\ell+1)) \right].
\end{align*}
Note that the first term on the right hand side is precisely what we want to equal to $E_m(\lambda)$. Therefore, it suffices to prove the equality
\begin{align*} \tag{$\triangle$}
&\left[ \sum_{i=0}^m \binom{p+\ell-i}{m-i} S_{m-i}(m-p-\ell-1) \cdot \sigma_i(a_2, \dots, a_p, \ell, \dots, 1) \right] \\
&= \left[ \sum_{i=0}^m \binom{p-1-i}{m-i} S_{m-i}(m-p) \cdot \sigma_i(a_2-(\ell+1), \dots, a_p-(\ell+1)) \right]. 
\end{align*}

Fortunately, we have done the difficult algebraic work in Section \ref{ss:algpropci}. To show the equality ($\triangle$), we only need to apply lemmas we proved earlier to transform the left hand side $\LHS_{(\triangle)}$ to the right hand side $\RHS_{(\triangle)}$. To start with, Lemma \ref{l:sigma_concatenate_l} gives $\LHS_{(\triangle)} = $
\[
\sum_{j=0}^m \left[ \sigma_j(a_2, \dots, a_p) \cdot \left( \sum_{i=j}^m \binom{p+\ell-i}{m-i} \binom{\ell}{i-j} S_{m-i}(m-p-\ell-1) S_{i-j}(\ell) \right)\right].
\]
Lemma \ref{l:sigma_plus_d} gives $\RHS_{(\triangle)} = $
\footnotesize
\[
\sum_{j=0}^m \left[ \sigma_j(a_2, \dots, a_p) \cdot \left( \sum_{i=j}^m (-1)^{i-j} \binom{p-1-i}{m-i} \binom{p-1-j}{i-j} S_{m-i}(m-p) \cdot (\ell+1)^{i-j} \right) \right].
\]
\normalsize
It suffices to show that, for each $0 \le j \le m$,
\begin{align*}
& \sum_{i=j}^m \binom{p+\ell-i}{m-i} \binom{\ell}{i-j} S_{m-i}(m-p-\ell-1) S_{i-j}(\ell) \\
& = \sum_{i=j}^m (-1)^{i-j}\binom{p-1-i}{m-i} \binom{p-1-j}{i-j} S_{m-i}(m-p) \cdot (\ell+1)^{i-j}.
\end{align*}
By shifting indices ($i \mapsto i+j$), the equation above is equivalent to
\begin{align*}
& \sum_{i=0}^{m-j} \binom{p+\ell-i-j}{m-i-j} \binom{\ell}{i} S_{m-i-j}(m-p-\ell-1)S_i(\ell) \\
& = \sum_{i=0}^{m-j} (-1)^i \binom{p-1-i-j}{m-i-j} \binom{p-1-j}{i} S_{m-i-j}(m-p) (\ell+1)^i.
\end{align*}
We can see that the last equation is true by substituting $M \mapsto m-j \ge 0$, $q \mapsto \ell+1$, and $r \mapsto p-j$ in Lemma \ref{l:maincidentity}. We have finished the induction and hence completed the proof of the theorem.
\end{proof}

\begin{remark} \label{rmk:PixtonsFormulas}
Aaron Pixton \cite{Pix16} observed that the formula in Theorem \ref{thm:Qklincomb} can be rewritten as
\[
E_m(\lambda) = \left[ \frac{(1+a_1 X) \cdots (1+a_k X)}{(1+X)(1+2X) \cdots (1+(k-m)X)}\right]_{X^m}.
\]
Also,
\[
E_{\big[\underbrace{1,1,\dots,1}_{m}\big]}(\lambda) = \left[ \frac{(1-X)(1-2X) \cdots (1-(k+m-2)X)}{(1-a_1X) \cdots (1-a_kX)}\right]_{X^m}.
\]
\end{remark}

Next, we show that the SONAR technique always works. This is Theorem \ref{thm:SONAR}, which we reproduce here.

\SONAR*

\begin{proof}[Proof of Theorem \ref{thm:SONAR}]
Given the sequence $\{E_m(\lambda)\}_{m=0}^{\infty}$, we can recover $k = \lambda_1$, the number of cells in the first row easily by searching for the first zero excitation factor $E_m(\lambda)$. Indeed, $E_m(\lambda)$ is positive if and only if $m \le k$. Because the $i$-th entry of the sequence is $E_{i-1}(\lambda)$, if the first zero in the sequence appears in the $i$-th entry where $i \ge 3$, then $k = i-2$. If the first zero appears in the second entry (that is, $E_1(\lambda) = 0$), then $\lambda$ is the empty partition. By convention, the first entry $E_0(\lambda) = 1$ is never zero. 

Now that we have recovered $k$, we continue to recover the hook lengths $a_1 > \dots > a_k$ of the first row of $\lambda$. The formula in Theorem \ref{thm:Qklincomb} can be rewritten as
\[
E_m(\lambda) = \sum_{i=0}^m \gamma(m,i) \cdot \sigma_i(a_1, \dots, a_k)
\]
where $\gamma(m,i) = \binom{k-i}{m-i} S_{m-i}(m-k-1)$. For convenience, we will use $\sigma_i$ to refer to $\sigma_i(a_1, \dots, a_k)$. Consider the $(k+1) \times (k+1)$-matrix $\cA_{k+1} := [\alpha_{i,j}]_{1 \le i, j \le k+1}$, where
\[
\alpha_{i,j} = \begin{cases}
\gamma(i-1,j-1) & \text{if } i \ge j \\
0 & \text{otherwise.}
\end{cases}
\]
The formula above shows that
\[
\cA_{k+1} \cdot \begin{bmatrix}
\sigma_0 \\
\sigma_1 \\
\vdots \\
\sigma_k
\end{bmatrix}
= \begin{bmatrix}
E_0(\lambda) \\
E_1(\lambda) \\
\vdots \\
E_k(\lambda)
\end{bmatrix}.
\]
Note that all the entries in the matrix $\cA_{k+1}$ are known, as they depend only on $k$. Furthermore, it is lower triangular with all the diagonal entries being $1$. Therefore, $\cA_{k+1}$ is invertible. As we know $\{E_m(\lambda)\}_{m=0}^\infty$, we can recover $\sigma_0, \dots, \sigma_k$ from the equation
\[
\begin{bmatrix}
\sigma_0 \\
\sigma_1 \\
\vdots \\
\sigma_k
\end{bmatrix}
= \cA_{k+1}^{-1} \cdot \begin{bmatrix}
E_0(\lambda) \\
E_1(\lambda) \\
\vdots \\
E_k(\lambda)
\end{bmatrix}.
\]
Consequently, we recover the polynomial
\[
A(X) := (X-a_1)(X-a_2) \cdots (X-a_k) = \sum_{i=0}^k (-1)^i \sigma_i \cdot X^{k-i}.
\]
Since $a_1, a_2, \dots, a_k$ are strictly decreasing, the roots of $A(X)$ are pairwise distinct positive integers. Therefore, solving $A(X) = 0$ recovers all the hook lengths in the first row of $\lambda$, which in turn recover the whole partition.
\end{proof}

To demonstrate the SONAR technique in action, consider the following example.

\begin{ex} \label{ex:1,8,28,40}
Suppose that $\lambda$ is a partition which satisfies 
\[
\{E_m(\lambda)\}_{m=0}^{\infty} = (1,8,28,40,0,0,\dots).
\]
We will recover $\lambda$ from the sequence above using SONAR.

First, because there are four positive terms in the sequence, we have $k=3$. We calculate the matrix $\cA_4$ as follows.
\[
\cA_4 = \begin{bmatrix}
\gamma(0,0) & 0 & 0 & 0 \\
\gamma(1,0) & \gamma(1,1) & 0 & 0 \\
\gamma(2,0) & \gamma(2,1) & \gamma(2,2) & 0 \\
\gamma(3,0) & \gamma(3,1) & \gamma(3,2) & \gamma(3,3)
\end{bmatrix} = \begin{bmatrix*}[r]
1 & 0 & 0 & 0 \\
-3 & 1 & 0 & 0 \\
1 & -1 & 1 & 0 \\
0 & 0 & 0 & 1
\end{bmatrix*}.
\]
Therefore,
\[
\cA_4^{-1} = \begin{bmatrix}
1 & 0 & 0 & 0 \\
3 & 1 & 0 & 0 \\
2 & 1 & 1 & 0 \\
0 & 0 & 0 & 1
\end{bmatrix}.
\]
We can then recover $\sigma_i$ for $i = 0,1,2,3$:
\[
\begin{bmatrix}
\sigma_0 \\
\sigma_1 \\
\sigma_2 \\
\sigma_3
\end{bmatrix}
= \begin{bmatrix}
1 & 0 & 0 & 0 \\
3 & 1 & 0 & 0 \\
2 & 1 & 1 & 0 \\
0 & 0 & 0 & 1
\end{bmatrix} \cdot \begin{bmatrix}
1 \\
8 \\
28 \\
40
\end{bmatrix} = \begin{bmatrix}
1 \\
11 \\
38 \\
40
\end{bmatrix}.
\]
Thus, $A(X) = X^3 - 11X^2 + 38X - 40 = (X-5)(X-4)(X-2)$. We conclude that $a_1 = 5$, $a_2 = 4$, and $a_3 = 2$. Therefore, $\lambda = [3,3,2]$.
\end{ex}

Now that we have developed the SONAR technique, in the next section we will use it to explore the asymptotic behavior of the function $G(n)$ as $n \rightarrow \infty$.

\bigskip

\section{Asymptotic Behavior of $G(n)$} \label{sec:asympGn}
In this section, we will show that $n-G(n) = O(n/\log n)$ as $n \rightarrow \infty$. Our main tool in the proof is the SONAR technique we developed in Section \ref{sec:SONAR}.

\begin{restatable}{thm}{Gnsimn} 
\label{thm:Gnsimn}
We have
\[
\lim_{n \rightarrow \infty} \frac{G(n)}{n} = 1
\]
and $n-G(n) = O(n/\log n)$ as $n \rightarrow \infty$.
\end{restatable}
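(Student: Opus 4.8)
The plan is to prove the two claims in one stroke: it suffices to show $n - G(n) = O(n/\log n)$, since that immediately gives $G(n)/n \to 1$. So the goal is to produce, for all sufficiently large $n$, a value $k = k(n)$ with $n - k = O(n/\log n)$ for which MRC holds for $(n,k)$; then $G(n) \ge k(n)$ and we are done. The mechanism for establishing MRC will be the SONAR technique (Theorem \ref{thm:SONAR}), suitably adapted to the setting where we only have the restricted data $\wh{M}_k(\lambda)$ rather than all of $\{E_m(\lambda)\}_{m \ge 0}$.

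The key observation connecting the two is the following. Fix $\lambda \vdash n$ and let $a_1 > \dots > a_k$ be the first-row hook lengths, so $a_1 = \lambda_1 + (\ell(\lambda) - 1)$ where $\ell(\lambda)$ is the number of parts and $k = \lambda_1$. By Corollary \ref{cor:EmFormula}, knowing $\wh{M}_j(\lambda)$ for a single $j$ (together with $n$) is equivalent to knowing $N(\lambda/[j])$, hence equivalent to knowing the single number $E_j(\lambda) = \frac{n!}{(n-j)!\,N(\lambda)} N(\lambda/[j])$ — wait, one needs $N(\lambda)$, which is itself recoverable from $\wh{M}_j(\lambda)$ when $j \le \lambda_1$ since the minor $\lambda/\lambda$-of-one-box type data pins it down; more robustly, knowing $\wh{M}_k(\mu)$ and $\wh{M}_k(\nu)$ equal forces $N(\mu/\tau) = N(\nu/\tau)$ for all $\tau \vdash n-k$, and in particular for $\tau = [m]$ with $m \le k$ we get, after clearing the common combinatorial factors, equality of the \emph{ratios} $E_m/E_0$, which since $E_0 = 1$ means the individual excitation factors $E_m(\mu) = E_m(\nu)$ agree for $0 \le m \le k$. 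Running SONAR as in the proof of Theorem \ref{thm:SONAR}: the matrix $\cA_{k+1}$ is invertible, so $E_0(\mu), \dots, E_k(\mu)$ determine $\sigma_0, \dots, \sigma_k$ of the first-row hook lengths of $\mu$, hence the polynomial $A(X)$, hence $\{a_i\}$, hence $\lambda_1 = k$ and the full first row's worth of hook data — but here is the subtlety: we only recover the hook lengths, i.e. the \emph{multiset} $\{\lambda_1, \lambda_2 - 1 + \ell, \dots\}$ encoded, which actually determines $\lambda$ completely because the first-row hook lengths of a partition are $\{\lambda_i - i + \ell(\lambda)\}_{i=1}^{\ell(\lambda)} \cap \{\ge \ell(\lambda)\} \cup \dots$ — the standard fact that the first-row hook lengths are the "beta-numbers" and recover $\lambda$. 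So the entire content is: \textbf{for which $(n,k)$ does equality of $\wh{M}_k$ force equality of $E_m$ for enough values of $m$?} When $k = \lambda_1$ we get $E_0, \dots, E_{\lambda_1}$, which is exactly enough; when $k$ is larger than $\lambda_1$ the minor data is "too deep" and loses information. Thus MRC should hold for $(n,k)$ as soon as $k$ is at most $\lambda_1$ for \emph{every} relevant $\lambda$ — but not every $\lambda \vdash n$ has $\lambda_1 \ge k$, so the real argument must handle partitions with short first rows separately, presumably by a symmetric/conjugate argument (SONAR applied to columns) or by noting such partitions are few and rigid.

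The heart of the asymptotic estimate, then, is a counting/rigidity argument: one wants to choose $k = n - \Theta(n/\log n)$ so that any two partitions $\mu \ne \nu$ of $n$ with $\wh{M}_k(\mu) = \wh{M}_k(\nu)$ are forced to coincide. The natural route is to show that for such $k$, any partition $\mu \vdash n$ with $\mu_1 \ge k$ or $\mu_1^t \ge k$ is already determined by its minor multiset by the SONAR argument above, and then to show that the partitions with \emph{both} $\mu_1 < k$ and $\ell(\mu) < k$ — i.e. partitions fitting inside no long row or column — form a set on which $\wh{M}_k$ is injective for a more elementary reason (e.g. because $n - k$ is so small that $\mu/\tau$-data is essentially complete, or by induction on $n$). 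Getting the exact rate $n/\log n$ will come from balancing: partitions of $n$ with first row and first column both bounded by $r$ have size at most roughly $r^2$-ish but with the hook-length/SYT weighting the relevant threshold is governed by $\log(\#\{\text{partitions}\}) \asymp \sqrt{n}$ versus the precision of the excitation-factor linear system, and the "$\log n$" denominator should emerge from the fact that to separate $p(n) = e^{\Theta(\sqrt n)}$ partitions one needs $\Theta(\sqrt n)$ bits, each minor of depth $k$ near $n$ supplying $O(1)$ usable constraints scaled by $\log$ of the hook lengths $\le n$. I expect the main obstacle to be precisely this last calibration — proving the \emph{lower} bound $G(n) \ge n - O(n/\log n)$ requires showing SONAR still succeeds with only the truncated data available at depth $k$, i.e. controlling how much of the sequence $\{E_m\}$ survives in $\wh{M}_k$, and then pushing a pigeonhole/dimension count to the sharp rate rather than a cruder polynomial bound. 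The subsidiary steps (invertibility of $\cA_{k+1}$, recovery of $\lambda$ from first-row hook lengths, and the $k \to k'$ monotonicity from Proposition \ref{p:kthenk'}) are already in hand from the earlier sections and are routine.
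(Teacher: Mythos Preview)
Your proposal has a genuine gap and, in addition, gets the direction of the basic SONAR reduction backwards.

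First, the backwards part. From $\wh{M}_k(\lambda)$ (together with Proposition~\ref{p:kthenk'}) you recover $N(\lambda/\tau)$ for all $\tau$ with $|\tau| \le n-k$, hence $E_m(\lambda)$ for $0 \le m \le n-k$. Running SONAR on $\lambda$ requires $E_m(\lambda)$ for $0 \le m \le \lambda_1$, so the ``easy'' partitions are those with $\lambda_1 \le n-k$ (short first rows), not with $\lambda_1 \ge k$ as you wrote. Symmetrically, partitions with $\ell(\lambda) \le n-k$ are handled by conjugate SONAR. With $k = n - O(n/\log n)$, the genuinely hard partitions are those with \emph{both} $\lambda_1$ and $\ell(\lambda)$ exceeding $n-k$; the hook $[\lfloor n/2\rfloor,1,\dots,1]$ already shows that this class is nonempty and not at all rigid in the way you hope.

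Second, the actual gap. Your proposal offers no mechanism for these hard partitions, and the information-theoretic heuristic you sketch (balancing $\log p(n)\asymp\sqrt{n}$ against ``bits per minor'') has nothing to do with the true source of the $n/\log n$ rate. The paper's argument supplies the two missing ideas. (i) A harmonic-series lemma (Lemma~\ref{l:rectangle}): every $\lambda\vdash n$ admits a rectangular $\mu=[a^b]$ with $|\mu|<2n/\log n$ such that $\mu\nleqslant\lambda$ but its $1$-minor $\wt\mu\le\lambda$; this is where $n/\log n$ actually comes from, via $\sum_{d\le m}\lfloor m/d\rfloor\approx m\log m$. (ii) A \emph{double SONAR}: the rectangle $\xi=[(a{-}1)^{b-1}]$ splits $\lambda$ into a right piece $\lambda^R$ and a bottom piece $\lambda^B$, and probing with minors of the form $\rho^{u,v}=[a^u,(a{-}1)^{b-1-u},v]$ (all of size $\le ab-1<2n/\log n\le n-k$) lets one extract the full sequences $\{E_m((\lambda^R)^t)\}$ and $\{E_m(\lambda^B)\}$, then SONAR each piece. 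Neither idea is present in your outline, and without them the argument does not go through.
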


Before proving Theorem \ref{thm:Gnsimn}, we show the following lemma.

\begin{lemma}
\label{l:rectangle}
Let $\lambda$ be a partition of a positive integer $n \ge 2$. Then, there exists a partition $\mu = [\underbrace{a,a,\dots,a}_{b}]$ of a positive integer $ab$ such that $\mu \nleqslant \lambda$ but $\wt{\mu} \le \lambda$, where $\wt{\mu}$ is the unique $1$-minor of $\mu$, and such that
\[
|\mu| = ab < \frac{2n}{\log n}.
\]
\end{lemma}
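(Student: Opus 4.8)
The plan is to reduce the statement to an explicit inequality relating $n$ and $\log n$, after first pinning down which rectangles $\mu=[a^b]$ can possibly satisfy the two requirements. A rectangle has a single removable corner cell, so its unique $1$-minor is $\wt\mu=[a^{b-1},a-1]$, and a short unwinding of the definitions shows that $\mu\nleqslant\lambda$ and $\wt\mu\le\lambda$ hold simultaneously exactly when $\lambda_b=a-1$ and $\lambda_{b-1}>\lambda_b$ (with the convention $\lambda_0:=+\infty$); equivalently, $b$ is the first row of $\lambda$ having exactly $a-1$ cells. Call such a pair \emph{admissible}. The point of this reformulation is an observation I would establish first: for every index $i\ge 1$ (with $\lambda_i=0$ for $i>\ell(\lambda)$), letting $b$ be the first row of $\lambda$ with $\lambda_b=\lambda_i$ and $a=\lambda_i+1$, the pair $(a,b)$ is admissible and $b\le i$, so $\lambda$ has an admissible rectangle of size at most $i(\lambda_i+1)$. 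In particular, taking $i=\ell(\lambda)+1$ shows $[1^{\ell(\lambda)+1}]$ is always admissible.

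Next I would argue by contradiction: suppose every admissible rectangle has size at least $T:=2n/\log n$. By the observation, $i(\lambda_i+1)\ge T$ for all $i$, hence $\lambda_i\ge T/i-1$ for all $i$, and $\ell(\lambda)+1\ge T$, so $\ell(\lambda)\ge\lceil T\rceil-1=:K$. The essential move is to sum the lower bound for $\lambda_i$ over only the first $K$ rows, which exist since $K\le\ell(\lambda)$:
\[
n=\sum_{i\ge 1}\lambda_i\ \ge\ \sum_{i=1}^{K}\Big(\frac{T}{i}-1\Big)\ =\ T\sum_{i=1}^{K}\frac1i-K\ \ge\ T\log(K+1)-K\ >\ T\log T-T,
\]
using $\sum_{i=1}^{K}1/i\ge\int_1^{K+1}dx/x=\log(K+1)$, then $K+1=\lceil T\rceil\ge T$, and finally $K=\lceil T\rceil-1<T$.

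It then remains to substitute $T=2n/\log n$ into $n>T(\log T-1)$. Dividing by $T$ and using $n/T=\tfrac12\log n$ turns this into $\tfrac12\log n>\log T-1=\log 2+\log n-\log\log n-1$, which rearranges to
\[
\log\!\Big(\tfrac12\log n\Big)\ =\ \log\log n-\log 2\ >\ \tfrac12\log n-1,
\]
contradicting the elementary bound $\log x\le x-1$ applied with $x=\tfrac12\log n>0$ (note $n\ge 2$). Hence some admissible rectangle has size $<2n/\log n$, and that rectangle is the desired $\mu$.

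I expect the delicate point to be the calibration of the estimates rather than the combinatorics. The naive approach — summing $\lambda_i\ge T/i-1$ over all $\ell(\lambda)$ rows and bounding $\ell(\lambda)\le n$ — loses so much through the trailing $-\ell(\lambda)$ term that it produces no contradiction once $n$ is large; truncating the harmonic sum at $K\approx T$ rows is precisely what rescues the argument, and it does so with essentially no slack, since the final inequality collapses exactly to $\log x\le x-1$. One should also note that $T>e$ for all integers $n\ge 2$, so $\log T-1>0$ and the displayed chain is meaningful — though in fact this is not needed, since every step after $n>T(\log T-1)$ only multiplies or divides by positive quantities and stays valid regardless.
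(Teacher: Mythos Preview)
Your argument is correct and follows the same skeleton as the paper's proof: assume no small rectangle works, deduce $\lambda_i \gtrsim T/i$, sum a truncated harmonic series to get $n \gtrsim T\log T$, and contradict $T = 2n/\log n$. The packaging, however, is noticeably cleaner than the paper's in two respects.

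First, the paper proceeds in two steps: it shows that \emph{some} partition $\tau$ of size at most $m = \lfloor 2n/\log n\rfloor$ fails to be a minor of $\lambda$, and only afterward extracts the rectangle $\mu$ by locating a corner cell of $\tau$ outside $\lambda$. Your direct characterization of admissible rectangles --- $(a,b)$ is admissible iff $\lambda_b = a-1$ and $\lambda_{b-1} > \lambda_b$ --- lets you bypass this and immediately produce, for each $i$, an admissible rectangle of size at most $i(\lambda_i+1)$; this is both shorter and more transparent.

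Second, the paper's endgame invokes the Euler--Mascheroni constant and several ad hoc numerical bounds (e.g.\ $(\log\log n)/\log n < 1/e$, $(m+1)\log(1+1/m) < 1.4$) to manufacture the contradiction $n > 1.2n - 0.4$. Your reduction to $\log x > x-1$ with $x = \tfrac12\log n$ is tighter and constant-free; as you note, the inequality collapses with essentially no slack, which also explains why the constant $2$ in $2n/\log n$ is close to optimal for this style of argument.
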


\begin{proof}
Let $m := \lfloor 2n/\log n \rfloor$. If we show that there exists a partition $\tau$ of size at most $m$ such that $\tau \nleqslant \lambda$, then we can find a cell $X$ inside $\tau$ that is not in $\lambda$ such that all the cells to its left and all the cells above it are in $\lambda$. Then, we may choose $\mu$ to be the rectangle partition that has $X$ as its corner square. We observe that $\mu$ is not in $\lambda$, but its $1$-minor $\wt{\mu}$ is, and $|\mu| \le |\tau| \le m < 2n/\log n$. Hence, it suffices to prove the existence of a partition $\tau \nleqslant \lambda$ with $|\tau| \le m$.

Suppose, for sake of contradiction, that all partitions $\tau$ of size at most $m$ are minors of $\lambda$. Then, in particular, the partition
\[
\Bigg[ \underbrace{\left\lfloor \frac{m}{d} \right\rfloor, \dots, \left\lfloor \frac{m}{d} \right\rfloor}_{d} \Bigg]
\]
is a minor of $\lambda$ for $d = 1,2 ,\dots, m$. This shows that the $i$-th row of $\lambda$ has at least $\lfloor m/i \rfloor$ cells for all $i$. Therefore,
\begin{align*}
n &\ge m + \left\lfloor\frac{m}{2} \right\rfloor + \left\lfloor \frac{m}{3} \right\rfloor + \dots + \left\lfloor \frac{m}{m} \right\rfloor \\
&= m \cdot \left( 1 + \frac{1}{2} + \dots + \frac{1}{m} \right) - \left\{\frac{m}{2}\right\} - \left\{\frac{m}{3}\right\} - \dots - \left\{\frac{m}{m}\right\} \\
&\ge m \cdot \left( 1 + \frac{1}{2} + \dots + \frac{1}{m} \right) - \frac{1}{2} - \frac{2}{3} - \dots - \frac{m-1}{m} \\
&= (m+1) \cdot \left( 1 + \frac{1}{2} + \dots + \frac{1}{m} \right) - m \\
&> (m+1)(\log(m) + \gamma) - m \\
&= (m+1)(\log(m+1) - (1-\gamma)) + 1 - (m+1) \log\left(1+\frac{1}{m} \right), \tag{$\diamond$}
\end{align*}
where $\gamma \approx 0.5772$ (cf. \cite{OEIS16.001620}) is the Euler-Mascheroni constant.

Because $m = \lfloor 2n/\log n \rfloor$, we have the bound
\begin{align*}
(m+1) (\log(m+1) - (1-\gamma)) &> \frac{2n}{\log n} \cdot \big( \log n - \log \log n + \underbrace{\log 2 - (1-\gamma)}_{>0} \big) \\
&> 2n - \frac{2n \log \log n}{\log n}.
\end{align*}

Note that it is straightforward to check the numerical bound:
\[
\frac{\log \log n}{\log n} < \frac{1}{e} < 0.4
\]
for all integers $n \ge 2$. Therefore, we have 
\[
(m+1) (\log(m+1) - (1-\gamma)) > 1.2 n
\]
for $n \ge 2$. It is also straightforward to check that $(m+1) \log \left(1+\frac{1}{m}\right) < 1.4$ for all integers $m \ge 1$. Combining these inequalities with ($\diamond$), we have the inequality $n > 1.2 n - 0.4$, which contradicts our original assumption that $n \ge 2$.

Therefore, there must always be a partition $\mu$ of size at most $m$ that is not a minor of $\lambda$. This finishes our proof.
\end{proof}

\begin{remark}
In Lemma \ref{l:rectangle}, the constant $2$ in $\frac{2n}{\log n}$ can be further sharpened. For the purpose of this paper, however, the constant is not crucial to the asymptotic result in Theorem \ref{thm:Gnsimn} that $n-G(n) = O(n/\log n)$. Interested readers may calculate the optimal constant $C>0$ for which we can replace the bound of the size of $\mu$ in Lemma \ref{l:rectangle} with $|\mu| \le \frac{C \cdot n}{\log n}$.
\end{remark}

Now, we will prove Theorem \ref{thm:Gnsimn}.

\begin{center}
\begin{figure}
\begin{tikzpicture}
\begin{scope}[shift = {(0,0)}]
\ytableausetup{notabloids}
\ytableausetup{mathmode, boxsize=2.0em}
\node (n) {\ytableausetup{nosmalltableaux}
\ytableausetup{notabloids}
\ydiagram[*(gray!50)]{4+7,4+5,4+5,4+4,0,3,3,3,3,2,1}
*[*(gray!20)]{3+1,3+1,3+1,3+1,3}
*[*(white)]{3,3,3,3}};

\def \a {0.84};
\def \r {\a/4};

\draw[dashed, line width = 2pt] ({-9.5*\a +3.5},{1.0*\a-0.425}) -- ({-5.5*\a +3.5},{1.0*\a-0.425}) -- ({-5.5*\a +3.5},{6.0*\a-0.425});

\draw[decoration={brace,mirror,raise=5pt},decorate]
  ({-9.5*\a +3.5},{6.0*\a-0.425}) -- node[left=6pt] {$b$} ({-9.5*\a +3.5},{1.0*\a-0.425});

\draw[decoration={brace,mirror,raise=5pt},decorate]
  ({-5.5*\a +3.5},{6.0*\a-0.425}) -- node[above=6pt] {$a$} ({-9.5*\a +3.5},{6.0*\a-0.425});
\end{scope}

\def \a {0.84};

\begin{scope}[shift = {({-5.5*\a/6-4.5*\a+\a/12},{19*\a/12-32*\a/405})}]
\boat[]
\draw[->,thick] ({7*\a/6},{\a/3}) -- ({10*\a/6},{\a/3});
\end{scope}

\begin{scope}[shift = {({-11*\a/12-1.5*\a+32*\a/405},{5.5*\a})}, yscale = -1, rotate = 90]
\boat[]
\draw[->,thick] ({7*\a/6},{\a/3}) -- ({10*\a/6},{\a/3});
\end{scope}

\end{tikzpicture}
\caption{A schematic diagram for the proof of Theorem \ref{thm:Gnsimn}} \label{fig:doubleSONAR}
\end{figure}
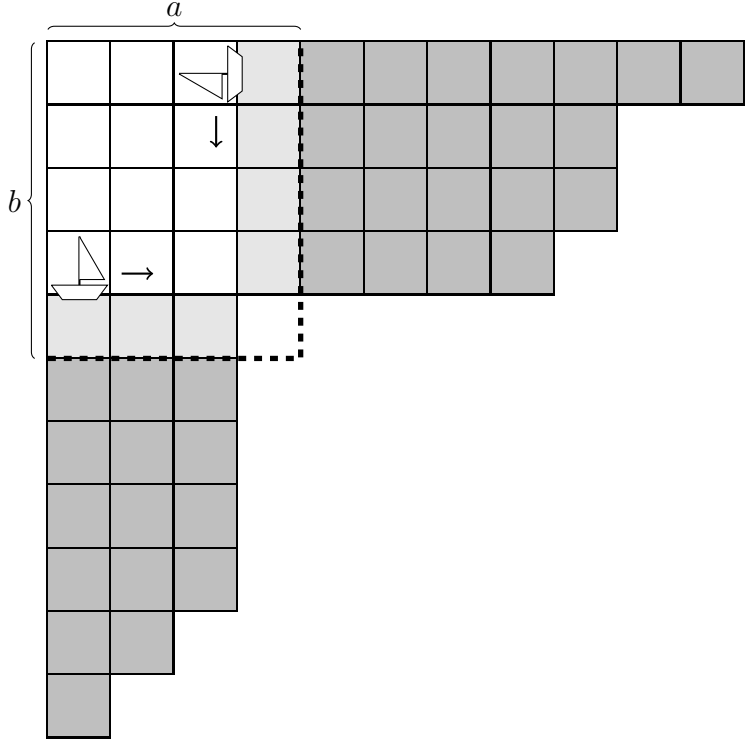
\end{center}

\begin{proof}[Proof of Theorem \ref{thm:Gnsimn}]
We will show that
\[
G(n) > n - \frac{2n}{\log n}
\]
for every integer $n \ge 2$, by proving that multiset-reconstructibility holds for the pair $\left(n, n- \lfloor 2n/\log n \rfloor \right)$. In other words, suppose $\lambda$ is an initially unknown partition of a known positive integer $n \ge 2$. We will show that if we know $N(\lambda/\mu)$ for all partitions $\mu$ with $|\mu| \le \lfloor 2n/\log n \rfloor$, then we can recover $\lambda$.

Our strategy is as follows. By Lemma \ref{l:rectangle}, there exists a rectangle partition $\rho = [a,a,\dots, a]$ of size at most $\lfloor 2n/\log n \rfloor$ with the property that $\rho$ is not a minor of $\lambda$, but its unique $1$-minor $\wt{\rho}$ is. We find such a partition explicitly by considering all the rectangle partitions $\rho$ of size at most $\lfloor 2n/\log n \rfloor$ and calculate $N(\lambda/\rho)$, for which we have the information. Then, we list all the rectangle partitions $\rho$ for which $N(\lambda/\rho)$ is zero. A minimal partition $\rho$ (with respect to $\le$) from the list will be the desired rectangle partition.

Next, let $\xi$ denote the rectangle partition whose width and height are both $1$ less than the respective side lengths of $\rho$. In Figure \ref{fig:doubleSONAR}, $\rho$ is shown as the rectangle partition bounded by the dashed segments, and $\xi$ is shown as the rectangle of white cells. The partition $\xi$ splits the cells of $\lambda$ into three parts: (1) the cells that are inside $\xi$, (2) the cells that are to the right of $\xi$, and (3) the cells that are below $\xi$. The cells to the right of $\xi$ form their own partition, which we denote $\lambda^{R}$. Analogously, the cells below $\xi$ also form their own partition, which we denote $\lambda^B$. Since $\xi$ is determined, in order to recover $\lambda$ it suffices to recover $\lambda^{R}$ and $\lambda^{B}$. To do so, we use SONAR. By taking $\mu$ to be a certain minor of $\wt{\rho}$, we will be able to obtain the sequences $\{E_m(\lambda^{B})\}_{m=0}^{\infty}$ and $\{E_m((\lambda^{R})^t)\}_{m=0}^{\infty}$, where $(\lambda^{R})^t$ denotes the conjugate of $\lambda^{R}$. Instead of doing single SONAR on $\lambda$, we will do double SONAR on the two partitions $\lambda^B$ and $(\lambda^{R})^t$ simultaneously. See Figure \ref{fig:doubleSONAR} for a schematic diagram for this method. As a result, we will recover $\lambda$.

As we described above, let $\rho = [a,a, \dots, a]$ be a rectangle partition of $b \ge 1$ parts with the property that $\wt{\rho} = [\underbrace{a, \dots, a}_{b-1}, a-1] \le \lambda$ but $\rho \nleqslant \lambda$ and that the size of $\rho$ is $ab \le \lfloor 2n/\log n \rfloor$. Following the description above, we let $\xi$ be the rectangle partition $[a-1, a-1, \dots, a-1]$ of $b-1$ parts. For convenience, we will use the notation $\rho^{u,v}$ to denote the partition
\[
\rho^{u,v} = \big[ \underbrace{a,\dots, a}_{u}, \underbrace{a-1, \dots, a-1}_{b-u-1},v \big]
\]
for non-negative integers $u \le b-1$ and $v \le a-1$. Note that 
\[
\xi = \rho^{0,0} \le \rho^{u,v} \le \rho^{b-1,a-1} = \wt{\rho}
\]
and therefore, the values $N(\lambda/\rho^{u,v})$ are known for all $u \le b-1$ and $v \le a-1$. Moreover, we know that $N(\lambda/\rho^{u,v}) >0$ for such $u$ and $v$.

Before doing double SONAR, let us consider a special case. If $b=1$, the partition $\wt{\rho}$ is actually the first row of $\lambda$ itself. This means that $\lambda_1 = a-1 \le \lfloor 2n/\log n\rfloor -1$. Therefore, not only can we recover $\lambda_1$, but we can recover $N(\lambda/[m])$ for all non-negative integers $m$. Consequently, we can recover the whole sequence $\{E_m(\lambda)\}_{m=0}^{\infty}$:
\[
E_m(\lambda) = \begin{cases}
\frac{N(\lambda/[m])}{N(\lambda)} \cdot \frac{n!}{(n-m)!} & \text{for } m \le a-1 \\
0 & \text{for } m \ge a.
\end{cases}
\]
In this case, of course, we do not need double SONAR, doing SONAR on $\lambda$ itself can recover the whole partition. Similarly, if $a=1$, we have that $(\lambda^t)_1 = b-1 \le \lfloor 2n/\log n \rfloor -1$. Doing SONAR on $\lambda^t$ can recover the whole $\lambda^t$. Therefore, we will assume from now on that $a,b \ge 2$. In particular, $\lambda^R$ and $\lambda^B$ are non-empty.

Let $|\lambda^R| = n^R$ and $|\lambda^B| = n^B$. We note that the integers $n^R$ and $n^B$ are initially unknown. Nevertheless, we know by counting the number of cells that
\[
n^R + n^B = n-(a-1)(b-1).
\]

The skew shape $\lambda/\rho^{u,v}$ consists of two separate skew shapes $\lambda^R/[u]^t$ and $\lambda^B/[v]$, of sizes $n^R-u$ and $n^B-v$, respectively. Therefore,
\begin{align*}
N(\lambda/\rho^{u,v}) &= \binom{n^R+n^B-u-v}{n^R-u} \cdot N(\lambda^R/[u]^t) N(\lambda^B/[v]) \\
&= \binom{n-(a-1)(b-1)-u-v}{n^R-u} \cdot N((\lambda^R)^t/[u]) N(\lambda^B/[v]).
\end{align*}
The left hand side $N(\lambda/\rho^{u,v})$ of the above equation is a known quantity. To use SONAR on $(\lambda^R)^t$ and $\lambda^B$, we want to determine $N((\lambda^R)^t/[u])$ and $N(\lambda^B/[v])$ for all non-negative integers $u$ and $v$. However, the binomial coefficient in the equation remains unknown at this point, because of the unknown $n^R$. The next step is to recover $n^R$ and $n^B$, so that the binomial coefficient becomes known.

Because $b \ge 2$, we can plug in $u=1$ in the formula above. Also, because $(\lambda^R)^t$ is non-empty, $N((\lambda^R)^t/[1]) = N((\lambda^R)^t/[0]) = N((\lambda^R)^t)$. Therefore,
\[
N(\lambda/\rho^{1,0}) = \binom{n-(a-1)(b-1)-1}{n^R-1} \cdot N((\lambda^R)^t) N(\lambda^B).
\]
This gives
\[
\frac{N(\lambda/\rho^{0,0})}{N(\lambda/\rho^{1,0})} = \frac{n-(a-1)(b-1)}{n^R}
\]
and therefore
\[
n^R = (n-(a-1)(b-1)) \cdot \frac{N(\lambda/\rho^{1,0})}{N(\lambda/\rho^{0,0})}.
\]
Since all the quantities in the right hand side of the equation above are known, we have recovered $n^R$, as well as $n^B$. As a result, we have also recovered
\[
N((\lambda^R)^t/[u]) \cdot N((\lambda^B)/[v])
\]
for all non-negative integers $u \le b-1$ and $v \le a-1$. In particular, setting $u=v=0$ implies that $N((\lambda^R)^t) \cdot N(\lambda^B)$ has been recovered. Therefore, the excitation factor
\[
E_u((\lambda^R)^t) = \frac{n^R!}{(n^R-u)!} \cdot \frac{ \left( N((\lambda^R)^t/[u]) \cdot N(\lambda^B/[0]) \right)}{ \left( N((\lambda^R)^t) \cdot N(\lambda^B) \right)}
\]
is also recovered for $0 \le u \le b-1$. For $u \ge b$, we already know that $E_u((\lambda^R)^t) = 0$. Hence, we have recovered the whole sequence $\{E_m((\lambda^R)^t)\}_{m=0}^{\infty}$. Similarly, the sequence $\{E_m(\lambda^B)\}_{m=0}^{\infty}$ is also recovered. Using SONAR with $(\lambda^R)^t$ and $\lambda^B$, we can recover the shapes of both $\lambda^R$ and $\lambda^B$, and therefore, the partition $\lambda$ can be reconstructed.

We have proved that for every integer $n \ge 2$,
\[
n - \frac{2n}{\log n} < G(n) \le n.
\]
Therefore, we conclude that $\lim_{n \rightarrow \infty} \frac{G(n)}{n} = 1$ with $n-G(n) = O(n/\log n)$ as $n \rightarrow \infty$ as desired.
\end{proof}

In the next section, we will look at the difference $n-G(n)$ in more detail.

\bigskip

\section{The Difference $n-G(n)$} \label{sec:n-G(n)}
In Section \ref{sec:asympGn}, we established a sublinear upper bound for $n-G(n)$. In this section, we will provide a lower bound for $n-G(n)$ and present computation results for a certain number of known values of $G(n)$. Computations of $G(n)$ become challenging as $n$ grows, because of the rapid growth rate of $p(n)$, the number of partitions of $n$. By a famous result due to Hardy and Ramanujan \cite{HR1918}, and independently due to Uspensky \cite{Uspen1920}, the function $p(n)$ has the asymptotic growth rate of
\[
p(n) \sim \frac{1}{4\sqrt{3} n} \cdot e^{\pi \sqrt{2n/3}}.
\]
Our experience shows that direct computations of $G(n)$ when $n>100$ is not a feasible task, unless one has a machine with high computational power.

\begin{prop} \label{p:Gnlen-2}
We have $G(0) = 0$, $G(1) = 1$, and for all positive integers $n \ge 2$, $G(n) \le n-2$.
\end{prop}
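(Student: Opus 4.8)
The plan is to dispose of $n=0,1$ directly and, for $n\ge 2$, to exhibit two distinct partitions of $n$ sharing the same multiset of $(n-1)$-minors; then $G(n)\le n-2$ follows from Corollary~\ref{cor:Gexists}. For $n=0$ the empty partition is the only partition of $0$ and $k=0$ is the only admissible value, so MRC holds and $G(0)=0$; for $n=1$ the partition $[1]$ is the only partition of $1$, so MRC holds for both $k=0$ and $k=1$, giving $G(1)=1$.

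For $n\ge 2$ I would show that MRC \emph{fails} for the pair $(n,n-1)$ using the two extreme partitions $\mu=[n]$ and $\nu=[\underbrace{1,\dots,1}_{n}]$ (which we abbreviate $[1^n]$), which are distinct precisely because $n\ge 2$. Since $n-(n-1)=1$ and the only partition of $1$ is $[1]$, the multiset $\wh{M}_{n-1}(\lambda)$ of any partition $\lambda$ of $n$ consists solely of $[1]$ with multiplicity $N(\lambda/[1])$. It therefore suffices to check that $N([n]/[1])=N([1^n]/[1])$. The skew shape $[n]/[1]$ is the horizontal strip on cells $(1,2),\dots,(1,n)$, which has exactly one standard Young tableau, so $N([n]/[1])=1$; symmetrically the skew shape $[1^n]/[1]$ is the vertical strip on cells $(2,1),\dots,(n,1)$, so $N([1^n]/[1])=1$ as well. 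Hence $\wh{M}_{n-1}(\mu)=\wh{M}_{n-1}(\nu)$ while $\mu\ne\nu$, so MRC fails for $(n,n-1)$, and Corollary~\ref{cor:Gexists} yields $G(n)\le n-2$.

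The argument is a short explicit construction, so I do not expect a genuine obstacle; the only point needing care is the bookkeeping of multiplicities, i.e.\ verifying that $[1]$ is the unique $(n-1)$-minor of each of $[n]$ and $[1^n]$ and that each of the two relevant skew shapes carries exactly one SYT. As an alternative to the direct count one can invoke Corollary~\ref{cor:EmFormula} (or Theorem~\ref{thm:Qklincomb}) with $m=1$: there $E_1(\lambda)=h_{1,1}$, so $N(\lambda/[1])=\tfrac{1}{n}\,N(\lambda)\,h_{1,1}$, and since $N([n])=N([1^n])=1$ and the corner hook length is $h_{1,1}=n$ in both cases, both values equal $1$.
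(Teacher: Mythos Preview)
Your proof is correct and follows essentially the same approach as the paper: both use the pair $[n]$ and its conjugate $[1^n]=[n]^t$ to witness the failure of MRC at $(n,n-1)$ for $n\ge 2$, and both handle $n=0,1$ by uniqueness of the partition. Your write-up simply spells out the multiplicity computation $N([n]/[1])=N([1^n]/[1])=1$ in a bit more detail than the paper does.
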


\begin{proof}
Because there is a unique partition of $n$ for $n=0,1$, we can always recover the unique partition. In these trivial cases, $G(0) = 0$ and $G(1) = 1$.

Now assume $n \ge 2$. Consider the partition $\lambda = [n]$. We have that $\wh{M}_{n-1}(\lambda) = \{1 \cdot [1]\} = \wh{M}_{n-1}(\lambda^t)$, but $\lambda \neq \lambda^t$. Thus, multiset-reconstructibility (MRC) fails for $(n,n-1)$, and hence $G(n) \le n-2$.
\end{proof}

Given a partition $\lambda$ of $n$, the following lemma gives a way to check whether $\wh{M}_{n-2}(\lambda) = \wh{M}_{n-2}(\lambda^t)$ using excitation factors.

\begin{lemma} \label{l:E2iffMn-2}
Let $\lambda$ be a partition of $n \ge 2$. Then, $\wh{M}_{n-2}(\lambda) = \wh{M}_{n-2}(\lambda^t)$ if and only if $E_2(\lambda) = E_2(\lambda^t)$.
\end{lemma}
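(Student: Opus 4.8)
The plan is to show the equivalence $\wh{M}_{n-2}(\lambda) = \wh{M}_{n-2}(\lambda^t)$ iff $E_2(\lambda) = E_2(\lambda^t)$ by unpacking both sides in terms of the excitation factors $E_m$. By the definition of $\wh M_{n-2}$, the left-hand condition says $N(\lambda/\tau) = N(\lambda^t/\tau)$ for every partition $\tau$ of $2$; there are exactly two such $\tau$, namely $[2]$ and $[1,1]$. So the first step is to reduce to the two scalar equations $N(\lambda/[2]) = N(\lambda^t/[2])$ and $N(\lambda/[1,1]) = N(\lambda^t/[1,1])$. Since $N(\lambda) = N(\lambda^t)$ always (conjugation is a bijection on standard Young tableaux), Naruse's formula (Theorem \ref{thm:NaruseSSHLF}) turns $N(\lambda/[2]) = N(\lambda^t/[2])$ into $E_{[2]}(\lambda) = E_{[2]}(\lambda^t)$, i.e. $E_2(\lambda) = E_2(\lambda^t)$, and similarly $N(\lambda/[1,1]) = N(\lambda^t/[1,1])$ into $E_{[1,1]}(\lambda) = E_{[1,1]}(\lambda^t)$.

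Next I would observe that conjugation identifies the excited diagrams of $[1,1]$ in $\lambda$ with the excited diagrams of $[2]$ in $\lambda^t$ (a pebble configuration coming from the vertical domino $[1,1]$ in $\lambda$ becomes, after reflecting across the main diagonal, a pebble configuration coming from the horizontal domino $[2]$ in $\lambda^t$), and hook lengths are preserved under $(i,j)\mapsto(j,i)$ since $h^{\lambda}_{i,j} = h^{\lambda^t}_{j,i}$. Hence $E_{[1,1]}(\lambda) = E_{[2]}(\lambda^t) = E_2(\lambda^t)$ and, symmetrically, $E_{[1,1]}(\lambda^t) = E_2(\lambda)$. Therefore the equation $E_{[1,1]}(\lambda) = E_{[1,1]}(\lambda^t)$ is just $E_2(\lambda^t) = E_2(\lambda)$, the same condition as the $[2]$ equation. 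So both scalar equations collapse to the single condition $E_2(\lambda) = E_2(\lambda^t)$, which gives the claimed equivalence in both directions.

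The main obstacle is making the conjugation-symmetry claim for excited diagrams fully rigorous: one must check that the local excitation move (moving a pebble from $(i,j)$ to $(i+1,j+1)$ when $(i,j{+}1),(i{+}1,j),(i{+}1,j{+}1)$ are unoccupied and $(i{+}1,j{+}1)\in\lambda$) is invariant under the diagonal reflection, so that it induces a bijection $\cE_{[1,1]}(\lambda) \leftrightarrow \cE_{[2]}(\lambda^t)$ — which is immediate since the reflection swaps the roles of $(i,j{+}1)$ and $(i{+}1,j)$ and fixes $(i{+}1,j{+}1)$, and sends $\lambda$ to $\lambda^t$. Everything else is bookkeeping: note $n\ge 2$ guarantees $[2]$ and $[1,1]$ are both genuine (possibly non-)minors, and the convention $N(\lambda/\tau)=0$ when $\tau\nleqslant\lambda$ is consistent with $E_\tau(\lambda)=0$ in that case, so no edge cases are lost.
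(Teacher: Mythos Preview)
Your proof is correct and follows essentially the same approach as the paper. The paper's version is terser---it simply asserts that $\wh{M}_{n-2}(\lambda) = \wh{M}_{n-2}(\lambda^t)$ is equivalent to the single equation $N(\lambda/[2]) = N(\lambda^t/[2])$ (implicitly using $N(\lambda/[1,1]) = N(\lambda^t/[2])$ via conjugation of skew SYT) and then applies Naruse---whereas you make the conjugation symmetry explicit at the level of excited diagrams to collapse the $[1,1]$ equation into the $[2]$ equation; but the underlying idea is the same.
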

\begin{proof}
Observe that $\wh{M}_{n-2}(\lambda) = \wh{M}_{n-2}(\lambda^t)$ if and only if $N(\lambda/[2]) = N(\lambda^t/[2])$. By Naruse's Skew-Shape Hook Length Formula (Theorem \ref{thm:NaruseSSHLF}), $N(\lambda/[2]) = N(\lambda^t/[2])$ if and only if
\[
\frac{N(\lambda)}{n(n-1)} \cdot E_2(\lambda) = \frac{N(\lambda)}{n(n-1)} \cdot E_2(\lambda^t)
\]
that is, if and only if $E_2(\lambda) = E_2(\lambda^t)$.
\end{proof}

\begin{center}
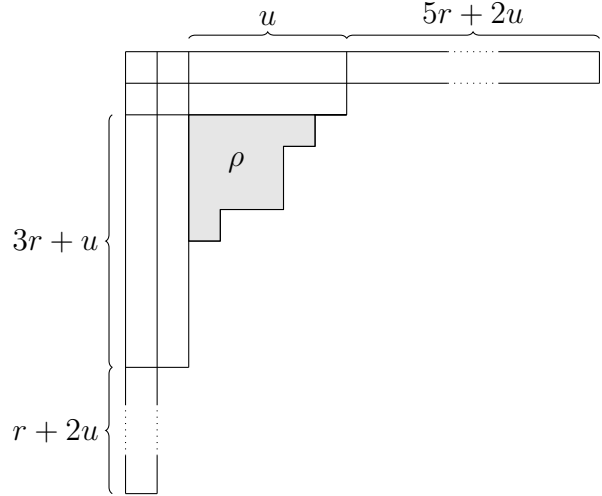
\begin{figure}
\begin{tikzpicture}
\def \a {0.42};

\draw[fill = gray!20] ({2*\a},{8*\a}) -- ({2*\a},{4*\a}) -- ({3*\a},{4*\a}) -- ({3*\a},{5*\a}) -- ({5*\a},{5*\a}) -- ({5*\a},{7*\a}) -- ({6*\a},{7*\a}) -- ({6*\a},{8*\a}) -- ({7*\a},{8*\a}) -- ({2*\a},{8*\a});

\draw (0,{-4*\a}) -- (0,{-2.8*\a});
\draw[dotted] (0,{-2.8*\a}) -- (0,{-1.2*\a});
\draw (0,{-1.2*\a}) -- (0,{10*\a});
\draw ({1*\a},{-4*\a}) -- ({1*\a}, {-2.8*\a});
\draw[dotted] ({1*\a},{-2.8*\a}) -- ({1*\a}, {-1.2*\a});
\draw ({1*\a},{-1.2*\a}) -- ({1*\a}, {10*\a});
\draw ({2*\a},{0*\a}) -- ({2*\a}, {10*\a});
\draw ({3*\a},{4*\a}) -- ({3*\a}, {5*\a});
\draw ({5*\a},{5*\a}) -- ({5*\a}, {7*\a});
\draw ({6*\a},{7*\a}) -- ({6*\a}, {8*\a});
\draw ({7*\a},{8*\a}) -- ({7*\a}, {10*\a});
\draw ({15*\a},{9*\a}) -- ({15*\a}, {10*\a});

\draw ({0*\a},{10*\a}) -- ({10.2*\a}, {10*\a});
\draw[dotted] ({10.2*\a},{10*\a}) -- ({11.8*\a}, {10*\a});
\draw ({11.8*\a},{10*\a}) -- ({15*\a}, {10*\a});
\draw ({0*\a},{9*\a}) -- ({10.2*\a}, {9*\a});
\draw[dotted] ({10.2*\a},{9*\a}) -- ({11.8*\a}, {9*\a});
\draw ({11.8*\a},{9*\a}) -- ({15*\a}, {9*\a});
\draw ({0*\a},{8*\a}) -- ({7*\a}, {8*\a});
\draw ({5*\a},{7*\a}) -- ({6*\a}, {7*\a});
\draw ({3*\a},{5*\a}) -- ({5*\a}, {5*\a});
\draw ({2*\a},{4*\a}) -- ({3*\a}, {4*\a});
\draw ({0*\a},{0*\a}) -- ({2*\a}, {0*\a});
\draw ({0*\a},{-4*\a}) -- ({1*\a}, {-4*\a});

\node at ({3.5*\a},{6.5*\a}) {$\rho$};

\draw[decoration={brace,mirror,raise=5pt},decorate]
  ({0*\a},{0*\a}) -- node[left=6pt] {$r+2u$} ({0*\a},{-4*\a});
\draw[decoration={brace,mirror,raise=5pt},decorate]
  ({0*\a},{8*\a}) -- node[left=6pt] {$3r+u$} ({0*\a},{0*\a});

\draw[decoration={brace,mirror,raise=5pt},decorate]
  ({7*\a},{10*\a}) -- node[above=6pt] {$u$} ({2*\a},{10*\a});
\draw[decoration={brace,mirror,raise=5pt},decorate]
  ({15*\a},{10*\a}) -- node[above=6pt] {$5r+2u$} ({7*\a},{10*\a});
\end{tikzpicture}
\caption{The partition $\lambda^{r,u,\rho}$ in the proof of Proposition \ref{p:Gnisn-2}. This picture shows the partition for the case when $r=1$, $u=5$, and $\rho = [4,3,3,1]$. (In this figure, we shrink the arm and the leg of the largest hook where the dotted segments are shown.)} \label{fig:lambdarurho}
\end{figure}
\end{center}

It turns out that $n-G(n) = 2$ for only finitely many values of $n$, as shown in the following proposition.

\begin{prop} \label{p:Gnisn-2}
Let $n$ be a non-negative integer. Then, $G(n) = n-2$ if and only if $2 \le n \le 11$ or $n=13$.
\end{prop}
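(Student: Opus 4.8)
\emph{Reduction to two invariants.} The plan is to recast the statement as a question about whether two numerical invariants separate partitions of $n$. By Proposition \ref{p:Gnlen-2} we have $G(n)\le n-2$ for every $n\ge 2$, so $G(n)=n-2$ is equivalent to MRC holding for $(n,n-2)$. Now $\wh{M}_{n-2}(\lambda)$ is supported on the two partitions $[2]$ and $[1,1]$ of $2$, with multiplicities $N(\lambda/[2])$ and $N(\lambda/[1,1])$. Restricting a standard Young tableau of $\lambda$ to the cells labelled $1$ and $2$ gives $N(\lambda)=N(\lambda/[2])+N(\lambda/[1,1])$, and Naruse's formula (Theorem \ref{thm:NaruseSSHLF}) gives $N(\lambda/[2])=\tfrac{N(\lambda)E_2(\lambda)}{n(n-1)}$; hence the data of $\wh{M}_{n-2}(\lambda)$ is interchangeable with the pair $\bigl(N(\lambda),E_2(\lambda)\bigr)$. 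So MRC holds for $(n,n-2)$ if and only if $\lambda\mapsto\bigl(N(\lambda),E_2(\lambda)\bigr)$ is injective on partitions of $n$, a \emph{collision} being a pair $\mu\ne\nu$ with $N(\mu)=N(\nu)$ and $E_2(\mu)=E_2(\nu)$. For a conjugate pair $\{\lambda,\lambda^t\}$ the first equality is automatic and (combining Lemma \ref{l:E2iffMn-2} with a short direct computation, namely $E_2(\lambda)=\binom n2+\sum_{(i,j)\in\lambda}(j-i)$) the second holds exactly when the content sum of $\lambda$ vanishes.

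\emph{The ``if'' direction.} For each of the finitely many $n$ in $\{2,\dots,11\}\cup\{13\}$ I would verify directly that no collision exists: enumerate the partitions of $n$, compute $N(\lambda)$ by the hook length formula and $E_2(\lambda)$ (equivalently its content sum), and check that the resulting pairs are pairwise distinct. This is a bounded computation, best presented as a short table, and it constitutes the entire ``if'' direction once the reduction above is in place.

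\emph{The ``only if'' direction.} This is the substance: one must produce, for $n=12$ and for every $n\ge 14$, two distinct partitions of $n$ sharing both $N$ and $E_2$. The cleanest collisions are non-self-conjugate pairs $\{\lambda,\lambda^t\}$ with vanishing content sum, since then only a single scalar identity remains to be checked. The family $\lambda^{r,u,\rho}$ of Figure \ref{fig:lambdarurho} is designed to supply such partitions for all large $n$: a large hook is wrapped around a fixed seed $\rho$, and its side-lengths (the quantities $5r+2u$, $3r+u$, $r+2u$, $u$) are tuned so that the positive- and negative-diagonal contributions to the content cancel for every $r$, while varying $r$, $u$, and $\rho$ realizes every $n$ beyond an explicit threshold. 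The smaller values $n=12$ and $n=14$ (where I expect no non-self-conjugate content-$0$ partition of $n$ to exist) must instead be handled individually, each by exhibiting an explicit \emph{non-conjugate} pair $\mu,\nu$ with $N(\mu)=N(\nu)$ and equal content sums; the set $\{2,\dots,11,13\}$ is precisely what survives both the infinite family and this finite sporadic search, so the argument also needs a careful boundary analysis isolating exactly $n\in\{12,14\}$ and confirming that $n=13$ is not among them.

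\emph{Main obstacle.} The crux is the ``only if'' direction, and within it the pair of demands on a collision: equality of content sums is a mild, essentially linear condition, but forcing $N(\mu)=N(\nu)$ simultaneously is an identity between products of hook lengths and is delicate to arrange by hand. Routing through conjugate pairs removes the second demand, which is exactly why the construction $\lambda^{r,u,\rho}$ is built around a dominant hook; the genuinely technical parts are (i) checking that the content vanishes uniformly in the parameters $r,u,\rho$ and that the realizable sizes cover a cofinite set, and (ii) the finite but fiddly case analysis near the threshold that pins down the precise list $\{2,\dots,11,13\}$, including the sporadic collisions needed at $n=12$ and $n=14$.
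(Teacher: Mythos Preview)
Your strategy matches the paper's: a finite verification for the ``if'' direction, and for the ``only if'' direction the conjugate-pair family $\lambda^{r,u,\rho}$ (using $E_2(\lambda)=E_2(\lambda^t)$) supplemented by a finite computer search. One pleasant addition you make is the content-sum identity $E_2(\lambda)=\binom{n}{2}+\sum_{(i,j)\in\lambda}(j-i)$; the paper does not invoke this and instead computes $E_2(\lambda^{r,u})$ and $E_2((\lambda^{r,u})^t)$ directly from the excited-diagram formula of Corollary~\ref{cor:EmFormula}, but your route is cleaner and makes transparent why a self-conjugate $\rho$ can be grafted on without disturbing the equality.

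One correction to your sketch: the threshold of the family is higher than you suggest. With $r\ge 1$ and $u\ge 0$ the base partition $\lambda^{r,u}$ has size $4(3r+2u+1)$, and to realise $|\rho|\in\{3,4,5,6\}$ (so as to hit every residue mod $4$) one needs $u\ge 3$; the paper's analysis shows the family covers precisely all $n\ge 51$. Consequently the ``sporadic'' computer search runs over $n=12$ and $14\le n\le 50$, not merely $n\in\{12,14\}$. The paper also makes no claim that the collisions at $n=12$ or $n=14$ must be non-conjugate; it simply defers all of these small cases to the search without analysing their structure.
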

\begin{proof}
To show that $G(n) = n-2$ for $2 \le n \le 11$ or $n=13$, we note that the size of partition $n$ is small enough that calculations can be done by hand, or we may use computer search. Namely, for each such $n$, we list all the partitions $\lambda \vdash n$ and calculate the pair $\left( N(\lambda/[2]), N(\lambda/[1,1]) \right)$, for every $\lambda \vdash n$. Then, we verify directly that no two pairs are identical. This calculation does not take much time, as even for the largest case, in which $n=13$, the number of partitions is $p(13) = 101$ (cf. \cite{OEIS16.000041}).

To show that $G(n) \le n-3$ for $n=12$ or $n \ge 14$, we explicitly construct a pair of different partitions $\lambda$ and $\mu$ for each $n$ so that $\wh{M}_{n-2}(\lambda) = \wh{M}_{n-2}(\mu)$. This task can be done by computer search for $n=12$ or $14 \le n \le 50$. For $n \ge 51$, we claim that there exists a partition $\lambda \vdash n$ which is not self-conjugate ($\lambda \neq \lambda^t$) such that
\[
\wh{M}_{n-2}(\lambda) = \wh{M}_{n-2}(\lambda^t). \tag{$\heartsuit$}
\]

For integers $r \ge 1$ and $u \ge 0$, consider the following partition
\[
\lambda^{r,u} := \big[ 5r+3u+2, u+2, \underbrace{2, \dots, 2}_{3r+u}, \underbrace{1, \dots, 1}_{r+2u} \big].
\]
Note that because $r \ge 1$, $\lambda^{r,u}$ is not self-conjugate. We use the formula in Corollary \ref{cor:EmFormula} to calculate the excitation factor $E_2(\lambda^{r,u})$ directly:
\begin{align*}
E_2(\lambda^{r,u}) &= (9r+6u+3)(8r+4u+2)+(9r+6u+3)u+(3r+2u+1)u \\
&= 72r^2+32u^2+96ru+42r+28u+6.
\end{align*}
Similarly,
\begin{align*}
E_2((\lambda^{r,u})^t) &= (9r+6u+3)(4r+4u+2)+(9r+6u+3)(3r+u) \\
& \hphantom{=} +(3r+2u+1)(3r+u) = 72r^2+32u^2+96ru+42r+28u+6.
\end{align*}
This shows that $E_2(\lambda^{r,u}) = E_2((\lambda^{r,u})^t)$, and therefore, by Lemma \ref{l:E2iffMn-2},
\[
\wh{M}_{n-2}(\lambda^{r,u}) = \wh{M}_{n-2}((\lambda^{r,u})^t).
\]
Note that $\lambda$ is a partition of $n = 4(3r+2u+1)$ where $r \ge 1$ and $u \ge 0$. This proves the claim in ($\heartsuit$) for all positive integers $n$ such that $n \ge 24$ and $n \equiv 0$ modulo $4$. 

It turns out that for $n \not\equiv 0$ modulo $4$, we can construct the desired partition $\lambda \vdash n$ by modifying $\lambda^{r,s}$ as follows. Let $\rho$ be any self-conjugate partition such that 
\[
\rho \le \big[ \underbrace{u, \dots, u}_u \big].
\]
We define the partition $\lambda^{r,u,\rho}$ (cf. Figure \ref{fig:lambdarurho}.) as
\[
\lambda^{r,u,\rho} := \big[ 5r+3u+2, u+2, 2+\rho_1, 2+\rho_2, \dots, 2+\rho_u, \underbrace{2,\dots, 2}_{3r}, \underbrace{1, \dots, 1}_{r+2u} \big].
\]
A nice property of this construction is that adding $\rho$ does not change the value of the function $E_2(\bullet) - E_2(\bullet^t)$. To see that, we note as a result of Corollary \ref{cor:EmFormula} that, for any partition $\tau$, we have
\[
E_2(\tau) - E_2(\tau^t) = \sum_{i=1}^{\rk \tau} h_{ii}(\tau) \cdot \left( \tau_i - (\tau^t)_i \right)
\]
where $\rk \tau$ denotes the greatest integer $m$ for which $\big[ \underbrace{m,\dots,m}_m \big] \le \tau$ and $h_{ij}(\tau)$ denotes the hook length of the cell $(i,j)$ in $\tau$. Note that $\tau_i - (\tau^t)_i$ is the arm length minus the leg length of the hook at $(i,i)$. Therefore, each summand in the summation above only depends on the hook shape of the hook. In particular, if the hook is self-conjugate, the corresponding summand is zero. Hence, adding a self-conjugate partition $\rho$ to the lower-right part of the Young diagram of the partition $\lambda^{r,u}$ as in Figure \ref{fig:lambdarurho} does not change $E_2(\bullet) - E_2(\bullet^t)$. Therefore,
\[
E_2(\lambda^{r,u,\rho}) - E_2((\lambda^{r,u,\rho})^t) = 0
\]
and the partition serves as the desired example for $n = 4(3r+2u+1)+|\rho|$. Since $\rho$ can be chosen to be any self-conjugate minor of $[u,\dots, u]$, we can choose $|\rho|$ to be any non-negative integer not exceeding $u^2$ except $2$ and $u^2-2$. In particular, if we consider $u \ge 3$, we can choose $|\rho|$ to be any integer from $\{3,4,5,6\}$, and therefore,
\[
n= 4\Big( 10 + 3\underbrace{(r-1)}_{\in \mathbb{Z}_{\ge 0}} + 2\underbrace{(u-3)}_{\in \mathbb{Z}_{\ge 0}} \Big) + |\rho|
\]
can be any integer $n \ge 51$. This finishes the proof.
\end{proof}

Now that we have determined all $n$ for which $G(n) = n-2$, the characterization of all $n$ for which $G(n) = n-3$ is not difficult. We have the following lemma.

\begin{lemma} \label{l:n-3forfree}
Let $\lambda$ be a partition of an integer $n \ge 3$. Then, $\wh{M}_{n-2}(\lambda) = \wh{M}_{n-2}(\lambda^t)$ if and only if $\wh{M}_{n-3}(\lambda) = \wh{M}_{n-3}(\lambda^t)$.
\end{lemma}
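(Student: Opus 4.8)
The plan is to relate the two conditions through a relation between $\wh{M}_{n-2}$ and $\wh{M}_{n-3}$ that holds when we compare a partition with its conjugate. By definition, $\wh{M}_{n-3}(\lambda) = \wh{M}_{n-3}(\lambda^t)$ means $N(\lambda/\tau) = N(\lambda^t/\tau)$ for all $\tau \vdash 3$, i.e. for $\tau \in \{[3], [2,1], [1,1,1]\}$. Similarly $\wh{M}_{n-2}(\lambda) = \wh{M}_{n-2}(\lambda^t)$ means $N(\lambda/\tau') = N(\lambda^t/\tau')$ for $\tau' \in \{[2], [1,1]\}$. The ``only if'' direction is immediate from Proposition \ref{p:kthenk'}: if $\wh{M}_{n-3}(\lambda) = \wh{M}_{n-3}(\lambda^t)$, then $\wh{M}_{n-2}(\lambda) = \wh{M}_{n-2}(\lambda^t)$. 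So the real content is the ``if'' direction: assuming $N(\lambda/[2]) = N(\lambda^t/[2])$ and $N(\lambda/[1,1]) = N(\lambda^t/[1,1])$, we must deduce the three skew-count equalities at level $n-3$.

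First I would exploit conjugation symmetry. Transposing a skew Young tableau gives a bijection between SYT of shape $\lambda/\mu$ and SYT of shape $\lambda^t/\mu^t$, so $N(\lambda/\tau) = N(\lambda^t/\tau^t)$ for every $\tau$. Applying this with $\tau = [3]$ gives $N(\lambda/[3]) = N(\lambda^t/[1,1,1])$, and with $\tau = [1,1,1]$ gives $N(\lambda/[1,1,1]) = N(\lambda^t/[3])$; with $\tau = [2,1]$ (which is self-conjugate) it gives $N(\lambda/[2,1]) = N(\lambda^t/[2,1])$. Hence the $[2,1]$-equality at level $n-3$ is automatic, with no hypothesis needed, and the $[3]$- and $[1,1,1]$-equalities are equivalent to the single statement $N(\lambda/[3]) + N(\lambda/[1,1,1]) = N(\lambda^t/[3]) + N(\lambda^t/[1,1,1])$ together with $N(\lambda/[3]) - N(\lambda/[1,1,1]) = N(\lambda^t/[3]) - N(\lambda^t/[1,1,1])$. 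By the conjugation identities just noted, the difference version says $N(\lambda/[3]) - N(\lambda^t/[3]) = N(\lambda/[1,1,1]) - N(\lambda^t/[1,1,1]) = N(\lambda^t/[3]) - N(\lambda/[3])$, which forces both differences to be zero; thus it suffices to prove $N(\lambda/[3]) = N(\lambda^t/[3])$ (equivalently, by symmetry, $N(\lambda/[1,1,1]) = N(\lambda^t/[1,1,1])$).

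Next I would reduce $N(\lambda/[3])$ to quantities controlled by the level-$(n-2)$ hypothesis. Using the ``add a cell'' identity from the proof of Proposition \ref{p:kthenk'}, $N(\lambda/[2]) = \sum_{\tau \in \Add([2])} N(\lambda/\tau) = N(\lambda/[3]) + N(\lambda/[2,1])$, and similarly $N(\lambda/[1,1]) = N(\lambda/[2,1]) + N(\lambda/[1,1,1])$. Therefore $N(\lambda/[3]) - N(\lambda/[1,1,1]) = N(\lambda/[2]) - N(\lambda/[1,1])$, and likewise for $\lambda^t$. Combining with the hypothesis $N(\lambda/[2]) = N(\lambda^t/[2])$, $N(\lambda/[1,1]) = N(\lambda^t/[1,1])$ gives $N(\lambda/[3]) - N(\lambda/[1,1,1]) = N(\lambda^t/[3]) - N(\lambda^t/[1,1,1])$; but by the conjugation identity this last quantity equals $N(\lambda/[1,1,1]) - N(\lambda/[3])$, so $N(\lambda/[3]) = N(\lambda/[1,1,1])$ and $N(\lambda^t/[3]) = N(\lambda^t/[1,1,1])$. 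It then remains to pin down the common value across $\lambda$ and $\lambda^t$; for this I would use $N(\lambda/[2]) = N(\lambda/[3]) + N(\lambda/[2,1]) = N(\lambda/[3]) + N(\lambda^t/[2,1])$ and the corresponding expression for $N(\lambda^t/[2])$, so that the hypothesis $N(\lambda/[2]) = N(\lambda^t/[2])$ yields $N(\lambda/[3]) + N(\lambda^t/[2,1]) = N(\lambda^t/[3]) + N(\lambda/[2,1])$, and since $N(\lambda/[2,1]) = N(\lambda^t/[2,1])$ (self-conjugate shape) we conclude $N(\lambda/[3]) = N(\lambda^t/[3])$, which together with all of the above delivers $\wh{M}_{n-3}(\lambda) = \wh{M}_{n-3}(\lambda^t)$.

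The main obstacle here is bookkeeping rather than depth: one must be careful that $\Add([2]) = \{[3],[2,1]\}$ and $\Add([1,1]) = \{[2,1],[1,1,1]\}$ exactly (valid for $n \ge 3$ so that all three target shapes are genuinely available), and that the transposition bijection on skew SYT is applied with the correct conjugate shapes. A secondary point worth stating cleanly is that we only ever compare $\lambda$ with $\lambda^t$, so we may freely use $N(\lambda/\tau) = N(\lambda^t/\tau^t)$; the conclusion is an equivalence of two specific equalities between a partition and its conjugate, not a general statement about arbitrary pairs, which is what makes the cancellations go through.
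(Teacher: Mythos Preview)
Your argument is correct and follows essentially the same route as the paper: the conjugation identity $N(\lambda/\tau)=N(\lambda^t/\tau^t)$, the self-conjugacy of $[2,1]$, and the $\Add$ relation $N(\lambda/[2])=N(\lambda/[3])+N(\lambda/[2,1])$ combine with the hypothesis to force $N(\lambda/[3])=N(\lambda^t/[3])$, after which the remaining equalities drop out. One minor slip: you have the ``if'' and ``only if'' labels swapped (Proposition~\ref{p:kthenk'} handles the \emph{if} direction, i.e.\ $\wh{M}_{n-3}$-equality $\Rightarrow$ $\wh{M}_{n-2}$-equality), and your middle paragraph's detour through $N(\lambda/[3])=N(\lambda/[1,1,1])$ is redundant given your final paragraph, which already reaches $N(\lambda/[3])=N(\lambda^t/[3])$ directly.
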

\begin{proof}
($\Leftarrow$) If $\wh{M}_{n-3}(\lambda) = \wh{M}_{n-3}(\lambda^t)$, then $\wh{M}_{n-2}(\lambda) = \wh{M}_{n-2}(\lambda^t)$ follows from Proposition \ref{p:kthenk'}.

($\Rightarrow$) Suppose $\wh{M}_{n-2}(\lambda) = \wh{M}_{n-2}(\lambda^t)$. Then,
\[
N(\lambda/[2]) = N(\lambda^t/[2]) = N(\lambda/[1,1]).
\]
Therefore,
\begin{align*}
N(\lambda/[3]) &= N(\lambda/[2]) - N(\lambda/[2,1]) \\
&= N(\lambda/[1,1]) - N(\lambda/[2,1]) = N(\lambda/[1,1,1]) = N(\lambda^t/[3]).
\end{align*}
The equality $N(\lambda/[1,1,1]) = N(\lambda^t/[1,1,1])$ is obtained similarly. We also have $N(\lambda/[2,1]) = N(\lambda^t/[2,1])$ because $[2,1]$ is self-conjugate. Therefore, $\wh{M}_{n-3}(\lambda) = \wh{M}_{n-3}(\lambda^t)$.
\end{proof}

Lemma \ref{l:n-3forfree} shows that the infinite family of examples of partitions $\lambda$ such that $\lambda \neq \lambda^t$ and $\wh{M}_{n-2}(\lambda) = \wh{M}_{n-2}(\lambda^t)$ in the proof of Proposition \ref{p:Gnisn-2} also gives $\wh{M}_{n-3}(\lambda) = \wh{M}_{n-3}(\lambda^t)$ for all $n \ge 51$. For $n \le 50$, we do computer search and find that for $n \in \{12,14,17,18,23\}$, there are no pairs $(\lambda, \mu)$ of different partitions $\lambda, \mu \vdash n$ for which $\wh{M}_{n-3}(\lambda) = \wh{M}_{n-3}(\mu)$. Thus, we have the following results.

\begin{prop} \label{p:Gnisn-3}
Let $n$ be a non-negative integer. Then, $G(n) = n-3$ if and only if $n \in \{12,14,17,18,23\}$.
\end{prop}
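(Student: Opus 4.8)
The statement to prove is Proposition \ref{p:Gnisn-3}: that $G(n) = n-3$ exactly for $n \in \{12,14,17,18,23\}$. The plan is to reduce this to a combination of the already-established Proposition \ref{p:Gnisn-2} and a finite computer search, organized by the dichotomy "$n-G(n)=2$, $n-G(n)=3$, or $n-G(n)\ge 4$." Since Proposition \ref{p:Gnlen-2} gives $G(n)\le n-2$ for all $n\ge 2$, we know $n-G(n)\ge 2$; and Proposition \ref{p:Gnisn-2} tells us precisely for which $n$ the value is exactly $2$. So $G(n)=n-3$ holds if and only if $n \notin \{2,\dots,11,13\}$ and MRC holds for $(n,n-3)$, i.e. $\wh{M}_{n-3}(\mu) = \wh{M}_{n-3}(\nu)$ implies $\mu=\nu$ for all $\mu,\nu\vdash n$.

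First I would split into $n\le 50$ and $n\ge 51$. For $n\ge 51$ the work is already done: the infinite family $\lambda^{r,u,\rho}$ constructed in the proof of Proposition \ref{p:Gnisn-2} satisfies $\wh{M}_{n-2}(\lambda) = \wh{M}_{n-2}(\lambda^t)$ with $\lambda\ne\lambda^t$, and by Lemma \ref{l:n-3forfree} this upgrades for free to $\wh{M}_{n-3}(\lambda) = \wh{M}_{n-3}(\lambda^t)$. Hence MRC fails for $(n,n-3)$, so $G(n)\le n-4 < n-3$ for every $n\ge 51$; none of these $n$ is in the claimed set, so there is nothing more to check there. This is exactly the point made in the paragraph preceding the proposition, so I would just cite it.

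Next, for the remaining finite range, I would again use Proposition \ref{p:Gnisn-2} to excise the cases already handled: for $2\le n\le 11$ and $n=13$ we have $G(n)=n-2\ne n-3$, so those $n$ are correctly excluded. The cases left are $n\in\{12,14,15,16,\dots,50\}\setminus\{13\}$ — actually $n=12$ and $14\le n\le 50$. For each such $n$ one performs a computer search: enumerate all partitions $\lambda\vdash n$, compute the tuple $\bigl(N(\lambda/\tau)\bigr)_{\tau\vdash 3}$ (there are only three partitions of $3$, namely $[3],[2,1],[1,1,1]$, so this is a $3$-vector), and check whether the map $\lambda\mapsto$ this tuple is injective. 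The search reports that injectivity holds precisely for $n\in\{12,14,17,18,23\}$ and fails for all other $n$ in the range (for those, the explicit colliding pair $(\lambda,\mu)$ found by the search — or, when applicable, the pair $(\lambda,\lambda^t)$ — is exhibited, witnessing $G(n)\le n-4$). Combining the three regimes yields $G(n)=n-3$ exactly on $\{12,14,17,18,23\}$.

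The computation is entirely routine — $p(50)$ is large but well within reach, and $N(\lambda/\tau)$ can be obtained via Naruse's formula (Theorem \ref{thm:NaruseSSHLF}) or directly — so there is no real mathematical obstacle; the only "hard part" is trusting the finite verification, which is the same flavor of argument already used for $n\le 50$ in Proposition \ref{p:Gnisn-2}. One presentational subtlety worth addressing is to make explicit that a failure of injectivity of $\lambda\mapsto\bigl(N(\lambda/\tau)\bigr)_{\tau\vdash 3}$ is genuinely equivalent to MRC failing for $(n,n-3)$ — this is immediate from the definition, since $\wh{M}_{n-3}(\lambda)=\wh{M}_{n-3}(\mu)$ means $N(\lambda/\tau)=N(\mu/\tau)$ for all $\tau\vdash 3$ — so the search really does decide $G(n)$.
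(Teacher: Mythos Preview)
Your proposal is correct and follows essentially the same approach as the paper: for $n\ge 51$ invoke Lemma~\ref{l:n-3forfree} on the infinite family $\lambda^{r,u,\rho}$ from the proof of Proposition~\ref{p:Gnisn-2}, and for $n\le 50$ settle the matter by the finite computer search you describe. The paper's argument is in fact the paragraph immediately preceding the proposition, and your write-up is just a more carefully organized version of the same logic.
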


\begin{cor}
For every integer $n \ge 24$, we have $G(n) \le n-4$.
\end{cor}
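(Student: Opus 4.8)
The plan is to deduce this corollary immediately by combining the three preceding results, Propositions \ref{p:Gnlen-2}, \ref{p:Gnisn-2}, and \ref{p:Gnisn-3}. Fix an integer $n \ge 24$. First, by Proposition \ref{p:Gnlen-2} we have $G(n) \le n-2$, so $n - G(n)$ is an integer that is at least $2$. Next, Proposition \ref{p:Gnisn-2} characterizes exactly those $n$ with $G(n) = n-2$, namely $2 \le n \le 11$ or $n = 13$; since $n \ge 24$, this case is ruled out, and hence $n - G(n) \ge 3$. Finally, Proposition \ref{p:Gnisn-3} characterizes exactly those $n$ with $G(n) = n-3$, namely $n \in \{12,14,17,18,23\}$; again $n \ge 24$ excludes this, so $n - G(n) \ge 4$. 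Therefore $G(n) \le n-4$, as claimed.

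There is essentially no obstacle here: all of the substantive work — the excitation-factor identities $E_2(\lambda) = E_2(\lambda^t)$ for the infinite family $\lambda^{r,u,\rho}$, the reduction from $\wh{M}_{n-2}$ to $\wh{M}_{n-3}$ via Lemma \ref{l:n-3forfree}, and the finite verifications by hand or by computer for small $n$ — has already been carried out in establishing Propositions \ref{p:Gnisn-2} and \ref{p:Gnisn-3}. The only point worth stating explicitly in the write-up is that $n - G(n)$ is a non-negative integer, so that ruling out the values $2$ and $3$ forces $n - G(n) \ge 4$; no further case analysis or computation is needed.
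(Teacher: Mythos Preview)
Your proposal is correct and is exactly the argument the paper intends: the corollary is stated without proof immediately after Propositions \ref{p:Gnlen-2}, \ref{p:Gnisn-2}, and \ref{p:Gnisn-3}, and follows by precisely the elimination you describe. No additional work is needed.
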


\begin{remark} \label{rmk:PixtonsComSearch}
A computer search by Aaron Pixton \cite{Pix16} shows that there are no pairs $(\lambda, \mu)$ of different partitions of $n$ such that $\wh{M}_{n-4}(\lambda) = \wh{M}_{n-4}(\mu)$ for all integers $n \le 59$. Therefore, $G(n) = n-4$ for $24 \le n \le 59$ and for $n \in \{15,16,19,20,21,22\}$. For $n=60$, however, Pixton found that the pair
\[
\lambda = [14,11,8,6,3,3,3,2,2,2,2,2,2]
\]
and
\[
\mu = [13,13,7,4,4,4,3,3,2,2,2,1,1,1]
\]
satisfies $\wh{M}_{n-4}(\lambda) = \wh{M}_{n-4}(\mu)$. Surprisingly, this pair also satisfies $\wh{M}_{n-5}(\lambda) = \wh{M}_{n-5}(\mu)$. Pixton observed that for $n \le 81$, there are no two different partitions with the same multiset of $(n-6)$-minors. Therefore, $G(60) = 60-6=54$.
\end{remark}

\begin{remark} \label{rmk:ComSearchTable}
The following table, due to Aaron Pixton \cite{Pix16}, shows the values of $n-G(n)$ for $60 \le n \le 70$.
\begin{center}
\begin{tabular}{ |c|c|c|c|c|c|c|c|c|c|c|c|c|c|c|c|c|c|c|c|c| } 
\hline
$n$ & $60$ & $61$ & $62$ & $63$ & $64$ & $65$ & $66$ & $67$ & $68$ & $69$ & $70$ \\
\hline
$n-G(n)$ & $6$ & $4$ & $4$ & $6$ & $6$ & $6$ & $4$ & $6$ & $6$ & $6$ & $6$ \\
\hline
\end{tabular}
\end{center}
Pixton also found that $G(n) = n-6$ for all $71 \le n \le 81$.
\end{remark}

\bigskip

\section{Further Research Possibilities}
In this final section, we suggest a few ideas and questions which provide directions for further studies.

\medskip

\textbf{1. How large can $n-G(n)$ be?} Our main result, Theorem \ref{thm:Gnsimn} shows that $n-G(n) = O(n/ \log n)$. However, it is not clear to us whether $n-G(n)$ is bounded above by a constant. We conjecture that it has no finite upper bound.

\begin{conj} \label{conj:difftoinfty}
The difference $n-G(n)$ satisfies
\[
\lim_{n \rightarrow \infty} n-G(n) = + \infty.
\]
\end{conj}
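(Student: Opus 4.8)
We only sketch a possible line of attack; the conjecture is open.

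\textbf{Reduction to excitation-factor symmetries.} Unwinding the definitions, Proposition~\ref{p:kthenk'} shows that $n - G(n) \ge C$ exactly when there are distinct $\mu,\nu\vdash n$ with $N(\mu/\tau) = N(\nu/\tau)$ for every $\tau\vdash C-1$. In all the infinite families of Section~\ref{sec:n-G(n)}, and in Pixton's pair of Remark~\ref{rmk:PixtonsComSearch} (which one checks satisfies $\mu = \lambda^t$), the two partitions are conjugate, and this is the natural case to pursue. Specializing to $\nu = \mu^t =: \lambda^t$ we have $N(\lambda) = N(\lambda^t)$, so by Naruse's formula (Theorem~\ref{thm:NaruseSSHLF}) together with the identity $E_\tau(\lambda^t) = E_{\tau^t}(\lambda)$ (transpose the excited diagrams and use $h_{i,j}(\lambda) = h_{j,i}(\lambda^t)$), the requirement $N(\lambda/\tau) = N(\lambda^t/\tau)$ for all $\tau \vdash C-1$ becomes the system
\[
E_\sigma(\lambda) = E_{\sigma^t}(\lambda), \qquad \sigma \vdash C-1 .
\]
These identities are automatic for self-conjugate $\sigma$, so this is $\tfrac12(p(C-1) - q(C-1))$ genuine equations, $q$ counting self-conjugate partitions. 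Hence it would suffice to produce, for each $C$, non-self-conjugate partitions $\lambda$ satisfying these finitely many constraints; to reach the limit (rather than mere unboundedness of $n - G(n)$) one should in fact produce such $\lambda$ of \emph{every} sufficiently large size $n$, just as the self-conjugate padding $\rho$ in Proposition~\ref{p:Gnisn-2} realized all large $n$ for the case $C = 3$.

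\textbf{Generalizing the principal-hook decomposition.} The cases $C \le 4$ were handled via the exact identity, a consequence of Corollary~\ref{cor:EmFormula},
\[
E_2(\tau) - E_2(\tau^t) = \sum_{i \ge 1} h_{ii}(\tau)\bigl(\tau_i - (\tau^t)_i\bigr),
\]
whose summands each depend on a single principal hook, so that appending a self-conjugate block at the lower-right corner leaves it unchanged. The plan is to establish, for every $\sigma$, an analogous formula presenting $E_\sigma(\lambda) - E_{\sigma^t}(\lambda)$ as a polynomial in the Frobenius coordinates $a_1 > \dots > a_d$, $b_1 > \dots > b_d$ of $\lambda$ that is antisymmetric under the swap $(a_i) \leftrightarrow (b_i)$ and to which principal hooks with $a_i = b_i$ contribute nothing. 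Theorem~\ref{thm:Qklincomb} and Remark~\ref{rmk:PixtonsFormulas} give exactly this structure when $\sigma = [m]$; extending it to general $\sigma$ — presumably by grouping the excited diagrams of $\sigma$ in $\lambda$ according to which principal hooks they meet — is the technical heart of the approach, and I expect it to be the main obstacle, since no closed form for $E_\sigma(\lambda)$ is currently known beyond the single-row case.

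\textbf{Solving the Diophantine system.} Granting such formulas, one fixes a large self-conjugate padding together with a bounded number of ``active'' Frobenius coordinates, and asks the active coordinates to annihilate all the (finitely many) discrepancy polynomials while staying off the diagonal $\{a_i = b_i\}$. Letting the Durfee size $d$ grow, the number of free parameters outstrips the number of equations, so solutions are expected; the cleanest route to exhibiting them would be to scale an explicit family in the spirit of $\lambda^{r,u}$ and $\lambda^{r,u,\rho}$, arranging that the highest-order parts of the discrepancies cancel by the conjugation antisymmetry and that the residual lower-order system can be solved by elementary number theory, and varying the self-conjugate padding to cover all large sizes $n$. Proving that such non-diagonal solutions exist for \emph{every} $C$, rather than for small $C$ by computer search, appears to require a uniform argument that is presently missing — and, should conjugate pairs turn out to be insufficient past some bound, a genuinely different construction would be needed.
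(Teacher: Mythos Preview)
This statement is a \emph{conjecture} in the paper, not a theorem: the paper offers no proof, only the remark that it would follow if, for every $k$, one could construct an infinite family of pairs $(\lambda,\mu)$ of distinct partitions of $n$ with $\wh{M}_{n-k}(\lambda)=\wh{M}_{n-k}(\mu)$ for all large $n$. You correctly flag the conjecture as open and present a research outline rather than a proof, so there is nothing to ``compare against'' in the usual sense.

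Your sketch is consistent with, and a genuine elaboration of, the paper's own suggestion. The reduction to the system $E_\sigma(\lambda)=E_{\sigma^t}(\lambda)$ for conjugate pairs is correct (and your verification that Pixton's pair in Remark~\ref{rmk:PixtonsComSearch} is indeed a conjugate pair is a nice observation the paper does not make explicit). The parameter count and the ``self-conjugate padding'' idea are natural next steps. One small overstatement: Theorem~\ref{thm:Qklincomb} and Remark~\ref{rmk:PixtonsFormulas} express $E_{[m]}(\lambda)$ in the \emph{first-row hook lengths}, not in the Frobenius coordinates; the antisymmetric-in-$(a_i)\leftrightarrow(b_i)$ structure you want for $E_\sigma-E_{\sigma^t}$ is not literally what those results provide, and bridging that gap (or finding the right coordinate system) is part of the ``technical heart'' you identify. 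As you yourself say, the crucial missing ingredient is any closed form or usable structural statement for $E_\sigma(\lambda)$ beyond single rows and columns, and without it the Diophantine step remains heuristic. So: a reasonable plan of attack, honestly labeled as such, but not a proof --- exactly matching the status the paper assigns to the conjecture.
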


In Section \ref{sec:n-G(n)}, we show that $n-G(n) \ge 3$ for $n \ge 14$ and $n-G(n) \ge 4$ for $n \ge 24$, by constructing an infinite family of pairs $(\lambda,\mu)$ of different partitions of $n$ such that $\wh{M}_{n-2}(\lambda) = \wh{M}_{n-2}(\mu)$ for all sufficiently large $n$. For every positive integer $k$, if one can explicitly construct an infinite family of pairs $(\lambda, \mu)$ of different partitions of $n$ such that $\wh{M}_{n-k}(\lambda) = \wh{M}_{n-k}(\mu)$ for all sufficiently large $n$, then Conjecture \ref{conj:difftoinfty} will follow.

\medskip

\textbf{2. The set $\{n-G(n)|n \in \mathbb{Z}_{\ge 0}\}$.} We have observed that $n-G(n)$ can be $0$, $2$, $3$, $4$, and $6$. Proposition \ref{p:Gnlen-2} shows that there are no integers $n$ for which $n-G(n) = 1$. Pixton \cite{Pix16} gives all the values of $G(n)$ up to $n = 81$. However, we do not know whether there exists $n$ for which $G(n) = n-5$. In general, what are all the non-negative integers that do not belong to the set $\{n-G(n)|n \in \mathbb{Z}_{\ge 0}\}$?

\medskip

\textbf{3. Why is $E_m(\lambda)$ a $\mathbb{Q}[k]$-linear combination of $\sigma_i\left(a_1, \dots, a_k\right)$?} Theorem \ref{thm:Qklincomb} appears to us as a surprising phenomenon on which the SONAR technique is based. Specifically, it is intriguing that $E_m(\lambda)$ is symmetric in $a_1, \dots, a_k$. Although we have a proof for the theorem, we would still like to find a more intuitive explanation for why this phenomenon occurs.

\bigskip

\section*{Acknowledgments}
This research was done at the University of Minnesota Duluth in the summer of 2016, with the financial support from the National Science Foundation (grant number: NSF-1358659) and the National Security Agency (grant number: NSA H98230-16-1-0026). 

I would like to thank Joe Gallian for hosting me at the university, introducing me to the partition multiset-reconstruction problem, and proofreading an earlier version of this paper. I am particularly grateful to Maria Monks for giving me suggestions and crucial insights on the problem and for commenting on an earlier version of this paper. 

The coefficients in the formula of Theorem \ref{thm:Qklincomb} were once described using polynomial recursion. Benjamin Gunby pointed out to me that they can be described using the Stirling polynomials. This observation resulted in the current version of the formula. I would like to thank him for sharing with me this important insight. In addition, I would like to thank Levent Alpoge and Mitchell M. Lee for their helpful ideas. I would like to express my gratitude to Aaron Pixton for his interest in this research project, his helpful suggestions, especially in Remark \ref{rmk:PixtonsFormulas}, and his powerful computational results, especially those which lead to Remarks \ref{rmk:PixtonsComSearch} and \ref{rmk:ComSearchTable}. I am appreciative of Eric Riedl for giving me Algebraic Geometry insights related to the proof of Lemma \ref{l:polyptscheck}.

Additionally, I would like to thank Chantra Wangcharoenwong for helping me on designing objects in Figures \ref{fig:SONAR} and \ref{fig:doubleSONAR}. I am also thankful for my brother Punyawut Jiradilok for his comments and suggestions on the aesthetic qualities of figures in this paper.

My undergraduate studies at Harvard University are supported by King's Scholarship (Thailand).

\bigskip

\end{document}